 \newtheorem{thm}{Theorem}[section]
 \newtheorem{cor}[thm]{Corollary}
 \newtheorem{lem}[thm]{Lemma}
 \newtheorem{prop}[thm]{Proposition}
 \theoremstyle{definition}
 \newtheorem{rem}[thm]{Remark}
 \newtheorem{rems}[thm]{Remarks}
 \newtheorem{exa}[thm]{Example}
 \numberwithin{equation}{section}
\newcommand{\spec}{\usigma}
\newcommand{\Aspec}{\usigma_{\mathrm{ap}}}
\newcommand{\Pspec}{\usigma_{\mathrm{p}}}
\newcommand{\resol}{\urho}
\def\BMb{\calM_\mathrm{b}}
\def\Ee{\mathrm{E}}
\def\prO{\uOmega}
\newcommand{\vanish}[1]{\relax}
\newcommand{\beq}{\begin{equation}}
\newcommand{\eeq}{\end{equation}}
\newcommand{\defiff}{\stackrel{\text{\rm def}}{\Longleftrightarrow}}
\newcommand{\Ball}{\mathrm{B}}
\newcommand{\hs}{\hskip-0.1em}
\newcommand{\set}[1]{\hs\left[\,#1\,\right]}
\newcommand{\Clo}{\mathcal{C}}
\newcommand{\prfnoi}{\smallskip\noindent}
\newcommand{\emdf}{\bf}
\newcommand{\Id}{\mathrm{I}}
\newcommand{\suchthat}{\,\,|\,\,}
\newcommand{\N}{\mathbb{N}}
\newcommand{\R}{\mathbb{R}}
\newcommand{\C}{\mathbb{C}}
\newcommand{\K}{\mathbb{K}}
\newcommand{\eM}{\mathrm{M}}
\newcommand{\res}[1]{|_{#1}}
\newcommand{\Sum}[2][\relax]{%
 \ifx#1\relax \sideset{}{_{#2}}\sum 
 \else \sideset{}{^{#1}_{#2}}\sum
 \fi}
\newcommand{\car}{\mathbf{1}}
\newcommand{\konj}[1]{\overline{#1}}
\newcommand{\abs}[1]{\vert #1 \vert}
\DeclareMathOperator{\essran}{essran}
\newcommand{\Ce}{\mathrm{C}}
\newcommand{\Cc}{\mathrm{C}_\mathrm{c}}
\newcommand{\Ell}[1]{\mathrm{L}_{#1}}
\newcommand{\Meas}{\mathcal{M}}
\DeclareMathOperator{\bdd}{bdd}
\newcommand{\ohne}{\setminus}
\newcommand{\leer}{\emptyset}
\newcommand{\Pot}[1]{\mathfrak{P}\left(#1\right)}
\newcommand{\dann}{\Rightarrow}
\newcommand{\Dann}{\Longrightarrow}
\newcommand{\gdw}{\Leftrightarrow}
\newcommand{\nach}{\circ}
\DeclareMathOperator{\dom}{dom}
\DeclareMathOperator{\ran}{ran}
\newcommand{\cls}[1]{\overline{#1}}
\DeclareMathOperator{\supp}{supp}
\newcommand{\spann}{\mathrm{span}}
\newcommand{\tensor}{\otimes}
\newcommand{\BL}{\mathcal{L}}
\newcommand{\norm}[2][\relax]{%
   \ifx#1\relax \ensuremath{\lVert#2\rVert}
   \else \ensuremath{\left\Vert#2\right\Vert_{#1}}
   \fi}
\newcommand{\sprod}[2]{\ensuremath{%
  \setbox0=\hbox{\ensuremath{#2}}
  \dimen@\ht0
  \advance\dimen@ by \dp0
  \left[ #1\rule[-\dp0]{0pt}{\dimen@}, #2\hspace{1pt}\right]}}
\newcommand{\bsprod}[2]{\ensuremath{%
  \setbox0=\hbox{\ensuremath{#2}}
  \dimen@\ht0
  \advance\dimen@ by \dp0
  \bigl[ #1\rule[-\dp0]{0pt}{\dimen@}, #2\hspace{1pt}\bigr]}}
\newcommand{\dprod}[2]{\ensuremath{\langle#1,#2\rangle}}
\newcommand{\Borel}{\mathrm{Bo}}  
\newcommand{\Baire}{\mathrm{Ba}}
\begin{document}

\title{The Functional Calculus Approach to the Spectral Theorem}

\author[Markus Haase]{Markus Haase}

\address{%
Kiel University\\
Mathematisches Seminar\\
Ludewig-Meyn-Str.4\\
24118 Kiel, Germany}

\ead{haase@math.uni-kiel.de}



\begin{keyword}
Spectral theorem \sep measurable functional calculus 
\sep Fuglede's theorem

\MSC[2000] 47B15 \sep 46A60

\end{keyword}

\date{\today}

\begin{abstract}
A consistent functional calculus approach to the spectral theorem
for strongly commuting normal operators on Hilbert spaces is
presented. In contrast to the common approaches using
projection-valued measures or multiplication operators, 
here the functional calculus is not treated as a subordinate but as the 
central concept. 

Based on five simple axioms for a ``measurable functional
calculus'', the theory of such calculi is developed in detail, 
including spectral theory, uniqueness results and construction
principles. Finally, the functional calculus form
of the spectral theorem is stated and proved, with some
proof variants being discussed.  
\end{abstract}

\maketitle

\section{Introduction}\label{s.intro}

The spectral theorem (for normal or self-adjoint operators on a
Hilbert space) is certainly one of the most important results of
20th century mathematics. It comes in different forms, two of which
are the most widely used: the multiplication operator (MO) form 
and the one using projection-valued measures (PVMs). 
Associated with this variety is a discussion about
``What does the spectral theorem say?''(Halmos \cite{Halmos1963}), where the pro's and con's
of the different approaches are compared.

In this article, we would like to add a slightly 
different stance to this debate by  advocating
a consistent {\em functional calculus approach} 
to the spectral theorem. Since in any
exposition of the spectral theorem one also will
find results about functional calculus, some
words of explanation are in order.

\medskip
Let us start with the observation that whereas
multiplication operators and projection-valued
measures are well-defined mathematical objects,
the concept of a functional calculus
as used in the literature on the
spectral theorem is usually 
defined only implicitly.  One speaks
of {\em the}  functional calculus of a normal
operator (that is, the mapping whose
properties are listed in some particular theorem) 
rather than
of a functional calculus as an abstract concept.
As a result, such a concept 
remains a heuristic one,
and the concrete calculus 
associated with the  spectral theorem 
acquires and retains a subordinate status, being merely a derivate of the ``main'' formulations by
multiplication operators or projection--valued measures. 
(At this point, we should emphasize that 
we have the {\em full} functional calculus in mind,
comprising all measurable functions and not
just bounded ones.)
In practice, this expositional
dependence implies  that when using the functional
calculus (and one wants to use it all the time)
one always has to resort  
to one of its constructions.

In this respect,
the multiplication
operator version appears to have a slight advantage, since
deriving functional calculus properties
from facts about multiplication operators is
comparatively simple. (This is probably the reason
why eminent voices such as  Halmos \cite{Halmos1963} and
Reed--Simon \cite[VII]{ReedSimon1}  prefer multiplication
operators.)
However, this advantage is only virtual, since
the MO-version has two major drawbacks. Firstly,  a MO-representation
is not canonical and hence leads to the problem  whether functional calculus
constructions (square root, semigroup, logarithm etc.) are
independent of the chosen MO-representation. 
Secondly (and somehow related to
the first), the MO-version is hardly useful  for anything other  than
for deriving functional calculus properties. (For example,
it cannot be used in constructions, like that of
a joint (product) functional calculus.)

In contrast, an associated PVM is canonical and PVMs are
very good for  constructions,   but the description
of the functional calculus, in particular for unbounded functions, 
is cumbersome. And since one needs the functional calculus eventually,
every construction based on PVMs has, in order to be
useful, to be backed up by results about the functional
calculus associated with the new PVM.

\medskip
With the  present article we propose  
a ``third way'' of treating the spectral theorem, avoiding
the drawbacks of either one of the other approaches. 
Instead of treating the functional calculus as a
logically subordinate concept, we put it center stage
and make it our main protagonist.  Based on  
an axiomatic definition 
of a ``measurable functional calculus'', we
shall present a thorough development of the associated
theory entailing, in particular:
\begin{itemize}
\item general properties, 
constructions such as a pull-back and a push-forward
calculus  (Section \ref{s.mfc});

\item projection-valued measures, 
the role of null sets, 
the concepts of concentration and support
(Section \ref{s.pvm});

\item spectral theory (Section \ref{s.spc});

\item uniqueness (and commutativity) properties 
(Section \ref{s.uni});

\item construction principles (Section \ref{s.con}).
\end{itemize}
Finally, in Section \ref{s.spt}, we
state and prove ``our'' version of the spectral theorem,
which takes the following simple form (see Theorem \ref{spt.t.unb}).

\medskip
\noindent
{\bf Spectral Theorem:}\  
{\em Let $A_1, \dots, A_d$ be pairwise strongly commuting normal operators
on a Hilbert space $H$. Then there is
a unique Borel calculus $(\Phi, H)$ on
$\C^d$ such that $\Phi(\bfz_j) = A_j$
for all $j=1, \dots, d$.}  

\medskip
Here, we use a notion of strong commutativity which is formally 
different from that used by Schmüdgen in \cite{SchmuedgenUOH},
but is more suitable for our approach. 
In a final section we then show that both notions are 
equivalent.

\medskip
In order to advertise our approach,
let us point out some of its ``special features''. 
Firstly, the axioms for a measurable calculus are few and simple, 
and hence easy to verify. Restricted to bounded functions
they are just what one expects, but the main point is
that these axioms work for all measurable functions.

Secondly, the aforementioned axioms are
complete in the sense that each functional calculus property
which can be derived with the help of a MO-representation can
also be derived directly, and practically with the same effort,
 from the axioms.\footnote{Actually, with {\em less} effort,
since one saves the work for establishing the MO-representation,
Theorem \ref{spc.t.mult-rep}.}
This is of course not
a rigorous (meta)mathematical theorem, but a heuristic statement
stipulated by 
the exhaustive exposition we give. In particular, we demonstrate
that  many properties of multiplication operators (for example its
spectral properties) are 
consequences of the general theory, simply because
the multiplication operator calculus
satisfies the axioms\ of a measurable calculus (Theorem \ref{mfc.t.mul} and 
Corollary \ref{spc.c.mult}). 

Thirdly, 
the abstract functional calculus approach leads 
to a simple method for extending a calculus
from bounded to unbounded measurable functions (Theorem
\ref{con.t.ext-bdd}). This method, known as ``algebraic extension'' or
``extension by (multiplicative) regularization'',
is well-established  in general functional calculus theory for
unbounded operators such as sectorial operators or semigroup generators.
(See \cite{HaaseFC,Haase2020pre} and the references therein.) 
It has the enormous advantage that it is elegant and perspicuous,
and that it avoids cumbersome arguments with domains of operators,
which are omnipresent in the PVM-approach (cf.{ }Rudin's exposition
in \cite{RudinFA}).

\subsection*{Notation and Terminology}

We shall work generically over the scalar field $\K \in \{\R, \C\}$.
The letters $H,K$ usually denote Hilbert spaces,
the space of bounded linear operators from $H$ to $K$ is denoted by
$\BL(H;K)$, and $\BL(H)$ if $H= K$.

A (closed) linear subspace of $H\oplus K$ is called a (closed) 
{\em linear relation}. Linear relations
are called {\em multi-valued operators} in \cite[Appendix A]{HaaseFC}, and we use
freely the definitions
and results from that reference. In particular, we say
that a bounded operator $T \in \BL(H)$ {\em commutes} with
a linear relation $A$ if $TA \subseteq AT$, which is equivalent
to 
\[  (x,y) \in A \dann (Tx, Ty) \in A.
\]
A linear relation is called an {\em operator} if it is functional,
i.e., it satisfies
\[  (x,y),\,(x,z)\in A \quad \dann\quad y=z.
\]
The {\em set of all closed linear operators} is 
\[ \Clo(H;K) := \{ A\subseteq H \oplus K \suchthat \text{$A$ is
closed and an operator}\},
\]
with $\Clo(H):= \Clo(H;H)$.

For the spectral theory of linear relations, we refer to
\cite[Appendix A]{HaaseFC}.
For a closed linear relation $A$ in $H$ we  denote by $\spec(A), 
\Pspec(A), \Aspec(A), \resol(A)$ the {\em spectrum, point spectrum,
approximate point spectrum} and {\em resolvent set}, respectively. The
{\em resolvent} of $A$ at $\lambda\in \resol(A)$ is
\[ R(\lambda, A) := (\lambda\Id - A)^{-1}.
\]

A {\em measurable space} is a pair $(X,\Sigma)$, where
$X$ is a set and  $\Sigma$ is a  $\sigma$-algebra of subsets of $X$.
A function $f: X \to \K$ is said to be {\em measurable} if it is
$\Sigma$-to-Borel measurable in the sense of measure theory. We abbreviate
\begin{align*} \Meas(X,\Sigma) & := \{ f: X \to \K \suchthat \text{$f$ measurable}\}
\quad\text{and}\\
\BMb(X,\Sigma)   & := \{ f\in \Meas(X,\Sigma) \suchthat \text{$f$
  bounded}\}.
\end{align*}
These sets are unital algebras with respect to the  pointwise
operations and unit element $\car$ (the function which is constantly
equal to $1$). 

Note that  $\BMb(X,\Sigma)$ is closed under {\em bp-convergence}, by which we
mean that if a sequence $(f_n)_n$ in $\BMb(X,\Sigma)$ converges
{\em b}oundedly (i.e., with $\sup_n \norm{f_n}_\infty <
\infty$) and {\em p}ointwise to a function $f$, then $f\in \BMb(X,\Sigma)$ as well. 

If the $\sigma$-algebra $\Sigma$
is understood, we simply write $\Meas(X)$ and $\BMb(X)$. 
If $X$ is a separable metric space, then by default we take $\Sigma=\Borel(X)$,
the {\em Borel} $\sigma$-algebra on $X$ generated by the family of
open subsets (equivalently: closed subsets, open/closed balls).

Let $(\Omega, \calF, \mu)$ be a measure space. A {\em null set} 
is any subset $A\subseteq \Omega$ such that there is $N \in \calF$
with $A \subseteq N$ and $\mu(N) = 0$.  A mapping $a : \dom(a) \to
X$, where $\dom(a) \subseteq \Omega$ and $(X, \Sigma)$ is any measurable set, 
is called {\em almost everywhere defined} if $\Omega \ohne \dom(a)$ 
is a null set. And it is 
called {\em essentially measurable}, if it is almost everywhere defined
and there is a measurable function $b: \Omega\to X$ such that
$\{x\in \dom(a) \suchthat a(x) \neq b(x)\}$ is a null set.

\section{Measurable Functional Calculus --- Definition and Basic Properties}\label{s.mfc}

\medskip

\subsection{Definition}\label{mfc.s.def}

A {\emdf measurable (functional) calculus}  
on a measurable space $(X,\Sigma)$ is a pair
$(\Phi, H)$ where $H$ is a Hilbert space and 
\[ \Phi: \Meas(X,\Sigma)\to \Clo(H)
\]
is a mapping with the following properties ($f,\: g \in \Meas(X,\Sigma),\: \lambda \in \K$):
\begin{aufziii}
\item[\quad(MFC1)] $\Phi(\car) = \Id$;
\item[\quad(MFC2)] $\Phi(f) + \Phi(g) \subseteq \Phi(f+g)$ and 
$\lambda \Phi(f) \subseteq \Phi(\lambda f)$;
\item[\quad (MFC3)] $\Phi(f)\Phi(g) \subseteq \Phi(fg)$\quad  and 
\[ \dom(\Phi(f)\Phi(g)) = \dom(\Phi(g))\cap \dom(\Phi(fg));
\]
\item[\quad(MFC4)] $\Phi(f) \in \BL(H)$  and $\Phi(f)^* =
  \Phi(\konj{f})$  if $f$ is bounded\footnote{If $\K = \R$ then 
    $\konj{f} = f$ for all $f\in \Meas(X, \Sigma)$.};
\item[\quad(MFC5)]  If $f_n \to f$ pointwise and boundedly, then
  $\Phi(f_n) \to \Phi(f)$ weakly.  
\end{aufziii}
Property (MFC5) is called the {\emdf weak bp-continuity} of the mapping
$\Phi$. We shall see below, that a measurable functional
calculus is actually (strongly) bp-continuous, i.e., 
one can replace ``weakly'' by ``strongly'' in (MFC5).
(See Theorem \ref{mfc.t.prop1}.f below.)

Given a measurable functional calculus $(\Phi,H)$ we denote by
\[   \bdd(\Phi) := \{ f\in \Meas(X, \Sigma) \suchthat \Phi(f) \in
\BL(H)\}
\]
the set of {\emdf $\Phi$-bounded elements}.

\medskip

\subsection{First Properties}\label{mfc.s.prop1}

In the following, we shall explore and comment on the axioms. 
First of all, (MFC1)--(MFC3) simply say that  a measurable
functional calculus is a proto-calculus  in the terminology of
\cite{Haase2020pre}. As a consequence, a measurable functional
calculus has the properties of every proto-calculus. These account
for a)--c) of the following theorem.


\begin{thm}\label{mfc.t.prop1}
Let $\Phi: \Meas(X, \Sigma) \to \Clo(H)$ be a 
measurable functional calculus. Then the following assertions hold 
($f_n,\: f,\:g \in \Meas(X, \Sigma)$, $\lambda \in \C$):
\begin{aufzi}
\item If $\lambda \neq 0$ or $\Phi(f) \in \BL(H)$ then $\Phi(\lambda f) = \lambda \Phi(f)$.
\item If $\Phi(g)\in \BL(H)$ then
\[  \Phi(f) + \Phi(g) = \Phi(f+g)\quad \text{and}\quad \Phi(f)\Phi(g)
= \Phi(fg).
\]
Moreover, $\Phi(g)\Phi(f) \subseteq \Phi(f) \Phi(g)$, i.e., 
$\Phi(f)$ commutes with $\Phi(g)$.

\item If $f\neq 0$ everywhere then $\Phi(f)$ is injective and
  $\Phi(f)^{-1} = \Phi(f^{-1})$.  

\item $\Phi(f)$ is densely defined.

\item If $f$ is bounded, then $\norm{\Phi(f)} \le \norm{f}_\infty$.

\item {\rm (MFC5')}\quad  $\Phi$ is bp-continuous, i.e.:
if $f_n \to f$ pointwise and boundedly, then
$\Phi(f_n) \to \Phi(f)$ strongly. 
\end{aufzi}
\end{thm}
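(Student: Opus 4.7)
As the paper notes, parts (a), (b), (c) are already properties of any proto-calculus, and so are immediate consequences of (MFC1)--(MFC3). I therefore focus on (d), (e), (f), each of which needs one of the two additional axioms (MFC4) and (MFC5) in an essential way. The strategy is entirely algebraic, controlling unbounded behaviour by cutting $f$ off to bounded functions via indicator functions.

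For (d), the plan is to introduce the truncation cut-offs $e_n \defeq \car_{\{|f| \le n\}} \in \BMb(X,\Sigma)$. Since $e_n$ is a real-valued idempotent, axiom (MFC4) together with (b) makes $\Phi(e_n)$ a bounded self-adjoint idempotent, hence an orthogonal projection. The function $f e_n$ is bounded, so by (MFC4) $\Phi(f e_n) \in \BL(H)$ and $\dom \Phi(f e_n) = H$. Now the key point is to use the \emph{equality} (not just inclusion) in (MFC3): with $g = e_n$ it gives
\[
\dom(\Phi(f)\Phi(e_n)) \,=\, \dom \Phi(e_n) \,\cap\, \dom \Phi(f e_n) \,=\, H,
\]
which is precisely the statement that $\Phi(e_n) H \subseteq \dom \Phi(f)$. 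Since $e_n \to \car$ pointwise and boundedly, (MFC5) yields $\Phi(e_n) \to \Id$ weakly, so every $x \in H$ is a weak limit of vectors in the linear subspace $\dom \Phi(f)$; as the weak and norm closures of a linear subspace coincide, this establishes density.

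For (e), the trick is to extract a non-negative ``defect''. Setting $h \defeq \sqrt{\norm{f}_\infty^2 - \konj{f} f} \in \BMb(X,\Sigma)$, one has $h \ge 0$ pointwise and $h^2 = \norm{f}_\infty^2 \car - \konj{f} f$. Applying (a) and (b) (everything in sight is bounded) together with (MFC4) to compute $\Phi(h^2)$ gives $\Phi(h)^2 = \norm{f}_\infty^2 \Id - \Phi(f)^* \Phi(f)$. Because $h$ is real, (MFC4) also yields $\Phi(h) = \Phi(h)^*$, so the left-hand side equals $\Phi(h)^* \Phi(h) \ge 0$. Rearranging gives $\Phi(f)^*\Phi(f) \le \norm{f}_\infty^2 \Id$, which is the desired norm bound.

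For (f), the plan is to upgrade weak to strong convergence by a C*-style polarisation. For $x \in H$, using (b) and (MFC4),
\[
\norm{\Phi(f_n)x - \Phi(f)x}^2 \,=\, \dprod{\Phi(f_n-f)^*\Phi(f_n-f)x}{x} \,=\, \dprod{\Phi(\mabs{f_n-f}^2)x}{x}.
\]
Since $\mabs{f_n - f}^2 \to 0$ pointwise and boundedly, (MFC5) gives $\Phi(\mabs{f_n-f}^2) \to 0$ weakly, so the right-hand side tends to $0$, and strong convergence follows. The only subtle point throughout is the domain bookkeeping in (d): without the equality in (MFC3) one would only know $\Phi(f)\Phi(e_n)$ is a sub-operator of the bounded operator $\Phi(f e_n)$, not that $\Phi(e_n)$ lands inside $\dom \Phi(f)$, so this equality is really what drives the density result.
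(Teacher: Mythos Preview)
Your arguments are all correct, and for (e) your proof is essentially identical to the paper's (you just skip the normalization to $\|f\|_\infty \le 1$). The interesting differences are in (d) and (f).

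For (d), the paper takes a different route that avoids (MFC5) altogether: it uses the single regularizer $e = (1+|f|)^{-1}$, observes that $\Phi(e)$ is bounded, self-adjoint (by (MFC4)) and injective (by (c), since $e \neq 0$ everywhere), hence has dense range; and since $ef$ is bounded, $\Phi(f)\Phi(e) = \Phi(ef) \in \BL(H)$ forces $\ran \Phi(e) \subseteq \dom \Phi(f)$. Your argument via $e_n = \car_{\{|f|\le n\}}$ and weak closure is perfectly valid, but it spends (MFC5) where it is not needed; the paper in fact remarks explicitly after the proof that only (f) rests on (MFC5). So your opening sentence, that each of (d), (e), (f) needs one of (MFC4)/(MFC5) ``in an essential way'', is slightly off for (d): only (MFC4) is essential there.

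For (f), your route is actually a bit more direct than the paper's. The paper argues in two steps: $\Phi(f_n)x \to \Phi(f)x$ weakly by (MFC5), and $\|\Phi(f_n)x\|^2 = \langle \Phi(|f_n|^2)x,x\rangle \to \langle \Phi(|f|^2)x,x\rangle = \|\Phi(f)x\|^2$, then invokes the standard fact that weak convergence plus convergence of norms gives norm convergence. Your computation $\|\Phi(f_n)x - \Phi(f)x\|^2 = \langle \Phi(|f_n-f|^2)x,x\rangle \to 0$ gets there in one stroke.
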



\begin{proof}
Assertion a) and the first part of b) are straightforward consequences
of the axioms (MFC1)--(MFC3) for a proto-calculus, see 
\cite[Thm.{ }2.1.]{Haase2020pre}. The second assertion of b)
follows since $fg = gf$, and hence
\[ \Phi(g)\Phi(f)\subseteq \Phi(gf) = \Phi(fg) = \Phi(f)\Phi(g).
\]
For c) note that if $f\neq 0$ everywhere then $g := 1/f$ satisfies $fg
= \car$, and hence also c) follows from 
general properties of proto-calculi, cf.{ }\cite[Thm.{ }2.1]{Haase2020pre}.

\prfnoi
d) Let $e := (1 + \abs{f})^{-1}$. Then $e$ is bounded and
real-valued, and hence $\Phi(e)$ is self-adjoint by (MFC4). 
By c), $\Phi(e)$ is injective, and hence $\Phi(e)$ has dense
range. But $ef$ is bounded and hence, by b), 
$\Phi(f) \Phi(e) = \Phi(ef)$ is bounded. It follows that 
$\Phi(e)$ maps $H$ into $\dom(\Phi(f))$.

\prfnoi
e)\  This follows from (MFC4) and b)  by a standard argument, which we give for the convenience of the reader. 
Let $f \in \Meas(X,\Sigma)$ with $\abs{f} \le 1$. 
Then, with $g := (1 - \abs{f}^2)^{\frac{1}{2}}$, 
\[  \dprod{(\Id - \Phi(\abs{f}^2))x}{x} =
\dprod{ \Phi(g^2)x}{x} = \dprod{\Phi(g)^*\Phi(g)x}{x} = \norm{\Phi(g)x}^2\ge 0
\]
and hence 
\[ \norm{\Phi(f)x}^2 = \dprod{\Phi(f)^*\Phi(f)x}{x} = 
\dprod{\Phi(\abs{f}^2)x}{x}  \le \dprod{x}{x}= \norm{x}^2
\]
for each $x\in H$. 

\prfnoi
f)\ Suppose that $f_n \to f$ pointwise and boundedly
and let $x \in H$. 
Then, by (MFC5), $\Phi(f_n)x\to \Phi(f)x$ weakly. 
Furthermore,
\[ \norm{\Phi(f_n)x}^2
= \dprod{\Phi(f_n)^*\Phi(f_n)x}{x}
= \dprod{\Phi(\abs{f_n}^2)x}{x} 
\to \dprod{\Phi(\abs{f}^2)x}{x} = \norm{\Phi(f)x}^2,
\]
because  also $\abs{f_n}^2 \to \abs{f}^2$ pointwise and
boundedly. By a standard fact from Hilbert space theory
\cite[Lemma D.18]{EFHN}, it follows that 
$\Phi(f_n)x \to \Phi(f)x$ in norm.
\end{proof}

\begin{rem}
Revisiting the previous proof we see that
a)--c) rest exclusively  on (MFC1)--(MFC3), and 
only f) rests on (MFC5).
\end{rem}

Let us derive some immediate consequences.

\begin{cor}\label{mfc.c.prop1}
Let $\Phi: \Meas(X, \Sigma) \to \Clo(H)$ be a  measurable functional
calculus. Then the following assertions hold 
($f,\:g,\:h \in \Meas(X, \Sigma)$):
\begin{aufzi}
\item If $\abs{f} \le \abs{g}$ then $\dom(\Phi(g)) \subseteq 
\dom(\Phi(f))$ and 
\[  \norm{\Phi(f)x} \le \norm{\Phi(g)x} \qquad (x\in \dom(\Phi(g))).
\]
\item $\dom(\Phi(f)) = \dom(\Phi(\abs{f}))$ and $\norm{\Phi(f)x} =
  \norm{\Phi(\abs{f})x}$ for all $x\in \dom(\Phi(f))$.

\item Let $p(z) = \sum_{j=0}^n a_j z^j \in \K[z]$ be a 
polynomial of degree $n\in \N$. Then 
\[ \Phi(p(f)) = p(\Phi(f)) = \sum_{j=0}^n a_j \Phi(f)^j.
\]
In particular, $\dom( \Phi(p(f))) = \dom( \Phi(f)^n)$.

\end{aufzi}
\end{cor}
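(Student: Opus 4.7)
The three parts build on each other, with (a) providing the key tool. The plan is to prove (a) by factoring through a bounded multiplier, to bootstrap (b) from (a), and to handle (c) by an induction that exploits the monotonicity in (a).

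For (a), the natural factorisation is $f = hg$, where $h := f/g$ wherever $g \neq 0$ and $h := 0$ otherwise. The hypothesis $\abs{f} \le \abs{g}$ forces $f = 0$ on $\{g = 0\}$, so the equation $f = hg$ holds pointwise on all of $X$, and moreover $\abs{h} \le 1$. Hence $\Phi(h) \in \BL(H)$ with $\norm{\Phi(h)} \le 1$ by Theorem \ref{mfc.t.prop1}(e). Applying Theorem \ref{mfc.t.prop1}(b) with the bounded factor $\Phi(h)$ (playing the role of the bounded operator there), one obtains
\[
\Phi(h)\Phi(g) \,\subseteq\, \Phi(g)\Phi(h) \,=\, \Phi(gh) \,=\, \Phi(f),
\]
and the composition $\Phi(h)\Phi(g)$ has domain $\dom(\Phi(g))$, since $\dom(\Phi(h)) = H$. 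The graph inclusion then yields both $\dom(\Phi(g)) \subseteq \dom(\Phi(f))$ and, for $x$ in this domain, $\norm{\Phi(f)x} = \norm{\Phi(h)\Phi(g)x} \le \norm{\Phi(g)x}$.

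Part (b) follows from (a) applied twice to the pair $(f, \abs{f})$: both $\abs{f} \le \bigl\lvert\abs{f}\bigr\rvert$ and $\bigl\lvert\abs{f}\bigr\rvert \le \abs{f}$ are tautologies, so (a) delivers both inclusions of domains and both inequalities of norms, hence the claimed equalities.

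For (c) I would proceed in three steps. First, since $\abs{f}^k \le 1 + \abs{f}^n$ for $0 \le k \le n$, part (a) together with (b) and Theorem \ref{mfc.t.prop1}(b) (to absorb the constant $1$) gives the decreasing chain $\dom(\Phi(f^n)) \subseteq \dom(\Phi(f^k))$. Second, induction on $n$ using (MFC3) yields $\Phi(f)^n = \Phi(f^n)$ as closed operators: in the induction step the relation $\Phi(f^{n-1})\Phi(f) \subseteq \Phi(f^n)$ has, by (MFC3), domain $\dom(\Phi(f)) \cap \dom(\Phi(f^n)) = \dom(\Phi(f^n))$, so the inclusion is already an equality. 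Third, for $p(z) = \sum_{j=0}^n a_j z^j$ with $a_n \neq 0$, on the common domain $\dom(\Phi(f^n))$ all quantities $\Phi(f^j)x = \Phi(f)^j x$ are defined, and iterated application of (MFC2) gives $\sum_j a_j \Phi(f)^j x = \Phi(p(f)) x$. The two-sided polynomial estimate $\abs{p(z)} \le C_1(1+\abs{z}^n)$ and $\abs{z}^n \le C_2(1+\abs{p(z)})$ (valid because $a_n \neq 0$), combined once more with (a) and (b), shows $\dom(\Phi(p(f))) = \dom(\Phi(f^n)) = \dom(\Phi(f)^n)$, matching the domain of $p(\Phi(f))$.

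The main obstacle is pure bookkeeping: tracking the domain conditions in (MFC3) and using the commutation granted by Theorem \ref{mfc.t.prop1}(b) to pass from the naive composition $\Phi(g)\Phi(h)$ to its commuted form $\Phi(h)\Phi(g)$, whose domain is known a priori to equal $\dom(\Phi(g))$. Once this is arranged cleanly in (a), the other parts are essentially algebraic consequences.
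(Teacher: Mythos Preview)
Your proof is correct and follows essentially the same approach as the paper. The only cosmetic differences are that in (a) the paper invokes (MFC3) directly to get $\Phi(h)\Phi(g)\subseteq\Phi(hg)=\Phi(f)$ rather than detouring through the commutation in Theorem~\ref{mfc.t.prop1}(b), and in (c) the paper handles the domain comparisons by splitting off the bounded part via $\car_{\{|f|\le 1\}}$ (respectively $\car_{\{|f|\ge c\}}$) instead of your global ``$+1$'' estimates; the substance is identical.
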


\begin{proof}
a) If $\abs{f}\le \abs{g}$ then we can write $f = hg$, where $h$ is
the function
\[  h := \begin{cases} \frac{f}{g} & \text{on}\quad \set{g\neq 0}\\
0 & \text{on}\quad \set{g= 0}.
\end{cases}
\]
Then $\abs{h}\le 1$ and hence $\Phi(h)$ is bounded with
$\norm{\Phi(h)} \le 1$. From (MFC3) it follows that 
\[ \Phi(h)\Phi(g) \subseteq \Phi(f)
\]
with $\dom(\Phi(g)) \subseteq \Phi(f)$. Furthermore, if $x\in
\dom(\Phi(g))$
we obtain
\[ \norm{\Phi(f)x} = \norm{\Phi(h)\Phi(g)x} \le \norm{\Phi(g)x}
\]
as claimed.

\prfnoi
b) follows from a).

\prfnoi
c)\  For $n \ge 2$ write 
$f^n = f^n \car_{\set{\abs{f}\le 1}} + f^n \car_{\set{\abs{f} >
    1}}$. Since the first summand is bounded, one has
\[  \dom(\Phi(f^n)) = \dom( \Phi( f^n \car_{\set{\abs{f} > 1}})) 
\subseteq \dom( \Phi( f^{n{-}1} \car_{\set{\abs{f} > 1}}))
= \dom( \Phi(f^{n{-}1}))
\]
by a). It follows that 
\[ \Phi(f) \Phi(f^{n{-}1}) =  \Phi(f^n).
\]
by (MFC3). By induction, we obtain
\[ \Phi(f^n) = \Phi(f)^n \quad (n \ge 1).
\]
Now take $p \in \K[z]$ as in the hypothesis. Since $\deg(p) = n$, we
have $a_n \neq 0$ and there are numbers $0 < a < b$ and $c> 0$
such that 
\[      a\abs{z}^n \le \abs{p(z)} \le b \abs{z}^n \qquad (\abs{z} \ge
c).
\]
Similarly as above, multiplying with $\car_{\set{\abs{f}\ge c}}$ shows
that 
\[ \dom( \Phi(p(f))) = \dom(\Phi(f^n)) =
\dom(\Phi(f)^n).
\] 
Since
\[ \sum_{j=0}^n a_j \Phi(f)^j \subseteq \Phi(p(f)).
\]
by (MFC2), the assertion is proved. 
\end{proof}

So far, we have used (MFC5) only to establish the strengthening
(MFC5').  We shall explore further consequences of (MFC5) in the
following section.

\medskip

\subsection{Approximations of the Identity and Further Properties}\label{mfc.s.prop2}

Let $(\Phi, H)$ be a measurable functional calculus 
on the measurable space $(X, \Sigma)$.
An {\emdf approximate identity} in $\Meas(X, \Sigma)$ is a
sequence $(e_n)_n$ of bounded measurable functions
such that $e_n \to \car$ pointwise and boundedly. 
It then follows from (MFC5') (see Theorem \ref{mfc.t.prop1}.f)
that $\Phi(e_n) \to \Id$
strongly on $H$.

Such approximate identities abound. For instance,
given $f\in \Meas(X, \Sigma)$ the
sequences of functions
\[ e_n := \frac{n}{n + \abs{f}} \quad \text{and}
\quad \widetilde{e}_n := \car_{\set{\abs{f}\le n}}\qquad (n \in \N)
\]
are both approximate identities. Furthermore, as 
$e_n^{-1} = 1 + \frac{1}{n} \abs{f}$, one has
\[ \Phi(e_n)^{-1} = 
\Phi(e_n^{-1}) = \Id + \tfrac{1}{n} \Phi(\abs{f}).
\]
This yields
\[ \dom(\Phi(f)) = \dom(\Phi(\abs{f}))
= \dom(\Phi(e_n)^{-1}) = \ran(\Phi(e_n))
\]
for each $n\in \N$. It follows once more that 
$\Phi(f)$ must be densely defined. 
But more is true.

\begin{thm}\label{mfc.t.prop2}
Let $(\Phi, H)$ be a measurable functional calculus 
on the measurable space $(X, \Sigma)$.
Then the following assertions hold
($f,\:g\in \Meas(X, \Sigma)$):
\begin{aufzi}
\item $\dom(\Phi(f)) \cap \dom(\Phi(g))$ is a 
core for $\Phi(g)$. 

\item $\cls{\Phi(f)+\Phi(g)} = \Phi(f+g)$\quad and\quad
$\cls{\Phi(f)\Phi(g)} = \Phi(fg)$.

\item $\Phi(f)^* = \Phi(\konj{f})$.

\item $\Phi(f)$ is normal and $\Phi(\konj{f}) \Phi(f)
= \Phi(\abs{f}^2)$.

\item If $f$ is real-valued then $\Phi(f)$ is self-adjoint. 
Moreover, if $f$ and $g$ are real-valued and $f\le g$, then
\[  \dprod{\Phi(f)x}{x} \le \dprod{\Phi(g)x}{x}
\]
for all $x\in \dom(\Phi(f)) \cap \dom(\Phi(g))$.

\item The set $\bdd(\Phi)$ of $\Phi$-bounded elements is a unital
  $*$-subalgebra of $\Meas(X, \Sigma)$ and 
\[ \Phi: \bdd(\Phi) \to \BL(H)
\]
is a unital $*$-homomorphism.  Moreover, the following
generalization of {\rm (MFC5)} holds: if $(f_n)_n$ is a sequence
in $\bdd(\Phi)$ such that $f_n \to f$ pointwise and 
$\sup_n \norm{\Phi(f_n)} < \infty$, then  $f\in \bdd(\Phi)$ and
$\Phi(f_n) \to \Phi(f)$ strongly on $H$.  
\end{aufzi}
\end{thm}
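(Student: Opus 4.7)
My plan is to exploit the approximate identities introduced in the paragraph preceding the theorem as the single main tool. For each $f \in \Meas(X,\Sigma)$ the sequences $e_n := n/(n+\abs{f})$ or $e_n := \car_{\set{\abs{f}\le n}}$ satisfy $e_n \to \car$ bp and, crucially, $\ran(\Phi(e_n)) \subseteq \dom(\Phi(f))$ (and $\Phi(e_n)$ commutes with $\Phi(f)$ by Theorem \ref{mfc.t.prop1}.b). Setting $x_n := \Phi(e_n)x$ therefore produces an approximating sequence with strong control over domains. This yields (a) immediately: for $x \in \dom(\Phi(g))$ one has $x_n \in \dom(\Phi(f))\cap\dom(\Phi(g))$, $x_n \to x$, and $\Phi(g)x_n = \Phi(e_n)\Phi(g)x \to \Phi(g)x$, so closedness gives a core. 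For (b) one uses a joint cut-off $e_n := \car_{\set{\abs{f}+\abs{g}+\abs{fg}\le n}}$: starting from $x \in \dom(\Phi(f{+}g))$ or $\dom(\Phi(fg))$, the vectors $x_n := \Phi(e_n)x$ lie in $\dom(\Phi(f)) \cap \dom(\Phi(g))$, the inclusions (MFC2), (MFC3) become equalities on them, and the previously proven $\Phi(e_n)$-commutation gives convergence of the sums/products to the desired limits.

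For (c) I would prove the two inclusions separately. The inclusion $\Phi(\konj f)\subseteq \Phi(f)^*$ is obtained by truncating both sides with $\Phi(e_n)$ ($e_n$ bounded and real), reducing to (MFC4) applied to the bounded functions $e_n f$, and passing to the limit via (MFC5'). For the reverse inclusion, given $y\in \dom(\Phi(f)^*)$ with $z:=\Phi(f)^*y$, I choose $e_n := n/(n+\abs{f})$ so that $\ran(\Phi(e_n))\subseteq \dom(\Phi(\konj f))=\dom(\Phi(f))$ (Corollary \ref{mfc.c.prop1}.b). Since $e_n$ is real, $\Phi(e_n)$ is self-adjoint and commutes with $\Phi(f)$, hence with $\Phi(f)^*$; therefore $y_n:=\Phi(e_n)y \to y$ with $\Phi(\konj f)y_n = \Phi(f)^*y_n = \Phi(e_n)z \to z$, and closedness of $\Phi(\konj f)$ finishes the argument. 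Part (d) then follows because (MFC3) and Corollary \ref{mfc.c.prop1}.a give $\Phi(\konj f)\Phi(f)=\Phi(\abs f^2)$ (domains match since $\abs f \le 1 + \abs f^2$), and the symmetric identity proves normality. Part (e) is immediate from (c) for self-adjointness; for the monotonicity, I truncate via $e_n := \car_{\set{\abs f, \abs g\le n}}$, use the bounded case $\Phi(\tilde f)\le \Phi(\tilde g)$ for bounded real $\tilde f \le \tilde g$ (which follows from (MFC4) by writing $\tilde g - \tilde f = h^2$ with $h := (\tilde g - \tilde f)^{1/2}$ bounded), and pass to the limit using strong convergence of $\Phi(e_n)\Phi(f)x$ and $\Phi(e_n)\Phi(g)x$.

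The hardest part will be the generalization of (MFC5) in (f), because $f$ need not be bounded and the $f_n$ are not assumed to be bp-bounded either. The key observation is that since $f_n(x)\to f(x)$ pointwise, the function $g := \sup_n \abs{f_n}$ is finite everywhere, hence $e_k := \car_{\set{g\le k}}$ is a bona fide approximate identity. The truncated sequence $f_n e_k$ is uniformly bounded by $k$ and converges pointwise to $fe_k$, so (MFC5') gives $\Phi(f_n e_k)\to\Phi(f e_k)$ strongly. Using the *-homomorphism structure on $\bdd(\Phi)$ (already established via Theorem \ref{mfc.t.prop1}.b and (c)), $\Phi(f_n e_k)=\Phi(f_n)\Phi(e_k)$, so $\Phi(f_n)$ converges strongly on the dense set $\bigcup_k \ran(\Phi(e_k))$. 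The uniform bound $\sup_n\norm{\Phi(f_n)}<\infty$ together with Banach--Steinhaus then upgrades this to strong convergence $\Phi(f_n)\to T$ on all of $H$ with $T\in\BL(H)$. Finally $T\Phi(e_k)=\Phi(f e_k)=\Phi(f)\Phi(e_k)$ by the boundedness of $e_k$, so for any $x\in H$ the pair $(\Phi(e_k)x,\,T\Phi(e_k)x)=(\Phi(e_k)x,\,\Phi(f)\Phi(e_k)x)$ lies in $\Phi(f)$ and converges to $(x,Tx)$; closedness of $\Phi(f)$ therefore forces $\Phi(f)=T\in\BL(H)$, i.e.\ $f\in\bdd(\Phi)$.
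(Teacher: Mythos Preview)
Your proof is correct and shares the paper's overall strategy --- systematic use of approximate identities --- but two steps take genuinely different routes that are worth noting. For (d), you obtain $\Phi(\konj f)\Phi(f)=\Phi(\abs f^2)$ directly from (MFC3) and the domain inclusion $\dom(\Phi(\abs f^2))\subseteq\dom(\Phi(f))$ (via $\abs f\le 1+\abs f^2$ and Corollary~\ref{mfc.c.prop1}.a); the paper instead passes through $\Id+\Phi(f)^*\Phi(f)\subseteq\Phi(1+\abs f^2)$ and invokes the surjectivity of the left side from abstract Hilbert space theory \cite[Thm.~13.13]{RudinFA}, so your argument is more self-contained. For (f), the paper uses the algebraic identity
\[
\frac{f_n}{1+\abs f}=f_n\Bigl(\frac{1}{1+\abs f}-\frac{1}{1+\abs{f_n}}\Bigr)+\frac{f_n}{1+\abs{f_n}}
\]
to reduce directly to (MFC5'); your device of setting $g:=\sup_n\abs{f_n}$ (finite everywhere by pointwise convergence) and using the joint cutoffs $e_k=\car_{\set{g\le k}}$ is a different and arguably more transparent path to the same conclusion. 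Parts (a), (c) are essentially the paper's arguments; your (b) is carried out by an explicit joint cutoff rather than deduced abstractly from (a) as the paper does, and your (e) truncates to the bounded case whereas the paper works directly with $u=\sqrt{g-f}$, but these are minor variations.
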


\begin{proof}
a)\  Define $e_n := n(n + \abs{f})^{-1}$ as in the remark above.
Suppose that $x,y\in H$ are such that $\Phi(g)x = y$.
Let $x_n := \Phi(e_n)x$ and $y_n:=\Phi(e_n)y$. 
Then  $\Phi(g)x_n = y_n$ (by Theorem  \ref{mfc.t.prop1}.b)
and $x_n \in \dom(\Phi(f))$ (by definition  of $e_n$).
Since $(x_n, y_n) \to (x,y)$ as
$n \to \infty$, the claim follows.

\prfnoi
b)\ To prove the first identity, 
let $h := \abs{f} + \abs{g}$. Then
$\dom(\Phi(h)) \subseteq \dom(\Phi(f)) \cap 
\dom(\Phi(g))$ by Corollary \ref{mfc.c.prop1}. On the
other hand, 
\[ \dom(\Phi(h)) = \dom(\Phi(h)) \cap \dom(\Phi(f+g))
\]
is a core for $\Phi(f+g)$ by a). The first identity follows.

For the proof of the second identity, let
$h := \abs{g} + \abs{fg}$. Then (by (MFC3))
\[ \dom(\Phi(h)) \subseteq \dom(\Phi(g)) \cap \dom(\Phi(fg))
= \dom(\Phi(f)\Phi(g)).
\]
By a), $\dom(\Phi(h))$ is a core for 
$\Phi(fg)$. Hence the second identity. 

\prfnoi
c)\ Let $e \in \BMb(X, \Sigma)$ real-valued and such that $ef$ is bounded. We then have $\Phi(e) = \Phi(e)^*$ and hence
\[ \Phi(e)  \Phi(f)^* 
\subseteq  \cls{ \Phi(e) \Phi(f)^*} 
= (\Phi(f) \Phi(e))^* = \Phi(fe)^* = \Phi(\konj{f}e) = 
\Phi(\konj{f}) \Phi(e)
\]
by \cite[Prop.C.2.1.k]{HaaseFC}.  By taking $e= e_n 
= n (n + \abs{f})^{-1}$ we conclude from $\Phi(\konj{f})$ being closed that 
$\Phi(f)^* \subseteq \Phi(\konj{f})$. On the other hand, 
since $\Phi(ef) = \cls{\Phi(e)\Phi(f)}$ by b) and 
again by \cite[Prop.C.2.1.k]{HaaseFC} we obtain
\[ \Phi(e) \Phi(\konj{f})  
\subseteq \Phi(e \konj{f}) = \Phi(ef)^* =
( \Phi(e) \Phi(f))^* = \Phi(f)^* \Phi(e). 
\]
With the same argument as before, now employing that
$\Phi(f)^*$ is closed, we obtain
$\Phi(\konj{f}) \subseteq \Phi(f)^*$.

\prfnoi
d) It is clear by (MFC3) that $\Phi(\konj{f}) \Phi(f) \subseteq 
\Phi(\abs{f}^2)$. Hence, by c),  
\[ \Id + \Phi(f)^* \Phi(f) \subseteq \Phi(1 + \abs{f}^2).
\]
By Theorem \ref{mfc.t.prop1}, 
the operator on the right-hand side is 
injective while the 
operator on the left-hand side is surjective.
(This is standard
Hilbert space operator theory, see \cite[Thm. 13.13]{RudinFA}.)
Hence, these operators must coincide. Normality of $\Phi(f)$ follows readily. 
 
\prfnoi
e) The first assertion follows from c). For the second,
let $ u := \sqrt{g-f}$ and $x\in \dom(\Phi(f)) \cap
\dom(\Phi(g))$. 
Then $x\in \dom(\Phi(g-f)) = \dom(\Phi(u)^2)$ and since
$\Phi(u)$ is self-adjoint, 
\[ \dprod{\Phi(g)x}{x} - \dprod{\Phi(f)x}{x} 
= \dprod{ \Phi(g-f)x}{x} = \dprod{\Phi(u)^2x}{x}
= \norm{\Phi(u)x}^2 \ge 0.
\]

\prfnoi
f)\ The first assertion follows readily 
from c) above and from Theorem \ref{mfc.t.prop1}.
For the second assertion, suppose that 
$(f_n)_n$ is a sequence in $\Meas(X, \Sigma)$
such that $f_n \to f$ pointwise and $\sup_{n}
\norm{\Phi(f_n)} < \infty$. Write
\[  \frac{f_n}{1 + \abs{f}} = f_n\Bigl( 
\frac{1}{1+ \abs{f}} - \frac{1}{1+\abs{f_n}}\Bigr)
+ \frac{f_n}{1 + \abs{f_n}}.
\]
By Theorem \ref{mfc.t.prop1}, it follows that
\[\Phi(f_n)\Phi\Bigl(\frac{1}{1+\abs{f}}\Bigr)=
 \Phi\Bigl( \frac{f_n}{1 + \abs{f}}\Bigr)
 = \Phi(f_n) \Phi\Bigl( 
\frac{1}{1+ \abs{f}} - \frac{1}{1+\abs{f_n}}\Bigr)
+ \Phi\Bigl(\frac{f_n}{1 + \abs{f_n}}\Bigr).
\]
By (MFC5') and the uniform  boundedness of
the operators $\Phi(f_n)$, we obtain
\[ \Phi(f_n)\Phi\Bigl(\frac{1}{1+\abs{f}}\Bigr) 
\to \Phi\Bigl(\frac{f}{1 + \abs{f}}\Bigr)
= \Phi(f) \Phi\Bigl(\frac{1}{1 + \abs{f}}\Bigr)
\]
strongly on $H$. Hence, for all $x \in \dom(\Phi(f))$ 
one has $\Phi(f_n)x \to \Phi(f)x$. The
uniform boundedness assumption implies 
that
$\Phi(f)$ is norm bounded on its domain, and since
it is a closed operator, $\Phi(f) \in \BL(H)$. Again
from the uniform boundedness it follows that
$\Phi(f_n)x\to \Phi(f)x$ for all $x\in H$.
\end{proof}

\begin{rem}
By essentially the same arguments, one can
prove the following generalization of a) and b) from Theorem
\ref{mfc.t.prop2}: {\em 
Let $(\Phi, H)$ be a measurable functional calculus 
on the measurable space $(X, \Sigma)$ and let
$f_1, \dots, f_d \in \Meas(X,\Sigma)$. Then 
\begin{aufzi}
\item the space $\bigcap_{j=1}^d \dom(\Phi(f_j))$ is a
core for each operator $\Phi(f_j)$, $j=1, \dots, d$, 
\item $ 
\cls{\Phi(f_1)+ \dots +\Phi(f_d)} = \Phi(f_1 + \dots + f_d)$,
 and
\item $
\cls{\Phi(f_1)\cdots \Phi(f_d)} = \Phi(f_1 \cdots f_d)$.
\end{aufzi}
}%
\end{rem}

\medskip

{\bf From now on, we shall use the properties
of measurable functional calculi 
expressed in Theorems \ref{mfc.t.prop1}
and \ref{mfc.t.prop2} and Corollary \ref{mfc.c.prop1}
without explicit reference.}

\medskip

\subsection{Determination by Bounded Functions}\label{mfc.s.det}

Let $(\Phi, H)$ be a measurable functional calculus 
on the measurable space $(X, \Sigma)$ and let
$f\in \Meas(X, \Sigma)$ be arbitrary. Define
\[  e :=  \frac{1}{1 + \abs{f}}
\]
Then $e, ef$ are bounded functions and hence $\Phi(e), \Phi(ef)$ are
bounded operators. As $e$ is nowhere
zero, $\Phi(e)$ is injective. (In the terminology of \cite{HaaseFC}, this means
that $e$ is a {\emdf regularizer} of $f$.)
As a consequence, we obtain 
\[ \Phi(f) = \Phi(e^{-1} ef) = \Phi(e^{-1}) \Phi(ef) 
= \Phi(e)^{-1}\Phi(ef).
\]
The identity $\Phi(f) = \Phi(e)^{-1}\Phi(ef)$ thus established can be
rephrased by means of  the equivalence
\[ \Phi(f)x = y \quad \iff\quad  \Phi(ef)x = \Phi(e)y \qquad (x,\:y\in H).
\]
We realize that $\Phi$ is completely determined by its restriction
to $\BMb(X, \Sigma)$. In particular, a measurable functional calculus
is a calculus in the sense of \cite{Haase2020pre}, and
$\BMb(X,\Sigma)$ is an ``algebraic core'' in the terminology introduced
there.

\medskip
\subsection{Restriction to Subspaces}\label{mfc.s.prt}

Suppose that $(\Phi, H)$ is a measurable calculus on $(X, \Sigma)$,
and that $K \subseteq H$ is a closed subspace of $H$, with
$P\in \BL(H)$ being the orthogonal projection onto $K$. For each
$f\in \Meas(X, 
\Sigma)$ we let $\Phi_K(f)$ be the {\em part} of $\Phi(f)$ in $K$,
that is, the operator
\[ \Phi_K(f) := \Phi(f) \cap (K \oplus K).
\]
The mapping $\Phi_K: \Meas(X, \Sigma) \to \Clo(K)$ is called
the {\emdf restriction} of $\Phi$ to $K$.

\begin{lem}\label{mfc.l.prt}
In the situation just described, the following assertions
hold:
\begin{aufzi}
\item $\Phi_K$ satisfies {\rm (MFC1)---(MFC3)}.
\item The following are equivalent:
\begin{aufzii}
\item $\Phi_K$ is a measurable functional calculus.
\item  $\Phi_K(f) \in \BL(K)$ for each $f\in \BMb(X, \Sigma)$.
\item $K$ is invariant under each $\Phi(f)$, $f\in \BMb(X,
  \Sigma)$.
\item $\Phi(f)P = P\Phi(f)$ for each  $f\in \BMb(X, \Sigma)$.
\end{aufzii}
\end{aufzi}
\end{lem}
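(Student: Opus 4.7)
The plan for part (a) is to verify (MFC1)--(MFC3) directly from the description $\Phi_K(f) = \Phi(f) \cap (K \oplus K)$. (MFC1) is immediate from $\Phi(\car) = \Id$. For (MFC2), if $(x,y) \in \Phi_K(f)$ and $(x,z) \in \Phi_K(g)$ then $x,y,z \in K$, hence $(x, y+z) \in \Phi(f+g) \cap (K \oplus K) = \Phi_K(f+g)$; the scalar case is analogous. For (MFC3), the inclusion $\Phi_K(f)\Phi_K(g) \subseteq \Phi_K(fg)$ follows from composing pairs in $K\oplus K$. The domain identity requires a small unpacking: $x \in \dom(\Phi_K(f)\Phi_K(g))$ iff $x \in K \cap \dom\Phi(g)$, $\Phi(g)x \in K \cap \dom\Phi(f)$ and $\Phi(f)\Phi(g)x \in K$; using the domain identity for $\Phi$ and $\Phi(f)\Phi(g) \subseteq \Phi(fg)$, this is equivalent to $x \in \dom\Phi_K(g) \cap \dom\Phi_K(fg)$.

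For part (b), the plan is to run the cycle (i)$\Rightarrow$(ii)$\Rightarrow$(iii)$\Rightarrow$(iv)$\Rightarrow$(ii)$\Rightarrow$(i). The step (i)$\Rightarrow$(ii) is just (MFC4) for $\Phi_K$. For (ii)$\Rightarrow$(iii), note that $\Phi_K(f) \in \BL(K)$ forces $\dom\Phi_K(f) = K$, and since $\dom\Phi(f) = H$ for bounded $f$, this unravels exactly to $\Phi(f)K \subseteq K$. Symmetrically, (iv)$\Rightarrow$(ii) because $\Phi(f)P = P\Phi(f)$ immediately gives $\Phi(f)K \subseteq K$, so $\Phi_K(f) = \Phi(f)|_K \in \BL(K)$. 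The remaining implication (ii)$\Rightarrow$(i) requires checking (MFC4) and (MFC5) for $\Phi_K$. Weak bp-continuity is inherited simply by restricting weak limits to $K$. For the $*$-identity, given $x,y \in K$ one has $\langle \Phi_K(f)x, y\rangle = \langle x, \Phi(\konj f)y\rangle$; since (ii) implies (iii), and applying (iii) to $\konj f$ gives $\Phi(\konj f) K \subseteq K$, the right-hand side equals $\langle x, \Phi_K(\konj f)y\rangle$, so $\Phi_K(f)^* = \Phi_K(\konj f)$.

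The only step that is not purely mechanical is (iii)$\Rightarrow$(iv). The idea here is to exploit the fact that $\BMb(X,\Sigma)$ is closed under complex conjugation and that $\Phi(\konj f) = \Phi(f)^*$ by (MFC4) for $\Phi$. Applying (iii) to $\konj f$ then yields $\Phi(f)^* K \subseteq K$, equivalently $\Phi(f) K^\perp \subseteq K^\perp$. Invariance of both $K$ and $K^\perp$ under the bounded operator $\Phi(f)$ is exactly the assertion $\Phi(f)P = P\Phi(f)$, closing the cycle. Thus the real bite of the equivalence lies in this one place where (MFC4) of the ambient calculus must be invoked to upgrade one-sided invariance into honest commutation with $P$; without the $*$-axiom the implication would simply fail.
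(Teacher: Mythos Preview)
Your proof is correct and follows essentially the same route as the paper's: part (a) is verified directly (the paper merely calls it ``straightforward''), and in part (b) both you and the paper establish the cycle via the key observation that (iii) applied to $\konj{f}$ yields $\Phi(f)^*K \subseteq K$, hence invariance of $K^\perp$ and commutation with $P$. Your treatment is in fact more careful than the paper's in one respect: where the paper simply asserts that under (ii) ``one obviously has (MFC4) and (MFC5)'', you correctly note that verifying $\Phi_K(f)^* = \Phi_K(\konj{f})$ still requires knowing $\Phi(\konj{f})K \subseteq K$, i.e.\ invoking (iii) for $\konj{f}$ --- the same idea that drives (iii)$\Rightarrow$(iv).
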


\begin{proof}
a) is straightforward to verify.

\prfnoi
b)\ It is clear that (i) implies (ii). If (ii) holds and $f\in \BMb(X,
\Sigma)$, then $\Phi_K(f) = \Phi(f) \cap (K \oplus K)$ is fully
defined. But that means that $\Phi(f)$ must map $K$ into $K$. This
yields (iii). Suppose that (iii) holds and let $f\in\BMb(X, \Sigma)$.
Then  $\Phi(f)P = P\Phi(f)P$. Since $K$ is also invariant under
$\Phi(\konj{f}) = \Phi(f)^*$, one also has
\[   \Phi(f)(\Id - P) = (\Id -P) \Phi(f) (\Id - P)
\]
and combining both identities yields (iv).

\prfnoi
The implications (iv)$\dann$(iii)$\dann$(ii) are trivial.
If (ii) holds, then one obviously has (MFC4) and (MFC5). 
Hence, by a), (i) follows. 
\end{proof}

\medskip

\subsection{Pull-back and Push-Forward 
of a Measurable Calculus}\label{mfc.s.pull}

Suppose that $(\Phi, H)$ is a measurable calculus on $(X, \Sigma)$,
and $U: H \to K$ is an isometric isomorphism of Hilbert spaces $H$ and
$K$.
Then a measurable calculus $(\Psi, K)$ may be defined  by 
\[ \Psi(f) := U \Phi(f) U^{-1} \qquad (f\in \Meas(X, \Sigma)).
\]
(This is easily checked.)

\medskip
In contrast to the above situation, in which the measurable space
is kept and the Hilbert space is changed, 
one may transfer a measurable calculus to
a different measurable space in the following way.

\begin{prop}\label{mfc.p.pullback}
Let $(X, \Sigma_X)$ and $(Y, \Sigma_Y)$ be measurable spaces,
let $(\Phi, H)$  be a 
measurable functional calculus on $(X, \Sigma_X)$
and let $T: \Meas(Y, \Sigma_Y) \to \Meas(X, \Sigma_X)$
be a $*$-homomorphism with the property that
if $f_n \to f$ pointwise and boundedly, then
$Tf_n \to Tf$ pointwise and boundedly. Then the 
mapping
\[ (T^*\Phi): \Meas(Y, \Sigma_Y) \to \Clo(H),\qquad
(T^*\Phi)(g) := \Phi(Tg),
\]
is a measurable functional calculus on $(Y, \Sigma_Y)$.
\end{prop}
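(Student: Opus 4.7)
The plan is to verify axioms (MFC1)--(MFC5) for $T^*\Phi$ one by one, transferring each property from $\Phi$ via the algebraic and bp-continuity properties of $T$.

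First, (MFC1)--(MFC3) are essentially algebraic. Since $T$ is a unital $*$-homomorphism, $(T^*\Phi)(\car_Y) = \Phi(T\car_Y) = \Phi(\car_X) = \Id$, giving (MFC1). Additivity and scalar linearity of $T$ together with (MFC2) for $\Phi$ yield (MFC2) for $T^*\Phi$; multiplicativity of $T$ combined with (MFC3) for $\Phi$ gives
\[ (T^*\Phi)(f)(T^*\Phi)(g) = \Phi(Tf)\Phi(Tg) \subseteq \Phi((Tf)(Tg)) = \Phi(T(fg)) = (T^*\Phi)(fg). \]
The domain identity from (MFC3) for $\Phi$ reads $\dom(\Phi(Tf)\Phi(Tg)) = \dom(\Phi(Tg)) \cap \dom(\Phi((Tf)(Tg)))$, which after substituting $(Tf)(Tg) = T(fg)$ is exactly the required identity for $T^*\Phi$.

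Second, (MFC5) is immediate from the hypothesis on $T$: if $f_n \to f$ pointwise and boundedly in $\Meas(Y, \Sigma_Y)$, then by assumption $Tf_n \to Tf$ pointwise and boundedly in $\Meas(X, \Sigma_X)$, so (MFC5) for $\Phi$ gives $(T^*\Phi)(f_n) = \Phi(Tf_n) \to \Phi(Tf) = (T^*\Phi)(f)$ weakly.

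The main step, and the only non-formal one, is (MFC4), for which I must first show that $T$ maps $\BMb(Y, \Sigma_Y)$ into $\BMb(X, \Sigma_X)$. My plan is to do this via positivity: for $h \in \BMb(Y, \Sigma_Y)$ with $h \ge 0$, the function $\sqrt h$ is bounded, measurable and real-valued, hence $T\sqrt h$ is real-valued (as $T$ respects the involution) and $Th = T(\sqrt h \cdot \sqrt h) = (T\sqrt h)^2 \ge 0$. Applied to $M^2 \car - \abs f^2 \ge 0$ for any $f \in \BMb$ with $\norm{f}_\infty \le M$, this gives $M^2 \car - \abs{Tf}^2 \ge 0$, so $\norm{Tf}_\infty \le M$. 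Once boundedness of $Tf$ is established, (MFC4) for $\Phi$ gives $\Phi(Tf) \in \BL(H)$ and $(T^*\Phi)(f)^* = \Phi(Tf)^* = \Phi(\konj{Tf}) = \Phi(T\konj f) = (T^*\Phi)(\konj f)$, the last equality coming from $T$ being a $*$-homomorphism. The main obstacle is precisely this positivity/boundedness step; everything else is bookkeeping.
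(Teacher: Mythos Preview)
Your proof is correct and is exactly the expansion of the paper's one-word proof (``Straightforward''): verify (MFC1)--(MFC5) by transporting the corresponding properties of $\Phi$ through the algebraic and bp-continuity properties of $T$. The only remark is that you take $T$ to be unital when checking (MFC1) --- this is not stated explicitly in the proposition but is clearly intended (and automatic in the main application, the Koopman pull-back $T_\varphi$) --- and your positivity argument showing that $T$ maps bounded functions to bounded functions is the one genuinely non-formal step, which the paper leaves implicit.
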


\begin{proof}
Straightforward.
\end{proof}

\noindent
The new functional calculus $T^*\Phi$ is called the 
{\emdf pull-back} of $\Phi$ {\emdf along} $T$.

A particular instance of a pull-back occurs
in the case of a measurable mapping 
$\vphi: X \to Y$.  The induced ``Koopman operator''
\[\ T_\vphi:  \Meas(Y, \Sigma_Y) \to \Meas(X, \Sigma_X),
\qquad T_\vphi(g) := g \nach \vphi
\]
satisfies the hypothesis of Proposition \ref{mfc.p.pullback}. Hence, 
its pull-back is 
\[ \Phi^\vphi := (T_\vphi^*\Phi) : \Meas(Y\Sigma_Y) \to \Clo(H),\qquad 
\Phi^\vphi(f) := \Phi(f\nach \vphi).
\]

\begin{cor}\label{mfc.p.push}
In the situation described above, the mapping 
$\Phi^\vphi: \Meas(Y, \Sigma_Y) \to \Clo(H)$ is a measurable
functional calculus.
\end{cor}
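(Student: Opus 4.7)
The plan is simply to verify that the Koopman operator $T_\vphi$ fits the framework of Proposition \ref{mfc.p.pullback} and then invoke that proposition; the corollary is an immediate specialization.

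First I would check that $T_\vphi$ maps $\Meas(Y,\Sigma_Y)$ into $\Meas(X,\Sigma_X)$. This is the standard fact that composition of measurable maps is measurable: if $f : Y \to \K$ is $\Sigma_Y$-to-Borel measurable and $\vphi : X \to Y$ is $\Sigma_X$-to-$\Sigma_Y$ measurable, then $f \nach \vphi$ is $\Sigma_X$-to-Borel measurable. Next I would verify that $T_\vphi$ is a unital $*$-homomorphism. This is entirely pointwise: for $f,g \in \Meas(Y,\Sigma_Y)$ and $\lambda \in \K$ one has $(f+g)\nach\vphi = f\nach\vphi + g\nach\vphi$, $(fg)\nach\vphi = (f\nach\vphi)(g\nach\vphi)$, $(\lambda f)\nach\vphi = \lambda(f\nach\vphi)$, $\konj{f}\nach\vphi = \konj{f\nach\vphi}$, and $\car_Y \nach \vphi = \car_X$.

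Then I would check the bp-continuity hypothesis. Suppose $f_n \to f$ pointwise and boundedly on $Y$, say with $\sup_n \norm{f_n}_\infty \le M$. For every $x \in X$ one has $(f_n\nach\vphi)(x) = f_n(\vphi(x)) \to f(\vphi(x)) = (f\nach\vphi)(x)$, so convergence is pointwise, and $\norm{f_n\nach\vphi}_\infty \le \norm{f_n}_\infty \le M$, so the convergence is also bounded. Hence $T_\vphi$ satisfies the assumption of Proposition \ref{mfc.p.pullback}.

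Applying Proposition \ref{mfc.p.pullback} now gives that $\Phi^\vphi = T_\vphi^*\Phi$, defined by $\Phi^\vphi(f) = \Phi(f \nach \vphi)$ for $f \in \Meas(Y,\Sigma_Y)$, is a measurable functional calculus on $(Y,\Sigma_Y)$. There is essentially no obstacle here: everything reduces to the elementary observation that composition with a fixed measurable map is an algebra homomorphism that preserves both pointwise convergence and uniform bounds. The substantive content has already been absorbed into Proposition \ref{mfc.p.pullback}.
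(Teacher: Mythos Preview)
Your proposal is correct and follows exactly the approach the paper intends: the paper merely asserts, just before stating the corollary, that the Koopman operator $T_\vphi$ ``satisfies the hypothesis of Proposition \ref{mfc.p.pullback}'' and leaves the verification to the reader; you have supplied precisely that routine verification.
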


\noindent
The calculus $(\Phi^\vphi, H)$ is called the {\emdf push-forward} 
of $\Phi$ {\emdf along $\vphi$}.

\medskip

This construction applies in particular in the case that $(Y,\Sigma_Y)
= (\K, \Borel(\K))$ or, more generally, $(Y,\Sigma_Y)
= (\K^d, \Borel(\K^d))$. To wit, each tuple 
$\vphi := (\vphi_1, \dots, \vphi_d)$ of measurable scalar functions
induces a measurable calculus on $\Meas(\K^d,
\Borel(\K^d))$. We shall see below in Proposition  \ref{uni.p.uni-mult} 
that this calculus
does only depend on the tuple of operators $\Phi(\vphi_1), \dots, 
\Phi(\vphi_d)$.

\medskip

\subsection{Multiplication Operators}\label{mfc.s.mul}

Let $\prO = (\Omega, \calF,\mu)$ be any measure space.
For a measurable function $a: \Omega \to \K$ we define
the corresponding {\emdf multiplication operator} $M_a$ on
$H:= \Ell{2}(\prO)$ by 
\[ M_a x = y \,\,\defiff\,\, ax = y,
\]
where on the right-hand side we mean equality of equivalence classes,
i.e., almost everywhere equality of representatives. In other words,
\[ 
M_ax := af \quad \text{for}\quad f\in \dom(M_a) := \{f\in \Ell{2}(\prO) \suchthat af\in \Ell{2}(\prO)\}.
\]
It is obvious that $M_a$ depends only on the equivalence class of the
function $a$
modulo equality almost everywhere. We shall freely make use of this
observation in the following and form operators $M_a$ also in the
case when  $a$ is merely essentially measurable. 


\begin{thm}\label{mfc.t.mul}
Let $\prO = (\Omega, \calF,\mu)$ be a measure space and define
\[ \Phi(a) := M_a \qquad (a\in \Meas(\Omega, \calF)).
\]
Then $\Phi$ is a measurable functional calculus.
\end{thm}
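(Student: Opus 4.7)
The plan is to verify, in order, that $M_a$ is a closed (densely defined) operator on $\Ell{2}(\prO)$ for every measurable $a$, and then check each of the five axioms (MFC1)--(MFC5) in turn. Each verification reduces to an elementary pointwise manipulation of representatives combined with standard results from integration theory (monotone/dominated convergence, almost-everywhere subsequences).

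\textbf{Closedness.} Given $a$, I would show that if $f_n \to f$ in $\Ell{2}$ and $af_n \to g$ in $\Ell{2}$, then after passing to a common a.e.-convergent subsequence one gets $af = g$ almost everywhere, so $f \in \dom(M_a)$ and $M_a f = g$. Density of the domain is immediate: the sets $\set{|a| \le n}$ exhaust $\Omega$ up to a null set, so $\car_{\set{|a|\le n}} f \to f$ in $\Ell{2}$ for every $f$, and each such cutoff lies in $\dom(M_a)$.

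\textbf{Verification of the axioms.} Axiom (MFC1) is trivial from $\car \cdot f = f$. For (MFC2), if $f \in \dom(M_a) \cap \dom(M_b)$ then $(a+b)f = af + bf \in \Ell{2}$, giving the inclusion $M_a + M_b \subseteq M_{a+b}$; the scalar claim is similar. For (MFC3) I would split the two containments: (i) if $f \in \dom(M_a M_b)$ then $bf \in \Ell{2}$ and $a(bf) = (ab)f \in \Ell{2}$, hence $f \in \dom(M_{ab})$ and the composite equals $M_{ab} f$; (ii) conversely, if $f \in \dom(M_b) \cap \dom(M_{ab})$ then $bf \in \Ell{2}$ and $a(bf) = (ab)f \in \Ell{2}$, so $bf \in \dom(M_a)$, i.e., $f \in \dom(M_a M_b)$. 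This gives both $M_a M_b \subseteq M_{ab}$ and the domain identity. For (MFC4), if $a$ is bounded then $\norm{af}_2 \le \norm{a}_\infty \norm{f}_2$, so $M_a \in \BL(\Ell{2}(\prO))$; the adjoint identity $M_a^* = M_{\konj{a}}$ follows from
\[
\dprod{M_a f}{g} = \int_\Omega af\,\konj{g}\,\diff{\mu} = \int_\Omega f\,\konj{\konj{a}g}\,\diff{\mu} = \dprod{f}{M_{\konj{a}}g}
\]
valid for all $f,g \in \Ell{2}(\prO)$.

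\textbf{Weak bp-continuity.} For (MFC5), suppose $a_n \to a$ pointwise with $C := \sup_n \norm{a_n}_\infty < \infty$. For arbitrary $f,g \in \Ell{2}(\prO)$ the integrand $a_n f \konj{g}$ is dominated by $C|f\konj{g}| \in \Ell{1}(\prO)$, so dominated convergence yields
\[
\dprod{M_{a_n} f}{g} = \int_\Omega a_n f\,\konj{g}\,\diff{\mu} \to \int_\Omega a f\,\konj{g}\,\diff{\mu} = \dprod{M_a f}{g},
\]
which is precisely weak convergence $M_{a_n} \to M_a$.

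\textbf{Main obstacle.} Nothing here is deep; the only non-mechanical point is the precise treatment of the domain equality in (MFC3), where one must be careful that ``$a(bf)$'' really coincides with ``$(ab)f$'' as equivalence classes and that $f \in \dom(M_{ab})$ implies $bf \in \Ell{2}$ on the set where $a$ is nonzero (which is handled simply by the pointwise identity). Once one is comfortable working with essentially measurable representatives modulo null sets, every verification reduces to a one-line pointwise calculation.
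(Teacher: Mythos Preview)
Your proof is correct and is exactly the direct verification the paper has in mind; the paper's own proof reads simply ``This is straightforward.'' One small expository point: your ``Main obstacle'' remark about $f \in \dom(M_{ab})$ forcing $bf \in \Ell{2}$ on $\set{a\neq 0}$ is unnecessary, since in your step (ii) you already assume $f \in \dom(M_b)$, which gives $bf \in \Ell{2}$ outright.
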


\begin{proof}
This is straighforward. 
\end{proof}

As described in the previous section, 
the measurable calculus described above generates a wealth of
related measurable calculi as push-forwards. Let, as above, 
$\prO = (\Omega,\calF,\mu)$ be a measure space, and let 
$(X, \Sigma)$ be any measurable space and 
\[ a: \Omega \to X
\]
a measurable (or just essentially measurable) function. For a
measurable function $f\in \Meas(X, \Sigma)$ define
\[ \Phi(f) := M_{f\nach a}, 
\]
which is a closed operator on $H := \Ell{2}(\prO)$. By Corollary \ref{mfc.p.push},
the mapping $\Phi: \Meas(X, \Sigma) \to \Clo(H)$ is a measurable
functional
calculus.

\section{Projection-Valued Measures and Null Sets}\label{s.pvm}

If $(\Phi, H)$ is a measurable functional
calculus on a measurable space $(X, \Sigma)$, then 
the mapping
\[ \Ee_\Phi: \Sigma \to \BL(H),\qquad 
\Ee_\Phi(B) := \Phi(\car_B) \in \BL(H) 
\qquad (B \in \Sigma)
\]
is a {\emdf projection-valued measure}. This means that 
$\Ee := \Ee_\Phi$  has the following, easy-to-check properties:
\begin{aufziii}
\item[\quad(PVM1)]  $\Ee(B)$ is an orthogonal projection on $H$
for each $B \in \Sigma$.
\item[\quad(PVM2)]  $\Ee(X) = \Id$.
\item[\quad(PVM3)]  If $B = \bigsqcup_{n=1}^\infty B_n$ with all $B_n \in \Sigma$ then 
$\sum_{n=1}^\infty \Ee(B_n) = \Ee(B)$ in the strong 
(equivalently: weak) operator topology.
\end{aufziii}
(The equivalence of convergence in weak and strong
operator topology in 3) is shown similarly as f) in 
Theorem \ref{mfc.t.prop1}.) 
A  projection-valued
measure is nothing but a {\emdf resolution of the identity}
in the terminology of Rudin \cite[12.17]{RudinFA}.

Several concepts and results treated from now on actually depend
only on the properties of the projection-valued measure. 
However,  it is well known that associated with each projection-valued
measure $\Ee$ on $(X, \Sigma)$ there exists a (unique)
measurable functional calculus $\Phi_\Ee$ such that 
$\Ee= \Ee_{\Phi_\Ee}$ (see Theorem  \ref{con.t.ext-pvm} below). It is therefore
no loss of generality when we treat said concepts and results
in the framework of measurable calculi.

\medskip

\subsection{Null Sets}\label{pvm.s.null}

Let $(\Phi,H)$ be a fixed measurable functional
calculus on $(X,\Sigma)$. Then a set $B \in \Sigma$
is called a {\emdf $\Phi$-null set} if 
$\Phi(\car_B) = 0$. The set
\[ \calN_\Phi := \{ B \in \Sigma \suchthat \Phi(\car_B)=0\}
\]
of $\Phi$-null sets
is a $\sigma$-ideal of $\Sigma$. (This is a simple exercise.) 
Similarly to 
standard measure theory, we say that
something happens {\emdf $\Phi$-almost everywhere} if it fails to  happen
at most on a $\Phi$-null set. For instance, the assertion
 ``$f=g$ $\Phi$-almost everywhere'' for two functions 
$f,\:g\in \Meas(X, \Sigma)$
means just that  $\set{f\neq g} \in \calN_\Phi$.

\begin{lem}\label{pvm.l.null}
Let $(\Phi,H)$ be a measurable functional
calculus on $(X,\Sigma)$ and let  $f,\: g\in \Meas(X,\Sigma)$.
Then the following assertions  hold:
\begin{aufzi} 
\item $\ker( \Phi(f)) = \ker \bigl(\Phi(\car_{\set{f\neq 0}})\bigr)$.

\item  $\Phi(f) = 0 \quad  \iff\quad f = 0$\quad $\Phi$-almost everywhere.

\item $\Phi(f) = \Phi(g)\quad \iff\quad f= g$\quad$\Phi$-almost
  everywhere.

\item $\Phi(f) \,\,\text{is injective} \quad  \iff\quad f \neq 0$\quad 
$\Phi$-almost everywhere.
\end{aufzi}
\end{lem}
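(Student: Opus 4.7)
My plan is to prove (a) first and then deduce (b), (c), (d) from it together with basic facts about projections and cores already established in Theorems \ref{mfc.t.prop1} and \ref{mfc.t.prop2}.

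For (a), write $B := \set{f \neq 0}$ and note $f = f \car_B$ pointwise. Since $\car_B$ is bounded, Theorem \ref{mfc.t.prop1}.b yields $\Phi(f)\Phi(\car_B) = \Phi(f)$, which immediately gives the inclusion $\ker \Phi(\car_B) \subseteq \ker \Phi(f)$. For the reverse inclusion, the natural idea is to regularize: set $h_n := \car_{\set{\abs{f} \ge 1/n}}/f$ (with the convention $h_n = 0$ on $\set{\abs{f} < 1/n}$), so $h_n$ is bounded and $h_n f = \car_{\set{\abs{f} \ge 1/n}}$. If $\Phi(f)x = 0$, then since $\Phi(h_n)$ is bounded, the commutation statement in Theorem \ref{mfc.t.prop1}.b gives $\Phi(\car_{\set{\abs{f} \ge 1/n}})x = \Phi(h_n)\Phi(f)x = 0$. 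Since $\car_{\set{\abs{f} \ge 1/n}} \to \car_B$ pointwise and boundedly, (MFC5$'$) (Theorem \ref{mfc.t.prop1}.f) yields $\Phi(\car_B)x = 0$. This closing argument is the only non-formal step and the one place where (MFC5) is essential.

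For (b), the equivalence $\Phi(f) = 0 \iff f = 0$ $\Phi$-a.e.\ follows from (a): $\Phi(f) = 0$ means $\ker \Phi(f) = H$ (since $\Phi(f)$ is densely defined and closed, zero on a dense set means the zero operator on $H$), hence $\ker \Phi(\car_B) = H$ by (a), and since $\Phi(\car_B)$ is a projection this forces $\Phi(\car_B) = 0$, i.e.\ $B \in \calN_\Phi$. The reverse implication runs the same chain backwards.

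For (c), I would reduce to (b) via $h := f - g$. If $f = g$ $\Phi$-a.e.\ then $\set{h \neq 0} \in \calN_\Phi$, so $\Phi(h) = 0$ by (b); then (MFC2) gives $\Phi(f) - \Phi(g) \subseteq \Phi(h) = 0$, so $\Phi(f)$ and $\Phi(g)$ agree on $\dom(\Phi(f)) \cap \dom(\Phi(g))$, which is a common core for both by Theorem \ref{mfc.t.prop2}.a; taking closures of the two restrictions yields $\Phi(f) = \Phi(g)$. Conversely, if $\Phi(f) = \Phi(g)$ then on $\dom(\Phi(f)) = \dom(\Phi(g))$, which is contained in $\dom(\Phi(h))$ and is a core for $\Phi(h)$ (apply Theorem \ref{mfc.t.prop2}.a to $f$ and $h$), the operator $\Phi(h)$ vanishes; since $\Phi(h)$ is closed, $\Phi(h) = 0$, and (b) finishes the job. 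Finally, (d) is immediate from (a): $\Phi(f)$ is injective iff $\Phi(\car_B)$ is injective, and a projection is injective iff it equals $\Id$, iff its complement $\Phi(\car_{\set{f=0}})$ vanishes, iff $\set{f = 0} \in \calN_\Phi$.
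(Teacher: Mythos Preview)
Your argument is correct, but it diverges from the paper's in two places worth noting.

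For (a), you truncate with $h_n = \car_{\set{\abs{f}\ge 1/n}}/f$ and then invoke bp-continuity (MFC5$'$) to pass to the limit. The paper instead takes the single (unbounded) function $g := f^{-1}\car_{\set{f\neq 0}}$, observes $gf = \car_{\set{f\neq 0}}$, and uses (MFC3) directly: since $\Phi(g)\Phi(f) \subseteq \Phi(\car_{\set{f\neq 0}})$ and $0 \in \dom(\Phi(g))$, any $x$ with $\Phi(f)x = 0$ satisfies $\Phi(\car_{\set{f\neq 0}})x = \Phi(g)0 = 0$. So your remark that ``(MFC5) is essential'' is not accurate---the paper's route avoids it entirely.

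For (c), the paper's forward direction is shorter: once $\Phi(f-g) = 0$ is known to be bounded, Theorem~\ref{mfc.t.prop1}.b gives $\Phi(f) = \Phi(g) + \Phi(f-g) = \Phi(g)$ in one line, without appealing to cores. For the converse, the paper truncates to $A_n := \set{\abs{f}+\abs{g}\le n}$, shows $\Phi((f-g)\car_{A_n}) = 0$ via the bounded calculus, applies (b) on each $A_n$, and concludes using that $\calN_\Phi$ is a $\sigma$-ideal. Your approach---showing $\dom(\Phi(f)) = \dom(\Phi(g))$ is a core for $\Phi(f-g)$ via Theorem~\ref{mfc.t.prop2}.a and then closing---is equally valid and arguably more conceptual, trading the $\sigma$-ideal bookkeeping for the core machinery already developed. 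Parts (b) and (d) match the paper essentially verbatim.
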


\begin{proof}
a)\ Since $f = f \car_{\set{f\neq 0}}$ one has $\Phi(f) =
\Phi(f)\Phi(\car_{\set{f\neq 0}})$. This yields the inclusion
``$\supseteq$''. Next, define $g := f^{-1}
\car_{\set{f\neq0}}$. 
Then $gf = \car_{\set{f\neq 0}}$ and hence $\Phi(g)\Phi(f)\subseteq
\Phi(\car_{\set{f\neq 0}})$. This yields the inclusion
``$\subseteq$''.

\prfnoi
b)\  By a), $\Phi(f) = 0$ if and only if $\Phi( \car_{\set{f\neq 0}})
= 0$, if and only if $f= 0$ $\Phi$-almost
everywhere. 

\prfnoi
c)\ If $f= g$\, $\Phi$-almost everywhere, then $f- g = 0$
$\Phi$-almost everywhere and hence, by b), $\Phi(f-g) = 0$. Since
$f = g + (f-g)$, it follows by general functional calculus 
properties that 
$\Phi(f) = \Phi(g) + \Phi(f-g) = \Phi(g)$. 

Conversely, suppose that $\Phi(f) = \Phi(g)$. Abbreviate $A_n := \set{
  \abs{f} + \abs{g}\le n}$ for $n \in \N$. 
Then 
\[ \Phi( (f-g)\car_{A_n}) = \Phi(f\car_{A_n} - g\car_{A_n}) 
=  \Phi(f)\Phi(\car_{A_n}) - \Phi(g)\Phi(\car_{A_n}) = 0
\]
and hence, by b), $\set{f\neq g} \cap A_n \in \calN_\Phi$. Since
$\calN_\Phi$
is a $\sigma$-ideal, $f = g$ $\Phi$-almost everywhere. 

\prfnoi
d)\ By a), $\Phi(f)$ is injective if and only if
$\Phi(\car_{\set{f\neq 0}})$ 
is injective, if and only if $\Phi(\car_{\set{f\neq 0}}) = \Id$ 
(since it is an orthogonal projection),
if and only if $\Phi(\car_{\set{f= 0}})  = \Id - \Phi(
\car_{\set{f\neq 0}}) = 0$. 
\end{proof}

\medskip

\subsection{Concentration}\label{mfc.s.concen}

Let $(\Phi, H)$ be a measurable calculus on $(X, \Sigma)$. 
We say that $\Phi$ is {\emdf concentrated} on 
a set $Y\in \Sigma$ if $Y^c$ is a $\Phi$-null set. 
For $Y \subseteq X$  denote by $\Sigma_Y$
the {\emdf trace $\sigma$-algebra}
\[ \Sigma_Y := \{ Y \cap B \suchthat B \in \Sigma\}.
\]
If $\Phi$ is concentrated on $Y \in \Sigma$
then one can induce a measurable calculus on $(Y, \Sigma_Y)$
by defining 
\[ \Phi_Y(f) := \Phi(f^Y) \qquad (f\in \Meas(Y,\Sigma_Y)),
\]
where
\[  f^Y :=  \begin{cases} f & \text{on $Y$}\\
0 & \text{on $Y^c$}.
\end{cases}
\] 
Axioms (MFC2)--(MFC5) are immediate, and Axiom
(MFC1) holds since $\Phi$ is concentrated on $Y$.

\medskip

Conversely, if $(\Phi,H)$ is a measurable functional
calculus on $(Y, \Sigma_Y)$ then by 
\[ \Phi_X(f) := \Phi(f\res{Y})\qquad (f\in 
\Meas(X,\Sigma))
\]
one obtains a measurable functional calculus 
$(\Phi_X,H)$
on $(X, \Sigma)$ concentrated on $Y$. 
(This calculus is nothing but the push-forward
of $\Phi$ along the inclusion mapping.)
In this way, 
for a measurable set $Y \subseteq X$
a one-to-one correspondence is established between
measurable functional calculi on $(Y,\Sigma_Y)$ 
on one side
and measurable functional calculi on $(X, \Sigma)$
concentrated on $Y$ on the other.

\medskip
\subsection{Support of a Borel Calculus}\label{pvm.s.supp}

A measurable functional calculus $(\Phi, H)$ on $(X, \Sigma)$ is
called a {\emdf Borel calculus} if $X$ carries a topology and $\Sigma= 
\Borel(X)$ is the Borel $\sigma$-algebra, i.e., the smallest
$\sigma$-algebra on $X$ that contains all open sets.

In this case we call the closed subset
\[ \supp(\Phi) := X \ohne \bigcup \{ U  \suchthat 
\text{$U \in \calN_\Phi$ and $U$ is open in $X$}\}
\]
the {\emdf support} of $\Phi$. A point $x\in X$ is contained in
$\supp(\Phi)$ if and only if no open neighbourhood 
of $x$ is a $\Phi$-null set. The next result is obvious.

\begin{prop}\label{pvm.p.supp}
Let $(\Phi, H)$ be a Borel calculus on a second countable 
topological space $X$. Then $\Phi$ is concentrated on $\supp(\Phi)$. 
\end{prop}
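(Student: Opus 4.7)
The goal is to show that the open set $U_0 := X \setminus \supp(\Phi) = \bigcup \{U \text{ open in } X : U \in \calN_\Phi\}$ is itself a $\Phi$-null set.

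My plan is to exploit second countability to reduce the a priori uncountable union defining $U_0$ to a countable one. Fix a countable base $\{V_n\}_{n \in \N}$ for the topology of $X$, and let
\[ I := \{ n \in \N \suchthat V_n \subseteq U \text{ for some open } U \in \calN_\Phi \}.
\]
Each $V_n$ with $n \in I$ is a Borel set contained in some $\Phi$-null Borel set, so $V_n \in \calN_\Phi$ by the $\sigma$-ideal property of $\calN_\Phi$ (stated just after the definition of null set in Section \ref{pvm.s.null}).

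Next, I would check the set-theoretic identity $U_0 = \bigcup_{n \in I} V_n$. The inclusion $\supseteq$ is immediate. For $\subseteq$, if $x \in U_0$ then $x \in U$ for some open $U \in \calN_\Phi$; since $\{V_n\}$ is a base, there is $n$ with $x \in V_n \subseteq U$, which puts $n \in I$. Hence $U_0$ is a countable union of elements of the $\sigma$-ideal $\calN_\Phi$, so $U_0 \in \calN_\Phi$. This means exactly that $\Phi$ is concentrated on $\supp(\Phi)$.

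There is no real obstacle here; the only subtlety is the passage from an uncountable family of open null sets to a countable one, which is precisely where second countability is used. Without that hypothesis, the union of open null sets need not lie in $\Sigma$ at all, let alone be a null set.
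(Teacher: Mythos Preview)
Your argument is correct and is exactly the standard one: use a countable base to rewrite the union of open $\Phi$-null sets as a countable union of basic open sets, each contained in a null set, and then invoke that $\calN_\Phi$ is a $\sigma$-ideal. The paper does not spell out a proof at all (it declares the result ``obvious''), so there is nothing to compare against; your write-up simply makes explicit what the paper leaves to the reader.
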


\section{Spectral Theory}\label{s.spc}

In this section we shall see that a measurable calculus $(\Phi, H)$ contains
the complete information about the spectrum of each operator $\Phi(f)$.
To this end, define the $\Phi$-{\emdf essential range} of $f\in \Meas(X,\Sigma)$ 
by 
\beq\label{spc.eq.essran}
 \essran_\Phi(f) := \{ \lambda\in \K \suchthat  \forall\, \veps > 0:
\set{ \abs{f -
    \lambda} \le \veps} \notin \calN_\Phi\}.
\eeq
Then we have the following important result.

\begin{thm}\label{spc.t.main}
Let $(\Phi,H)$ be a measurable functional
calculus on $(X,\Sigma)$, let  $f \in \Meas(X,\Sigma)$, $c\ge 0$ and
$\lambda\in \K$. 
Then the following assertions  hold:
\begin{aufzi} 
\item $\spec(\Phi(f)) = \Aspec(\Phi(f)) = \essran_\Phi(f)$.

\item $f \in \essran_\Phi(f)$\,\, $\Phi$-almost everywhere.

\item $\Phi(f)\in \BL(H),\, \norm{\Phi(f)}\le c\quad  
\iff\quad \abs{f} \leq c$\quad 
$\Phi$-almost everywhere.

\item $\Phi(f)$ is self-adjoint$\quad\iff\quad$ $f\in \R$\quad $\Phi$-almost everywhere.

\item $\lambda$ is an eigenvalue of $\Phi(f)$ iff $\set{f= \lambda}$
  is not a $\Phi$-null set. And  $\Phi(\car_{\set{f= \lambda}})$ 
is the projection onto 
the eigenspace $\ker(\lambda - \Phi(f))$. 

\end{aufzi}
\end{thm}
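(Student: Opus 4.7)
The strategy is to treat (a) as the central statement and derive (b)--(e) almost mechanically from it, together with Lemma \ref{pvm.l.null}. Since $\Aspec(\Phi(f)) \subseteq \spec(\Phi(f))$ holds automatically, (a) reduces to the two inclusions $\spec(\Phi(f)) \subseteq \essran_\Phi(f) \subseteq \Aspec(\Phi(f))$.

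For $\spec \subseteq \essran$, I would argue contrapositively: if $\lambda \notin \essran_\Phi(f)$, pick $\veps > 0$ with $B := \set{|f - \lambda| \le \veps} \in \calN_\Phi$, and define $g := \car_{\comp{B}}/(f - \lambda)$ (setting $g=0$ on $B$), a bounded function with $|g|\le 1/\veps$. Since $(f-\lambda)g = \car_{\comp{B}}$ agrees with $\car$ $\Phi$-almost everywhere, Theorem \ref{mfc.t.prop1}(b) combined with Lemma \ref{pvm.l.null}(c) yields $\Phi(f-\lambda)\Phi(g) = \Id$ and $\Phi(g)\Phi(f-\lambda) \subseteq \Id$, exhibiting $\Phi(g)$ as the bounded resolvent of $\Phi(f)$ at $\lambda$. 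For $\essran \subseteq \Aspec$, fix $\lambda \in \essran_\Phi(f)$. For each $n$ the set $B_n := \set{|f-\lambda|\le 1/n}$ is not $\Phi$-null, so $P_n := \Phi(\car_{B_n})$ is a nonzero orthogonal projection; choose a unit vector $x_n$ in its range. Then Theorem \ref{mfc.t.prop1}(b) gives
\[ (\lambda\Id-\Phi(f))x_n = \Phi(\lambda-f)P_n x_n = \Phi((\lambda-f)\car_{B_n})x_n, \]
whose norm is at most $1/n$ because $(\lambda-f)\car_{B_n}$ is bounded by $1/n$ in sup-norm and the calculus is contractive on bounded functions (Theorem \ref{mfc.t.prop1}(e)). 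Hence $\lambda \in \Aspec(\Phi(f))$.

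The remaining items follow quickly. For (b), $\K \ohne \essran_\Phi(f)$ is open in the second-countable space $\K$, hence a countable union of balls $B(\lambda_n, \veps_n)$ with $\set{|f-\lambda_n|\le \veps_n} \in \calN_\Phi$, and $\set{f \notin \essran_\Phi(f)}$ sits inside the $\Phi$-null countable union of these sets. For (c), if $|f|\le c$ $\Phi$-a.e., Lemma \ref{pvm.l.null}(c) gives $\Phi(f) = \Phi(f\car_{\set{|f|\le c}})$, which is bounded of norm at most $c$ by Theorem \ref{mfc.t.prop1}(e); conversely, $\norm{\Phi(f)}\le c$ forces $\spec(\Phi(f)) \subseteq \cls{\Ball(0,c)}$, so by (a) and (b), $|f| \le c$ $\Phi$-a.e. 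For (d), self-adjointness of $\Phi(f)$ is equivalent to $\Phi(f) = \Phi(\konj f)$ by Theorem \ref{mfc.t.prop2}(c), which by Lemma \ref{pvm.l.null}(c) amounts to $f = \konj f$ $\Phi$-a.e., i.e., $f$ is real-valued $\Phi$-a.e. For (e), Lemma \ref{pvm.l.null}(a) gives $\ker(\lambda\Id - \Phi(f)) = \ker(\Phi(\car_{\set{f\neq \lambda}})) = \ran(\Phi(\car_{\set{f = \lambda}}))$, so the eigenspace coincides with the range of the projection $\Phi(\car_{\set{f=\lambda}})$, and it is nonzero exactly when $\set{f=\lambda} \notin \calN_\Phi$.

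The main obstacle lies in the second inclusion of (a): although $\Phi(f)$ may be unbounded, one must produce explicit approximate eigenvectors that stay inside the right subspaces and interact well with the closed operator $\Phi(f)$. The device is to work entirely inside $\ran(P_n)$, where the truncating projection $P_n$ converts $f$ into a bounded function and all domain subtleties disappear; the clean algebraic identity $\Phi(\lambda-f)P_n = \Phi((\lambda-f)\car_{B_n})$, which is a direct consequence of Theorem \ref{mfc.t.prop1}(b), is precisely what converts the measure-theoretic non-smallness of $B_n$ into a spectral approximation.
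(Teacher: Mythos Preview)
Your proof is correct and follows essentially the same route as the paper's: the same two inclusions for (a), the same Lindel\"of/second-countability argument for (b), and the same reductions for (c) and (e). The only cosmetic differences are that in (a) the paper modifies $f$ to a nowhere-vanishing function and invokes Theorem~\ref{mfc.t.prop1}(c) whereas you construct the resolvent $\Phi(g)$ directly, and in the converse of (d) the paper goes via $\spec(\Phi(f))\subseteq\R$ together with (b) while you use Lemma~\ref{pvm.l.null}(c) straight away---your variant is in fact slightly cleaner.
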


\begin{proof}
a)\  Passing to $\lambda - f$ if necessary we only need to show that
\[ 0 \in \spec(\Phi(f))\quad \Dann\quad 0 \in \essran_\Phi(f) 
\quad \Dann\quad 0 \in \Aspec(\Phi(f)).
\]
If $0 \notin\essran(f)$ then there is $\veps > 0$ such that
$\set{ \abs{f}\le \veps} \in \calN_\Phi$. Define
\[  g = \begin{cases}  f & \text{on $\set{\abs{f}\ge \veps}$}\\
                       \veps      & \text{else}.
\end{cases}
\]
Then $g \neq 0$ everywhere and $g^{-1}$ is bounded, so 
$\Phi(g)$ is  invertible. Since
$g = f$ $\Phi$-almost everywhere and hence $\Phi(f) = \Phi(g)$ by Lemma \ref{pvm.l.null},
we obtain $0 \in \resol(\Phi(f))$. 

Suppose now that $0 \in \essran_\Phi(f)$ and fix $n \in \N$. 
Then $A_n := \set{ \abs{f} \le \frac{1}{n}}$ is not $\Phi$-null, hence there
is a unit vector $x_n \in H$ with 
\[ x_n = \Phi(\car_{A_n}) x_n.
\]
Then
\[  \norm{\Phi(f)x_n} = \norm{\Phi(f) \Phi(\car_{A_n}) x_n} = 
\norm{\Phi(f \car_{A_n}) x_n} \le  \norm{f\car_{A_n}}_\infty \norm{x_n}
\le \frac{1}{n}.
\]
It follows that $(x_n)_n$ is an approximate eigenvector for $0$, and therefore
$0 \in \Aspec(\Phi(f))$ as claimed.

\prfnoi
b)\ Abbreviate  $M := \essran_\Phi(f)$. 
For each $\lambda \in \K\ohne M$ there is $\veps_\lambda > 0$ such that
$\set{f \in \Ball(\lambda, \veps_\lambda)}$ is a $\Phi$-null
set. Since countably many $\Ball(\lambda, \veps_\lambda)$ suffice to cover
$\K\ohne M$ and  $\calN_\Phi$ is a $\sigma$-ideal, it follows that 
$\set{f \notin M}$ is a $\Phi$-null set, and hence that $f \in M$ 
$\Phi$-almost everywhere.

\prfnoi
c)\ If $\abs{f}\le c$ $\Phi$-almost everywhere then 
$f = f \car_{\set{\abs{f}\le c}}$ $\Phi$-almost everywhere. 
Hence by Lemma \ref{pvm.l.null},   
\[ \norm{\Phi(f)} = \norm{\Phi(f\car_{\set{\abs{f}\le c}})} 
\le \norm{ f \car_{\set{\abs{f}\le c}}}_\infty \le c.
\]
Conversely, suppose that $\norm{\Phi(f)} \le c$. Then 
$\essran_\Phi(f) = \spec(\Phi(f)) \subseteq \Ball[0,c]$, and 
therefore
$\abs{f}\le c$ $\Phi$-almost everywhere, by b).

\prfnoi
d)\ If $f\in \R$ $\Phi$-almost everywhere, then $f = \konj{f}$
$\Phi$-almost everywhere, which implies (by Lemma \ref{pvm.l.null})
\[ \Phi(f) = \Phi(\konj{f}) = \Phi(f)^*.
\]
Conversely, if $\Phi(f)$ is self-adjoint, then $\spec(\Phi(f))
\subseteq \R$, and hence $f\in \R$ $\Phi$-almost everywhere, by b).

\prfnoi
e)\ Without loss of generality $\lambda = 0$. Let $P :=
\Phi(\car_{\set{f=0}})$. Then
\[ \ran(P) = \ker(\Id - P) = \ker(\Phi(\car_{\set{f\neq 0}})) 
= \ker(\Phi(f))
\]
by a) of Lemma \ref{pvm.l.null}. 
\end{proof}

\begin{cor}\label{spc.c.spc-supp}
Let $(\Phi, H)$ be a measurable functional calculus on $(X, \Sigma)$
and $f\in \Meas(X,\Sigma)$. Then 
\[  \Pspec(\Phi(f))  \subseteq f(X) \quad \text{and}\quad 
\spec(\Phi(f)) = \supp(\Phi^f) \subseteq \cls{f(X)},
\]
where $\Phi^f$ is the push-forward calculus on $\K$ defined by 
$\Phi^f(g) = \Phi(g\nach f)$. A fortiori, $\Phi^f$ is concentrated
on $\spec(\Phi(f))$.
\end{cor}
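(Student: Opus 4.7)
The plan is to unpack each of the three claims and then derive the final ``a fortiori'' statement as a direct consequence of Proposition~\ref{pvm.p.supp}. All three main claims reduce to manipulating $\Phi$-null sets, with Theorem~\ref{spc.t.main} doing the heavy lifting.

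For the inclusion $\Pspec(\Phi(f)) \subseteq f(X)$, I would simply invoke Theorem~\ref{spc.t.main}.e: if $\lambda$ is an eigenvalue, then $\set{f = \lambda}$ is not $\Phi$-null, so in particular it is nonempty (the empty set has $\car_\emptyset = 0$ and hence is $\Phi$-null). Thus there exists $x\in X$ with $f(x) = \lambda$, proving $\lambda \in f(X)$.

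The identification $\spec(\Phi(f)) = \supp(\Phi^f)$ is the core of the result, and I expect it to be the main (though still mild) obstacle, since one must carefully handle the difference between open and closed balls. By Theorem~\ref{spc.t.main}.a, it suffices to show $\essran_\Phi(f) = \supp(\Phi^f)$. The key observation is that a Borel set $B \subseteq \K$ is $\Phi^f$-null exactly when $f^{-1}(B)$ is $\Phi$-null, since
\[
\Ee_{\Phi^f}(B) = \Phi^f(\car_B) = \Phi(\car_B \nach f) = \Phi(\car_{f^{-1}(B)}).
\]
If $\lambda \notin \supp(\Phi^f)$, then some open ball $\Ball(\lambda, \veps)$ is $\Phi^f$-null; then $\cls{\Ball(\lambda, \veps/2)} \subseteq \Ball(\lambda, \veps)$, so $\set{\abs{f - \lambda} \le \veps/2}$ is $\Phi$-null, giving $\lambda \notin \essran_\Phi(f)$. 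Conversely, if $\lambda \notin \essran_\Phi(f)$, then there is $\veps > 0$ with $\set{\abs{f-\lambda} \le \veps}$ being $\Phi$-null; the open ball $\Ball(\lambda, \veps)$ is contained in this set, hence is a $\Phi^f$-null open neighborhood of $\lambda$, so $\lambda \notin \supp(\Phi^f)$.

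For the inclusion $\supp(\Phi^f) \subseteq \cls{f(X)}$, suppose $\lambda \notin \cls{f(X)}$; pick an open neighborhood $U$ of $\lambda$ disjoint from $f(X)$. Then $f^{-1}(U) = \emptyset$ is $\Phi$-null, so $U$ is a $\Phi^f$-null open neighborhood of $\lambda$, whence $\lambda \notin \supp(\Phi^f)$. Finally, the ``a fortiori'' assertion is immediate: $\K$ is second countable, so Proposition~\ref{pvm.p.supp} tells us that $\Phi^f$ is concentrated on $\supp(\Phi^f) = \spec(\Phi(f))$.
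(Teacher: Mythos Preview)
Your proof is correct and follows essentially the same route as the paper: invoke Theorem~\ref{spc.t.main}.e for the point spectrum, identify $\essran_\Phi(f)$ with $\supp(\Phi^f)$ via the equivalence ``$B$ is $\Phi^f$-null $\iff$ $f^{-1}(B)$ is $\Phi$-null'', and finish with Proposition~\ref{pvm.p.supp}. The only difference is cosmetic: the paper glosses over the open/closed ball distinction (writing $\set{\abs{f-\lambda}<\veps}$ where the definition of $\essran_\Phi$ has $\le\veps$) and declares $\essran_\Phi(f)\subseteq\cls{f(X)}$ to be ``clear'', whereas you spell out the $\veps/2$ trick and the preimage argument explicitly.
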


\begin{proof}
If $\lambda \in \Pspec(\Phi(f)$ then $\set{f= \lambda}$ is not
$\Phi$-null, and
in particular $\set{f= \lambda} \neq \leer$. Hence $\Pspec(\Phi(f))
\subseteq  f(X)$.

\prfnoi
Since $\spec(\Phi(f)) = \essran_\Phi(f)$, we have 
$\lambda \notin \spec(\Phi(f))$ iff there is
$\veps > 0$ such that $\set{\abs{f- \lambda} < \veps}$ is $\Phi$-null,
which is equivalent to say that the ball $\Ball(\lambda, \veps)$ is 
$\Phi^f$-null. This shows that $\spec(\Phi(f)) = \supp(\Phi^f)$. 
The inclusion $\essran_\Phi(f) \subseteq \cls{f(X)}$ is clear.
The last assertion then follows from Proposition \ref{pvm.p.supp}.
\end{proof}

In the special case $X= \K$ one can apply the previous result
to the mapping $f = \bfz := (z \mapsto z)$.

\begin{cor}\label{spc.c.single}
Let $(\Phi,H)$ be a Borel functional calculus on $\K$, and let
$A := \Phi(\bfz)$. Then $\Phi$ is
concentrated on $\spec(A)$ but on no strictly smaller closed set. 
\end{cor}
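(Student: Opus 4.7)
The plan is to derive the corollary directly from Corollary \ref{spc.c.spc-supp} applied to the identity function $f = \bfz : \K \to \K$, since this choice makes the push-forward calculus reduce to $\Phi$ itself.

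First I would observe that for every $g \in \Meas(\K, \Borel(\K))$ one has $g \circ \bfz = g$, and therefore the push-forward calculus $\Phi^{\bfz}$ defined by $\Phi^{\bfz}(g) := \Phi(g \circ \bfz)$ coincides with $\Phi$. Plugging $f = \bfz$ into Corollary \ref{spc.c.spc-supp} then yields
\[
\spec(A) \;=\; \spec(\Phi(\bfz)) \;=\; \supp(\Phi^{\bfz}) \;=\; \supp(\Phi),
\]
and the same corollary tells us that $\Phi^{\bfz} = \Phi$ is concentrated on $\spec(A)$. This establishes the first claim.

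For the second claim (minimality), I would argue directly from the definition of support. Suppose $F \subseteq \K$ is closed and $\Phi$ is concentrated on $F$; by definition this means $F^c \in \calN_{\Phi}$. Since $F^c$ is open, the set $F^c$ appears in the union defining $\supp(\Phi)$, so
\[
\supp(\Phi) \;=\; \K \ohne \bigcup \{\, U \suchthat U \in \calN_\Phi,\ U \text{ open}\,\} \;\subseteq\; \K \ohne F^c \;=\; F.
\]
Combined with the first step, this gives $\spec(A) = \supp(\Phi) \subseteq F$, which is exactly the minimality assertion.

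I do not expect any real obstacle here: the only subtle point is recognising that with $X = \K$ the identity function $\bfz$ makes the push-forward trivial, after which both assertions are essentially just unpacking the definition of $\supp(\Phi)$ in combination with Corollary \ref{spc.c.spc-supp}.
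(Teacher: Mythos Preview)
Your proof is correct and follows exactly the approach the paper intends: the corollary is stated immediately after the remark ``In the special case $X = \K$ one can apply the previous result to the mapping $f = \bfz$,'' and your argument simply spells this out, including the (otherwise implicit) minimality step via the definition of $\supp(\Phi)$.
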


For the next corollary we denote by 
$\bfz_j  = (z
\mapsto z_j): \C^d \to \C$, $j = 1, \dots, d$, the coordinate
projections, and consider $\R^d \subseteq \C^d$ canonically.

\begin{cor}\label{spc.c.sa-mult}
Let $(\Phi,H)$ be a Borel functional calculus on $\C^d$ 
such that the operator $A_j := \Phi(\bfz_j)$ is
self-adjoint for each $j=1, \dots, d$.  Then 
$\Phi$ is concentrated on $\R^d$.
\end{cor}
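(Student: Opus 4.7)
My plan is to invoke Theorem~\ref{spc.t.main}(d) applied to each coordinate projection $\bfz_j$, and then use the fact that the family of $\Phi$-null sets is a $\sigma$-ideal to conclude.

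More precisely, since $A_j = \Phi(\bfz_j)$ is self-adjoint for each $j=1, \dots, d$, Theorem~\ref{spc.t.main}(d) yields $\bfz_j \in \R$ $\Phi$-almost everywhere, i.e.\ the set
\[
N_j := \{ z \in \C^d \suchthat \im(z_j) \neq 0 \}
\]
is a $\Phi$-null set. Since
\[
\C^d \ohne \R^d = \bigcup_{j=1}^d N_j
\]
is a finite union of $\Phi$-null sets, and $\calN_\Phi$ is a $\sigma$-ideal in $\Borel(\C^d)$ (as noted in Section~\ref{pvm.s.null}), it follows that $\C^d \ohne \R^d \in \calN_\Phi$. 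By definition, this means that $\Phi$ is concentrated on $\R^d$.

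There is no real obstacle here: the work has already been done in establishing Theorem~\ref{spc.t.main}(d), and the only additional observation needed is the trivial set-theoretic decomposition of $\C^d \ohne \R^d$ as a finite union of the ``non-real $j$-th coordinate'' sets. The proof is essentially a one-line application of part (d) of the main spectral theorem combined with the $\sigma$-ideal property of $\calN_\Phi$.
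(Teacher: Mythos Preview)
Your proof is correct and essentially identical to the paper's own argument: apply Theorem~\ref{spc.t.main}(d) to each coordinate function to get that $\set{\bfz_j \notin \R}$ is $\Phi$-null, then use the decomposition $\C^d \ohne \R^d = \bigcup_{j=1}^d \set{\bfz_j \notin \R}$ together with the $\sigma$-ideal property of $\calN_\Phi$.
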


\begin{proof}
By Theorem \ref{spc.t.main}, the set $\set{\bfz_j \notin \R}$ is $\Phi$-null
for each $j =1, \dots, d$. Hence, the set
$\C^d \ohne \R^d = \set{(\bfz_1, \dots, \bfz_d) \notin \R^d} =
\bigcup_{j=1}^d \set{\bfz_j \notin \R}$ is $\Phi$-null as well. 
\end{proof}

\medskip

\subsection{Multiplication Operators Revisited}\label{spc.s.mul-rev}

Let $\prO = (\Omega, \Sigma,\mu)$ be a measure space with 
associated multiplication calculus
\[ \Phi(a) := M_a \qquad (a\in \Meas(\Omega, \Sigma))
\]
as in Section \ref{s.mfc}. The measure
space $\prO$ is called {\emdf semi-finite} if
each set of infinite measure has a subset of
finite but non-zero measure.

\begin{lem}\label{spc.l.semi-finite}
Let $\prO$ be a  semi-finite measure space. 
Then for a set $A\in \Sigma$ the following assertions are equivalent:
\begin{aufzii}
\item $A$ is $\mu$-null.
\item $A$ is $\Phi$-null.
\end{aufzii}
\end{lem}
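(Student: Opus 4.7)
The plan is to prove the two directions separately, with the forward direction (i) $\Rightarrow$ (ii) being immediate and the reverse direction (ii) $\Rightarrow$ (i) requiring the semi-finiteness hypothesis.

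For (i) $\Rightarrow$ (ii): if $A$ is $\mu$-null then $\car_A = 0$ as an element of $\Ell{2}(\prO)$ (viewed modulo equality a.e.), so for every $f\in \Ell{2}(\prO)$ the product $\car_A f$ vanishes almost everywhere. Hence $M_{\car_A} = 0$, which is precisely $\Phi(\car_A) = 0$, i.e., $A \in \calN_\Phi$. This direction does not use semi-finiteness.

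For (ii) $\Rightarrow$ (i): I argue by contraposition. Suppose $\mu(A) > 0$. By semi-finiteness there exists $B \in \Sigma$ with $B \subseteq A$ and $0 < \mu(B) < \infty$. Then $\car_B \in \Ell{2}(\prO)$ is a nonzero element, and since $B \subseteq A$,
\[
M_{\car_A}\car_B = \car_A \car_B = \car_B \neq 0 \quad \text{in } \Ell{2}(\prO).
\]
Thus $\Phi(\car_A) = M_{\car_A} \neq 0$, so $A \notin \calN_\Phi$.

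The only subtlety is the role of semi-finiteness: without it, a set of infinite measure that contains no subset of finite positive measure would be invisible to $\Ell{2}$-testing, so $M_{\car_A}$ could vanish on all of $\Ell{2}(\prO)$ while $\mu(A) = \infty$. Semi-finiteness supplies the $\Ell{2}$-test function $\car_B$ that detects the set, and this is really the whole content of the implication. I expect no technical obstacles; the main point is simply to record the role played by the hypothesis.
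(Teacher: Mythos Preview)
Your proof is correct and essentially identical to the paper's: the forward direction is immediate, and for the reverse the paper argues directly (from $\Phi(\car_A)=0$ conclude $\car_B=0$ for every $B\subseteq A$ of finite measure, hence $\mu(A)=0$ by semi-finiteness) while you give the contrapositive version of the same argument. The test function $\car_B$ with $B\subseteq A$, $0<\mu(B)<\infty$, is the key in both.
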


\begin{proof}
Clearly (i) implies (ii), even if $\prO$ is not semi-finite. For the
converse
suppose that $\Phi(\car_A) = 0$. Then $\car_A f = 0$ for each  $f\in
\Ell{2}(\prO)$. Then $\car_B = 0$ for all $B \subseteq A$ with $\mu(B)
< \infty$. By semi-finiteness, this implies that $\mu(A) = 0$.\end{proof}

The following is a standard result from elementary operator
theory. Here we obtain it as a corollary of functional
calculus theory. 

\begin{cor}\label{spc.c.mult}
Let $\prO = (\Omega, \Sigma,\mu)$ be a semi-finite measure space.
Then the following assertions hold for each $a\in \Meas(\Omega, \Sigma)$.
\begin{aufzi}
\item $M_a$ is injective if and only if $\mu\set{a=0} =0$.  
In this case, $M_a^{-1} = M_{a^{-1}}$. 

\item $M_a$ is bounded if and only if $a\in \Ell{\infty}(\prO)$. In 
this case $\norm{M_a} = \norm{a}_{\Ell{\infty}}$.

\item $\spec(M_a) = \Aspec(M_a) = \essran(a)$, the essential range of $a$.

\item $\lambda \in \resol(M_a) \quad \Dann\quad
R(\lambda,M_a) = M_{(\lambda - a)^{-1}}$.

\item Up to equality $\mu$-almost everywhere, $a$ is uniquely
determined by $M_a$. 

\item $M_a$ is symmetric iff it is self-adjoint iff
$a \in \R$ almost everywhere. 
\end{aufzi} 
\end{cor}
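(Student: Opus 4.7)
The strategy is that every one of (a)--(f) is, once one knows that $\Phi(a) := M_a$ is a measurable functional calculus (Theorem~\ref{mfc.t.mul}), just an instance of the general statements in Lemma~\ref{pvm.l.null} and Theorem~\ref{spc.t.main}, transferred from ``$\Phi$-almost everywhere'' to ``$\mu$-almost everywhere'' by Lemma~\ref{spc.l.semi-finite}. So the whole proof amounts to six short translations.

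For (a), injectivity of $M_a = \Phi(a)$ is equivalent to $a \neq 0$ $\Phi$-a.e.\ by Lemma~\ref{pvm.l.null}.d, which by semi-finiteness means $\mu\set{a = 0} = 0$. Under this hypothesis, $a^{-1}$ (extended arbitrarily on the null set $\set{a=0}$) satisfies $a \cdot a^{-1} = \car$ $\mu$-a.e., hence $\Phi$-a.e., and Theorem~\ref{mfc.t.prop1}.c gives $M_a^{-1} = M_{a^{-1}}$. For (b), Theorem~\ref{spc.t.main}.c says $\|\Phi(a)\| \le c$ iff $|a| \le c$ $\Phi$-a.e., and again Lemma~\ref{spc.l.semi-finite} converts this to $|a| \le c$ $\mu$-a.e.; optimizing $c$ yields $\|M_a\| = \|a\|_{\Ell{\infty}}$. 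Assertion (c) is nothing but Theorem~\ref{spc.t.main}.a, together with the observation that $\essran_\Phi(a) = \essran(a)$ thanks to Lemma~\ref{spc.l.semi-finite}. For (d), once $\lambda \in \resol(M_a)$ one has $\lambda - a \neq 0$ $\mu$-a.e., so $M_{(\lambda - a)^{-1}} = M_{\lambda - a}^{-1} = (\lambda - M_a)^{-1}$ by (a) applied to $\lambda - a$. Assertion (e) is Lemma~\ref{pvm.l.null}.c combined with Lemma~\ref{spc.l.semi-finite}.

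The only point that needs a brief extra word is (f): Theorem~\ref{spc.t.main}.d directly gives the equivalence between self-adjointness of $M_a$ and $a \in \R$ $\Phi$-a.e., i.e.\ $\mu$-a.e. For the symmetric case one uses that $M_a$ is automatically normal (Theorem~\ref{mfc.t.prop2}.d), and a closed normal symmetric operator is self-adjoint: indeed $M_a \subseteq M_a^* = M_{\konj a}$, and comparing domains via Theorem~\ref{mfc.t.prop2}.d shows $M_a = M_a^*$. There is no real obstacle here; the slight care needed is simply to keep straight that the functional-calculus null sets and the measure-theoretic null sets coincide, which is exactly what semi-finiteness secures via Lemma~\ref{spc.l.semi-finite}.
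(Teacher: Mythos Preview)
Your proposal is correct and takes exactly the approach the paper intends: the paper gives no explicit proof of this corollary, treating it as an immediate consequence of the general functional-calculus results (Theorem~\ref{mfc.t.mul}, Lemma~\ref{pvm.l.null}, Theorem~\ref{spc.t.main}) combined with the identification of $\Phi$-null and $\mu$-null sets via Lemma~\ref{spc.l.semi-finite}. Your write-up simply spells out the translations the paper leaves implicit, including the small extra argument for symmetry in (f).
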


We have seen that the standard spectral properties of
multiplication operators are just special cases of 
the spectral properties  of measurable functional calculi. 
On the other hand, one can derive properties of 
measurable functional calculi from properties 
of multiplication operators. This is due to the following theorem,
which is stated here for the sake of completeness, but will not be
used at any point in following sections.

\begin{thm}\label{spc.t.mult-rep}
Let $(\Phi, H)$ be a measurable calculus on the
measurable space $(X, \Sigma)$. Then there
exists a semi-finite measure space $(\Omega, \calF, \mu)$, 
a unitary operator $U: H \to \Ell{2}(\Omega,\calF, \mu)$ and
an injective $*$-homomorphism  $T: \Meas(X, \Sigma) \to 
\Meas(\Omega, \calF)$ with 
\[  \Phi(f) = UM_{Tf}U^{-1} \qquad \text{for all $f\in \Meas(X, \Sigma)$.}
\]
Moreover,  $T$ is continuous with respect to pointwise convergence of sequences.  
\end{thm}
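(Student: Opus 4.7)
The plan is to mimic the classical cyclic-subspace decomposition for $C^*$-algebras, applied to the bounded part of $\Phi$, and then to extend the resulting intertwining identity from bounded to arbitrary measurable functions by the algebraic regularization already used in Section \ref{mfc.s.det}.

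By Theorem \ref{mfc.t.prop2}.f, the restriction $\Phi: \BMb(X,\Sigma) \to \BL(H)$ is a unital $*$-homomorphism. Apply Zorn's Lemma to obtain a maximal family $(x_\alpha)_{\alpha \in I}$ of unit vectors in $H$ whose cyclic subspaces $H_\alpha := \cls{\Phi(\BMb(X,\Sigma)) x_\alpha}$ are pairwise orthogonal. Each $H_\alpha$ is invariant under $\Phi(\BMb)$, and since $\Phi(\konj{f}) = \Phi(f)^*$ for $f \in \BMb$, also reducing; maximality then forces $H = \bigoplus_{\alpha \in I} H_\alpha$. For each $\alpha$ set
\[ \mu_\alpha(B) := \dprod{\Phi(\car_B) x_\alpha}{x_\alpha} \qquad (B \in \Sigma), \]
a finite measure on $(X,\Sigma)$ by the PVM properties of $\Ee_\Phi$. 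By Theorem \ref{mfc.t.prop2}.d,
\[ \norm{\Phi(f) x_\alpha}^2 = \dprod{\Phi(\abs{f}^2) x_\alpha}{x_\alpha} = \int_X \abs{f}^2 \, \mathrm{d}\mu_\alpha \qquad (f \in \BMb(X,\Sigma)), \]
so $U_\alpha: \Phi(f) x_\alpha \mapsto f$ is a well-defined isometry on a dense subspace of $H_\alpha$. Since $\BMb(X,\Sigma)$ is dense in $\Ell{2}(X, \mu_\alpha)$ (as $\mu_\alpha$ is finite), $U_\alpha$ extends uniquely to a unitary $H_\alpha \to \Ell{2}(X, \mu_\alpha)$ that intertwines $\Phi(f)\res{H_\alpha}$ with the multiplication operator $M_f$ for every $f \in \BMb(X, \Sigma)$.

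Now let $\Omega := X \times I$, let $\calF$ be the $\sigma$-algebra of all $A \subseteq \Omega$ with $A^\alpha := \{x \in X \suchthat (x,\alpha) \in A\} \in \Sigma$ for every $\alpha \in I$, and set $\mu(A) := \sum_{\alpha \in I} \mu_\alpha(A^\alpha)$. Since each $\mu_\alpha$ is finite, $\mu$ is semi-finite. The obvious unitary identification $\Ell{2}(\Omega, \mu) \cong \bigoplus_\alpha \Ell{2}(X, \mu_\alpha)$ turns $U := \bigoplus_\alpha U_\alpha$ into a unitary $H \to \Ell{2}(\Omega, \mu)$. The map $T: \Meas(X,\Sigma) \to \Meas(\Omega, \calF)$ defined by $(Tf)(x, \alpha) := f(x)$ is then an injective $*$-homomorphism that preserves pointwise convergence of sequences.

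The principal obstacle is upgrading the intertwining $\Phi(f) = UM_{Tf}U^{-1}$ from $f \in \BMb$ (where it holds on each $H_\alpha$ by construction and extends by density) to arbitrary $f \in \Meas(X,\Sigma)$. Observe that the projection $P_\alpha$ onto $H_\alpha$ commutes with every element of $\Phi(\BMb)$, so by Lemma \ref{mfc.l.prt} each $H_\alpha$ is invariant under every $\Phi(f)$, $f \in \Meas(X, \Sigma)$. Taking the regularizer $e := (1 + \abs{f})^{-1}$ of Section \ref{mfc.s.det}, one has $\Phi(f) = \Phi(e)^{-1} \Phi(ef)$; on the multiplication side, $M_{Tf} = M_{Te}^{-1} M_{T(ef)}$ by Corollary \ref{spc.c.mult}.a, since $Te > 0$ pointwise and $T$ is multiplicative. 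Both $e$ and $ef$ being bounded, the already-established bounded case supplies $\Phi(e) = U M_{Te} U^{-1}$ and $\Phi(ef) = U M_{T(ef)} U^{-1}$, from which the identity $\Phi(f) = UM_{Tf}U^{-1}$ follows on all of $H$.
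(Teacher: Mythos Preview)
Your proof is correct and follows essentially the same route as the paper's: the cyclic-subspace decomposition via Zorn's lemma, the disjoint-union measure space $\Omega = X \times I$ with $(Tf)(x,\alpha) = f(x)$, and the extension from bounded to arbitrary $f$ by the regularizer $e = (1+|f|)^{-1}$ of Section~\ref{mfc.s.det}. The only cosmetic differences are that the paper writes the unitary in the opposite direction ($V_x: f \mapsto \Phi(f)x$) and defers the unbounded extension to a one-line reference to Section~\ref{mfc.s.det}/Lemma~\ref{uni.l.bdd}, whereas you spell out the regularizer computation explicitly; your remark about $P_\alpha$ and Lemma~\ref{mfc.l.prt} is harmless but not actually needed for that final step.
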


\begin{proof}
The proof follows well-known lines, so we only sketch it.
Details can be found in many books, e.g. in \cite[VII]{ReedSimon1}, 
or \cite[Appendix D]{HaaseFC}. 

For each $x\in H$ let $\mu_x$ be the measure  on $(X, \Sigma)$
defined  by 
\[ \mu_x(A) = \dprod{\Phi(\car_A)x}{x} \qquad (A\in \Sigma).
\]
Then
\[   \dprod{\Phi(f)x}{x} = \int_X f\, \ud{\mu_x} \qquad
(f\in \BMb(X, \Sigma)).
\]
Define 
\[ Z_x := \cls{\{ \Phi(f)x \suchthat f\in \BMb(X, \Sigma)\}}.
\]
Then 
\[  \BMb(X, \Sigma) \to Z_x, \qquad f \mapsto \Phi(f)x
\]
is isometric with respect to $\norm{\cdot}_{\Ell{2}(\mu_x)}$ and hence
extends to a unitary operator
\[ V_x: \Ell{2}(X,\Sigma,\mu_x) \to Z_x.
\]
Both spaces $Z_x$ and $Z_x^\perp$  are $\Phi(\BMb(X, \Sigma))$-invariant.
Employing Zorn's lemma, one finds  a maximal set $(x_\alpha)_\alpha$ of 
unit vectors $x_\alpha$ in $H$ such that the spaces  
$Z_{x_\alpha}$ are pairwise orthogonal and satisfy
\[ H = \ell^2-\bigoplus_\alpha Z_{x_\alpha}. 
\]
For each $\alpha$ let $X_\alpha:= X \times \{\alpha\}$ be a copy of $X$, so that the
$X_\alpha$ are pairwise disjoint. Let $\Omega := \bigsqcup_\alpha
X_\alpha$ be their disjoint union. Let $\calF$ be the largest
$\sigma$-algebra on $\Omega$ such that all inclusion maps
$X_\alpha \to \Omega$ are measurable. Let $\mu$ be the 
measure on $\calF$ defined by 
\[ \mu(B) := \sum_\alpha \mu_{x_\alpha}(B \cap X_\alpha) \qquad (B \in \calF)
\]
and let  $K := \Ell{2}(\Omega,\calF,\mu)$. Define the unitary operator 
\[  U : H \to K
\]
such that $U^{-1} = V_{x_\alpha}$ on $\Ell{2}(X_\alpha, \Sigma, \mu_{x_\alpha})
\subseteq K$.  Then define the mapping $T: \Meas(X, \Sigma) \to \Meas(\Omega, \calF)$ by
\[ (Tf)(x, \alpha)  := f(x)
 \qquad (x\in X).
\]
Then $T$ has the desired properties. Define the measurable calculus
$(\Psi, K)$ by $\Psi(f) :=  M_{Tf}$. (That is, $\Psi$ is the pull-back
of the multplication operator calculus by $T$.) Then, by construction,
the identity
\[   M_{Tf} = U\Phi(f)U^{-1}
\]
holds for all $f\in \BMb(X, \Sigma)$. Consequently, by the remarks in Section
\ref{mfc.s.det}, it must hold for all $f\in
\Meas(X,\Sigma)$. (Cf. also Lemma \ref{uni.l.bdd} below.)
\end{proof}

\section{Uniqueness}\label{s.uni}

\medskip

\subsection{Uniqueness for Measurable Calculi}

In this section we shall establish several properties
that determine
a measurable functional calculus uniquely. The first one has already been 
mentioned in Section \ref{mfc.s.det}.

\begin{lem}\label{uni.l.bdd}
Let $(\Phi, H)$ and $(\Psi,H)$ be two measurable calculi on $(X, 
\Sigma)$ such that $\Phi(f) = \Psi(f)$ for all bounded functions $f$.
Then  $\Phi = \Psi$. 
\end{lem}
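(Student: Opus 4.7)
The plan is to exploit the regularization identity from Section \ref{mfc.s.det} to reduce the equality $\Phi(f) = \Psi(f)$ for arbitrary $f$ to the equality on bounded functions, which is given by hypothesis.

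Fix an arbitrary $f \in \Meas(X, \Sigma)$, and set $e := (1 + \abs{f})^{-1}$. Then both $e$ and $ef$ are bounded, so by hypothesis $\Phi(e) = \Psi(e)$ and $\Phi(ef) = \Psi(ef)$. Since $e$ is nowhere zero, Theorem \ref{mfc.t.prop1}.c applied to $\Phi$ (and also to $\Psi$) shows that the bounded operator $\Phi(e) = \Psi(e)$ is injective with the same inverse (as an operator on its range) in both calculi. The identity derived in Section \ref{mfc.s.det}, namely
\[
\Phi(f) = \Phi(e)^{-1} \Phi(ef),
\]
together with the analogous identity for $\Psi$, then yields
\[
\Phi(f) = \Phi(e)^{-1} \Phi(ef) = \Psi(e)^{-1} \Psi(ef) = \Psi(f),
\]
as graphs of closed operators.

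There is no real obstacle here: the only thing to be careful about is that the regularization identity is interpreted as an identity of (closed) operators, not merely on some dense subspace. But this is precisely the content of the discussion in Section \ref{mfc.s.det}, where the equivalence $\Phi(f)x = y \iff \Phi(ef)x = \Phi(e)y$ is recorded for all $x, y \in H$. Since this characterization of the graph of $\Phi(f)$ depends only on the values of the calculus on the two bounded functions $e$ and $ef$, and the same characterization applies to $\Psi(f)$, the two operators have identical graphs and hence coincide.
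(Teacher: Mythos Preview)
Your proof is correct and is essentially the same argument the paper intends: the lemma is stated without an explicit proof, since the paper notes it ``has already been mentioned in Section \ref{mfc.s.det},'' and your regularization argument via $e = (1+\abs{f})^{-1}$ is exactly the content of that section.
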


Lemma \ref{uni.l.bdd}  can  be easily refined.

\begin{prop}\label{uni.p.pvm}
Let $(\Phi, H)$ and $(\Psi,H)$ be two measurable calculi on $(X, 
\Sigma)$. If the corresponding projection-valued measures
coincide, i.e., if 
$\Ee_\Phi = \Ee_\Psi$, then $\Phi = \Psi$. 
\end{prop}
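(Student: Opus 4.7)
By Lemma \ref{uni.l.bdd}, it suffices to prove that $\Phi(f) = \Psi(f)$ for every $f \in \BMb(X,\Sigma)$. My plan is a two-step approximation: first pass from the hypothesis on characteristic functions to simple functions by linearity, then from simple to general bounded measurable functions by bp-continuity.

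First, the hypothesis $\Ee_\Phi = \Ee_\Psi$ says exactly that $\Phi(\car_B) = \Psi(\car_B)$ for every $B \in \Sigma$. Since $\Phi$ and $\Psi$ satisfy (MFC2) and both agree on $\car = \car_X$ by (MFC1), it follows immediately that $\Phi(s) = \Psi(s)$ for every $\Sigma$-simple function $s = \sum_{k=1}^n \lambda_k \car_{B_k}$ (with the $B_k$ pairwise disjoint, say). Note that the bounded operators $\Phi(s)$ and $\Psi(s)$ are genuinely equal here, not merely in the sense of containment, since a finite sum of bounded operators equals its closure.

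Next, let $f \in \BMb(X,\Sigma)$ with $\norm{f}_\infty \le M$. A standard measure-theoretic fact provides a sequence $(s_n)_n$ of simple functions with $\norm{s_n}_\infty \le M$ for all $n$ and $s_n \to f$ pointwise on $X$. Applying the strengthened weak bp-continuity axiom \textrm{(MFC5')} from Theorem \ref{mfc.t.prop1}.f to both calculi, we obtain
\[
\Phi(s_n) \to \Phi(f) \quad \text{and} \quad \Psi(s_n) \to \Psi(f)
\]
strongly on $H$. Since $\Phi(s_n) = \Psi(s_n)$ for every $n$ by the first step, the two strong limits must coincide, so $\Phi(f) = \Psi(f)$. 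This gives the agreement on $\BMb(X,\Sigma)$ required to invoke Lemma \ref{uni.l.bdd}, and we conclude $\Phi = \Psi$.

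There is no real obstacle here: once one recognizes that a PVM determines the calculus on indicator functions by definition and that $\BMb(X,\Sigma)$ is the bp-closure of the simple functions, the proposition is essentially a restatement of bp-continuity combined with linearity. The only place one must be slightly careful is in step one, where one should verify that the sums appearing really are equalities of bounded operators (and not merely containments), which follows from boundedness of all summands via Theorem \ref{mfc.t.prop1}.b.
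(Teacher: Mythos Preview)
Your proof is correct and follows essentially the same route as the paper's: agreement on characteristic functions extends by linearity to simple functions, then by (MFC5) (or (MFC5')) to all of $\BMb(X,\Sigma)$, after which Lemma \ref{uni.l.bdd} finishes the argument. The paper's version is more terse but identical in structure.
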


\begin{proof}
It follows from the hypothesis that $\Phi$ and $\Psi$ agree on 
the linear span of characteristic functions. By (MFC5), they agree
on all bounded measurable functions, hence by Lemma 
\ref{uni.l.bdd} they
must be equal.
\end{proof}

For more refined uniqueness statements we need more information about
the set of functions on which two measurable calculi agree. 
We prepare this by looking at a slightly more general 
result about intertwining operators.

\begin{thm}\label{uni.t.inter}
Let $H, K$ be Hilbert spaces, 
let $(\Phi, H)$ and $(\Psi,K)$ be two measurable calculi 
on $(X, \Sigma)$ and let $T: H \to K$ be a bounded operator. 
Then the ``intertwining set''
\[ E := E(\Phi, \Psi; T) := \{ f\in \Meas(X, \Sigma) \suchthat 
T\Phi(f) \subseteq \Psi(f)T\}
\]
has the following properties:
\begin{aufziii}
\item $E$ is a unital $*$-subalgebra of $\Meas(X, \Sigma)$.

\item If $f\in E$ and $f\neq 0$ everywhere, then $f^{-1} \in E$. 

\item $\displaystyle f\in E \quad \iff\quad
\frac{f}{1+ \abs{f}^2} ,\,\, \frac{1}{1+ \abs{f}^2} \in E$.

\item $E$ is closed under pointwise convergence of sequences.
\item If $f\in E$, then $\abs{f} \in E$. 

\item If $f,g\in E$ are real-valued, then $f\vee g,\, f\wedge g \in
  E$.
 
\item The set $\{ A\in \Sigma \suchthat \car_A \in E\}$ is
a sub-$\sigma$-algebra of $\Sigma$.

\item If $f\in E$ then $\car_{\set{f\in B}} \in E$ for
each Borel set $B \subseteq \C$.
\end{aufziii}
\end{thm}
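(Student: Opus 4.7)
The plan is to establish the claims in two stages: first for bounded functions, and then to transfer to general measurable functions via the regularization device of Section~\ref{mfc.s.det}. Setting $E_b := E \cap \BMb(X,\Sigma)$, the \textbf{bounded case} runs as follows. For $f \in \BMb$ the inclusion $T\Phi(f) \subseteq \Psi(f)T$ is an equality of bounded operators $H \to K$, so closure of $E_b$ under the unit, sums, scalar multiples, products, and bp-convergence is direct, the last using Theorem~\ref{mfc.t.prop1}.f together with the boundedness of $T$. The delicate point is closure under complex conjugation: since $\Phi(f), \Psi(f)$ are bounded normal, the Fuglede--Putnam theorem yields $T\Phi(f)^* = \Psi(f)^*T$, i.e., $T\Phi(\konj f) = \Psi(\konj f)T$. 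Thus $E_b$ is a unital $*$-subalgebra of $\BMb$ closed under bp-convergence.

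\textbf{Transfer to $\Meas$ via (iii).} Item (ii) is an elementary reversal of the intertwining using $\Phi(f)^{-1} = \Phi(f^{-1})$ on $\ran(\Phi(f))$. For (iii), set $e := 1/(1+|f|^2)$: then $e$ and $ef$ are bounded and $\Phi(f) = \Phi(e)^{-1}\Phi(ef)$ by Section~\ref{mfc.s.det}. The reverse direction is immediate: for $x = \Phi(e)u \in \ran(\Phi(e)) = \dom(\Phi(f))$ one computes $T\Phi(f)x = T\Phi(ef)u = \Psi(ef)Tu = \Psi(f)\Psi(e)Tu = \Psi(f)Tx$. For the forward direction I would first derive the \emph{unbounded} Fuglede--Putnam inclusion $T\Phi(\konj f) \subseteq \Psi(\konj f)T$ from the bounded version applied to the intertwined resolvents $TR(\lambda,\Phi(f)) = R(\lambda,\Psi(f))T$, and then use the identity $\Phi(e) = (\Id + \Phi(f)^*\Phi(f))^{-1}$ to obtain $T\Phi(e) = \Psi(e)T$, whence $T\Phi(ef) = T\Phi(f)\Phi(e) = \Psi(ef)T$.

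\textbf{Remaining items.} Items (i) and (iv) at the general level reduce to the bounded case plus (iii): for (i), the bounded regularizers of $f+g$, $fg$, $\lambda f$, $\konj f$ all lie in $E_b$ by the bounded case; for (iv), if $f_n \to f$ pointwise with each $f_n \in E$, then $f_n/(1+|f_n|^2) \to f/(1+|f|^2)$ and $1/(1+|f_n|^2) \to 1/(1+|f|^2)$ pointwise and boundedly, so bp-closure of $E_b$ and (iii) finish. For (v), in the bounded case $|f|^2 = f\konj f \in E_b$ and $|f|$ follows by uniform polynomial approximation of $\sqrt{\cdot}$ on $[0,\norm{f}_\infty^2]$; the unbounded case reduces via (iii) and the identity $|f|/(1+|f|^2) = |f/(1+|f|^2)|$. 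Item (vi) is algebraic via $f \vee g = \tfrac12(f+g+|f-g|)$; (vii) combines (i) and (iv), writing countable unions as bp-limits of finite ones. For (viii), the family $\{B \in \Borel(\C) : \car_{f^{-1}(B)} \in E\}$ is a $\sigma$-algebra by (vii), so it suffices to handle half-planes $\{\re(z) \le c\}$, which reduces to $\car_{\{u \le c\}} \in E$ for real $u \in E$: in the bounded case I would approximate $\car_{(-\infty,c]}$ by Lipschitz functions $\phi_k$ and those by polynomials in $u$, and in the unbounded case I would truncate $u_n := (u \wedge n) \vee (-n) \in E_b$ for $n > |c|$, using $\{u_n \le c\} = \{u \le c\}$.

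\textbf{Main obstacle.} The crux is the $*$-closure in the bounded case (Fuglede--Putnam for bounded normal operators) and its unbounded extension used in the forward direction of (iii); the latter reduces to the former by intertwining of resolvents. Everything downstream then follows from the standard mixture of algebra, approximation, and bp-convergence already established in Theorems~\ref{mfc.t.prop1} and~\ref{mfc.t.prop2}.
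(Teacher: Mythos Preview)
Your overall architecture (settle the bounded case first, then transfer via regularizers) matches the paper's, and most of the downstream items are handled correctly. There is, however, a genuine gap in the forward direction of~(iii), which is the hinge of your whole argument.

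You propose to derive the unbounded Fuglede--Putnam inclusion $T\Phi(\konj f)\subseteq\Psi(\konj f)T$ from the bounded version by passing to the resolvents $R(\lambda,\Phi(f))$ and $R(\lambda,\Psi(f))$. But for a normal operator the resolvent set can be empty: take $\Phi=\Psi$ to be the multiplication calculus on $L^2(\C,\mu)$ for a measure $\mu$ with $\supp(\mu)=\C$, and $f=\bfz$. Then $\spec(\Phi(f))=\C$, and there is no $\lambda$ at which the resolvent intertwining can even be written down. Since forward~(iii) is what you use to establish the unbounded part of~(i) (in particular closure under $f\mapsto\konj f$), this gap propagates to everything built on it.

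The paper avoids this by a different device: instead of changing the function, it changes the intertwiner. Setting $A_n:=\{\,|f|+|g|\le n\,\}$ and $T_n:=\Psi(\car_{A_n})T\Phi(\car_{A_n})$, one checks the elementary implication
\[
f\in E(\Phi,\Psi;T)\ \Longrightarrow\ f\car_{A_n}\in E(\Phi,\Psi;T_n)\ \Longrightarrow\ f\in E(\Phi,\Psi;T_n)
\]
(Lemma~\ref{uni.l.inter-aux}). Now $f\car_{A_n},g\car_{A_n}$ are bounded, so the bounded case (including bounded Fuglede) gives $f+g,\,fg,\,\konj f\in E(\Phi,\Psi;T_n)$; finally $T_n\to T$ strongly and $\Psi(h)$ is closed, yielding $T\Phi(h)\subseteq\Psi(h)T$. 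This truncation step is precisely the missing ingredient in your proof.

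A smaller point: in your backward direction of~(iii) you write $x=\Phi(e)u$ for $x\in\dom(\Phi(f))$, but $\ran(\Phi(e))=\dom(\Phi(|f|^2))$ is in general a proper subspace of $\dom(\Phi(f))$. This is easily repaired (either argue directly with $\Psi(e)^{-1}\Psi(ef)=\Psi(f)$, or note that $\dom(\Phi(|f|^2))$ is a core for $\Phi(f)$ and take closures), but as written it is incomplete.
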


In order to streamline the proof, we single out a lemma first.

\begin{lem}\label{uni.l.inter-aux}
In the situation of Theorem \ref{uni.t.inter},
let $f\in \Meas(X, \Sigma)$ and $A\in \Sigma$, and define
$T_A := \Psi(\car_A)T\Phi(\car_A) \in \BL(H,K)$.
Then
\[ f\in E(\Phi, \Psi; T)\,\, \dann
\,\, f\car_A \in E(\Phi, \Psi, T_A)\,\, \dann\,\,
f \in E(\Phi, \Psi, T_A). 
\]
\end{lem}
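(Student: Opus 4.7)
The plan is to prove both implications by direct calculation, resting on three basic facts. First, since both $\Phi(\car_A)$ and $\Psi(\car_A)$ are idempotents, $T_A \Phi(\car_A) = T_A = \Psi(\car_A) T_A$. Second, since $\Phi(\car_A)$ and $\Psi(\car_A)$ are bounded, Theorem \ref{mfc.t.prop1}.b yields the operator equalities $\Phi(f)\Phi(\car_A) = \Phi(f\car_A)$ and $\Psi(f)\Psi(\car_A) = \Psi(f\car_A)$, together with the commutation inclusions $\Phi(\car_A)\Phi(f) \subseteq \Phi(f)\Phi(\car_A)$ and $\Psi(\car_A)\Psi(f) \subseteq \Psi(f)\Psi(\car_A)$. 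Third, Corollary \ref{mfc.c.prop1}.a gives $\dom(\Phi(f)) \subseteq \dom(\Phi(f\car_A))$, and similarly for $\Psi$.

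For the first implication I would take $x \in \dom(\Phi(f\car_A))$ and expand
\[
T_A \Phi(f\car_A) x = \Psi(\car_A) T \Phi(\car_A) \Phi(f\car_A) x = \Psi(\car_A) T \Phi(f) \Phi(\car_A) x,
\]
the latter equality using $\Phi(\car_A)\Phi(f\car_A) = \Phi(f\car_A)$ and $\Phi(f\car_A) = \Phi(f)\Phi(\car_A)$. Membership $x \in \dom(\Phi(f\car_A))$ is exactly the assertion $\Phi(\car_A)x \in \dom(\Phi(f))$, so the hypothesis $f\in E(\Phi,\Psi;T)$ applies and yields $T\Phi(f)\Phi(\car_A)x = \Psi(f) T\Phi(\car_A)x$. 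Moving $\Psi(\car_A)$ inside via the commutation inclusion and then using $\Psi(\car_A)\Psi(f) \subseteq \Psi(f\car_A)$ gives
\[
T_A \Phi(f\car_A) x = \Psi(\car_A)\Psi(f) T\Phi(\car_A) x = \Psi(f\car_A) T\Phi(\car_A) x = \Psi(f\car_A) T_A x,
\]
which is the desired intertwining.

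For the second implication I would start from $x \in \dom(\Phi(f))$; by the third fact, $x \in \dom(\Phi(f\car_A))$, so the hypothesis gives $T_A \Phi(f\car_A) x = \Psi(f\car_A) T_A x$ and in particular $T_A x \in \dom(\Psi(f\car_A))$. The key observation is that $T_A x \in \ran(\Psi(\car_A))$, so $\Psi(\car_A) T_A x = T_A x$; combined with the identity $\dom(\Psi(f\car_A)) = \{y : \Psi(\car_A) y \in \dom(\Psi(f))\}$ coming from $\Psi(f)\Psi(\car_A) = \Psi(f\car_A)$, this forces $T_A x \in \dom(\Psi(f))$ and $\Psi(f\car_A) T_A x = \Psi(f)\Psi(\car_A) T_A x = \Psi(f) T_A x$. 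A short calculation mirroring the one above, using $\Phi(\car_A)\Phi(f)x = \Phi(f\car_A)x$, yields $T_A \Phi(f) x = T_A \Phi(f\car_A) x$, closing the argument.

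The main obstacle is bookkeeping of domains: every step must be checked against the possibility that some unbounded operator is applied to a vector outside its domain. In particular, the passage ``$T_A x \in \dom(\Psi(f\car_A)) \Rightarrow T_A x \in \dom(\Psi(f))$'' would fail for a general $y \in K$ and is available only because $T_A x$ lies in the range of the projection $\Psi(\car_A)$; without this observation the second implication does not go through.
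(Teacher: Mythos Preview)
Your proof is correct and follows essentially the same route as the paper's. The only difference is presentational: the paper works throughout with operator inclusions (graph containments) rather than elementwise, which absorbs all the domain bookkeeping you carry out by hand. In particular, your ``main obstacle'' in the second implication---that one needs $T_A x \in \ran(\Psi(\car_A))$ to pass from $\dom(\Psi(f\car_A))$ to $\dom(\Psi(f))$---is handled in the paper by the single operator identity $\Psi(\car_A f)T_A = \Psi(f)\Psi(\car_A)T_A = \Psi(f)T_A$, which holds because $\Psi(\car_A)T_A = T_A$ (idempotence) and $\Psi(f)\Psi(\car_A) = \Psi(f\car_A)$ (Theorem~\ref{mfc.t.prop1}.b). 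So the observation you flag as delicate is not wrong, but it is automatic once one computes at the level of operators rather than vectors.
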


\begin{proof}
Abbreviate $P= \Phi(\car_C)$ and $Q := \Psi(\car_C)$.
Suppose that $T\Phi(f)\subseteq \Phi(f)T$. Then
\begin{align*}
 T_A\Phi(\car_A f) & = QT \Phi(\car_A) \Phi(\car_A f)
\subseteq  QT \Phi(\car_A f) = Q T \Phi(f)\Phi(\car_A)
\\ & \subseteq Q \Psi(f) T P \subseteq \Psi(f) Q TP
= \Psi(f\car_A) T_A.
\end{align*}
This proves the first implication. Next, suppose
$T_A \Phi(\car_A f) \subseteq \Psi(\car_A f) T_A$.
Then
\begin{align*}
T_A \Phi(f) = T_A \Phi(\car_A) \Phi(f)
\subseteq T_A \Phi(\car_A f) \subseteq
\Psi(\car_A f) T_A = 
\Psi(f) \Psi(\car_A) T_A = \Psi(f) T_A,
\end{align*}
which proves the second implication.
\end{proof}

Next we recall the following important result of Fuglede.

\begin{thm}[Fuglede]\label{uni.t.fuglede}
Let $A,B$ be normal operators on the complex Hilbert
spaces $H,K$, respectively, and let $T: H \to K$
be a bounded operator such that $TA\subseteq BT$. 
Then $TA^* \subseteq B^*T$. 
\end{thm}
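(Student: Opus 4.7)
My plan is to prove the bounded version of Fuglede's theorem by the classical complex-analytic Liouville argument, and then to reduce the general (possibly unbounded) case to the bounded one by passing to resolvents.

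\textbf{Bounded case.} Suppose $A, B$ are bounded normal operators with $TA = BT$. Induction yields $TA^n = B^n T$ for every $n \in \N$, hence summing the exponential series gives $T \expo{iz A} = \expo{iz B} T$ for every $z \in \C$. Define the entire $\BL(H, K)$-valued function
\[ F(z) := \expo{-iz B^*}\, T\, \expo{iz A^*}. \]
Using the commutations $AA^* = A^*A$ and $BB^* = B^*B$ (normality) together with the identity $\expo{i\bar z B}\, T = T\, \expo{i\bar z A}$ (which follows from the exponential intertwining after replacing $z$ by $\bar z$), one rewrites
\[ F(z) = \expo{-i(zB^* + \bar z B)}\, T\, \expo{i(zA^* + \bar z A)} = U_B(z)\, T\, U_A(z), \]
where $U_A(z) := \expo{i(zA^* + \bar z A)}$ and $U_B(z) := \expo{-i(zB^* + \bar z B)}$ are unitary since their generators are $i$ times self-adjoint operators. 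Hence $\norm{F(z)} \le \norm{T}$ uniformly in $z \in \C$. For each $x \in H$ and $y \in K$, the scalar entire function $z \mapsto \dprod{F(z)x}{y}$ is bounded, hence constant by Liouville's theorem; thus $F \equiv F(0) = T$. Differentiating at $z = 0$ yields $TA^* = B^* T$.

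\textbf{Reduction to the bounded case.} Choose $\lambda \in \resol(A) \cap \resol(B)$ and set $R_A := R(\lambda, A) \in \BL(H)$, $R_B := R(\lambda, B) \in \BL(K)$. Since $A$ is normal, $R_A$ is bounded normal with $R_A^* = R(\bar\lambda, A^*)$, and similarly for $R_B$. The hypothesis $TA \subseteq BT$ translates into $T R_A = R_B T$: for $z \in H$, setting $x := R_A z$ gives $(x, \lambda x - z) \in A$, so $(Tx, \lambda Tx - Tz) \in B$, i.e., $R_B T z = Tx = T R_A z$. Applying the bounded case to $R_A, R_B$ yields $T R_A^* = R_B^* T$, that is, $T R(\bar\lambda, A^*) = R(\bar\lambda, B^*) T$. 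The same manipulation in reverse---for $(x, y) \in A^*$, write $x = R(\bar\lambda, A^*)(\bar\lambda x - y)$---then yields $TA^* \subseteq B^* T$.

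The main obstacle is the Liouville step, specifically the rewriting of $F(z)$ as a sandwich between unitaries. This depends crucially on normality: without $AA^* = A^*A$, the exponent $i(zA^* + \bar z A)$ would not split into commuting summands $izA^*$ and $i\bar z A$, so the exponential would not factor as $\expo{izA^*}\expo{i\bar z A}$, and the desired uniform boundedness of $F(z)$ would fail. Everything else is straightforward algebraic manipulation with the resolvent identity.
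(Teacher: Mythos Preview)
Your bounded-case argument is correct and is precisely the Rosenblum--Putnam proof that the paper cites (it does not reprove the bounded case but refers to \cite{Rosenblum1958} and \cite[12.16]{RudinFA}); the Liouville step and the unitary-sandwich rewriting are fine.

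The reduction to the bounded case, however, has a genuine gap: you begin with ``Choose $\lambda \in \resol(A) \cap \resol(B)$'', but for an unbounded normal operator the resolvent set can be empty. A concrete example is $A = M_{\bfz}$, multiplication by the identity function on $\Ell{2}(\C, e^{-\abs{z}^2}\,\mathrm{d}A)$: this operator is normal with $A^* = M_{\konj{\bfz}}$, and its spectrum is the essential range of $\bfz$, which is all of $\C$. So your resolvent argument cannot even start in this situation, and the proof of the general statement is incomplete.

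The paper does not give a self-contained proof of the unbounded case either; it cites Fuglede's original article and remarks (Remark~\ref{uni.r.inter}.2) that the full statement follows a posteriori from the spectral theorem together with Theorem~\ref{uni.t.inter}. If you want a direct reduction in the spirit of your attempt, you must replace the resolvent by an object that always exists for normal operators---for instance the pair $T_A = (1+A^*A)^{-1}$, $S_A = A T_A$ used in Section~\ref{s.spt}, or the bounded transform $Z_A$. Showing that $TA \subseteq BT$ forces $T$ to intertwine these bounded normal operators, and then running your bounded-case argument on them, would close the gap.
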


This theorem has been established in \cite{Fuglede1950}
(for the case $H=K$ and $A= B$, 
but the proof in general situation is not much different).
In the following we shall only employ the 
theorem in  the case that $A$ and $B$ are bounded.
For this, an elegant proof (based
on the power series functional calculus) has
been given by Rosenblum and Putnam, see \cite{Rosenblum1958}
and \cite[12.16]{RudinFA}. Cf. also 
Remark \ref{uni.r.inter}.2) below.

\medskip

We are now prepared to give the

\begin{proof}[Proof of Theorem \ref{uni.t.inter}]
1) We first note that the set $E \cap \BMb(X, \Sigma)$ is
a unital $*$-subalgebra of $\BMb(X, \Sigma)$, closed under 
bp-convergence. The closedness under
conjugation follows, in the case $\K = \C$, from (the bounded operator
version of) Fuglede's theorem. In the case $\K = \R$
it is trivial.

Now let $f,g \in E$ be arbitrary. 
For $n\in \N$ define $A_n :=\set{ \abs{f} + \abs{g} \le n}$ and 
$T_n:= T_{A_n} = \Psi(\car_{A_n}) T\Phi(\car_{A_n})$. 
Then by Lemma \ref{uni.l.inter-aux}, $f\car_{A_n}, g\car_{A_n} 
\in E(\Phi, \Psi, T_n)$. By what we have just observed,
this implies that
\[  (f+g)\car_{A_n}, (fg)\car_{A_n}, \konj{f}\car_{A_n}  \in E(\Phi,\Psi;T_n).
\]
By another application of Lemma \ref{uni.l.inter-aux} we obtain
\[ f+g, fg, \konj{f} \in E(\Phi, \Psi, T_n).
\]
Let $h$ be any one of the functions $f{+}g, fg, \konj{f}$.
Then, by what we have shown so far,
\[ T_n \Phi(h) \subseteq \Psi(h) T_n.
\]
But $T_n \to T$ strongly and hence, since $\Psi(h)$ is closed,
$T\Phi(h) \subseteq \Psi(h)T$ as desired.
This concludes the proof of 1). 
  
\prfnoi
2)\ Take $f\in E$ with  $f\neq 0$ everywhere. Then
$\Phi(f^{-1}) = \Phi(f)^{-1}$ and hence for $x,y\in H$ we have
\[ \Phi(f^{-1})x= y
\iff \Phi(f)y = x \dann
\Psi(f)Ty = Tx \iff \Psi(f^{-1})Tx = Ty.
\]
This proves the claim. 

\prfnoi
3)\  follows from 1) and 2) since $\abs{f}^2 = f \konj{f}$ and 
$f = \frac{f}{1+ \abs{f}^2} \cdot \bigl( \frac{1}{1+
  \abs{f}^2}\bigr)^{-1}$.

\prfnoi 
4)\  Suppose that $f_n \in E$ and $f_n \to f$ pointwise. 
If $(f_n)_n$ is uniformly bounded, then $f\in E$ by (MFC5). 
In the general case, note that 
\[ \frac{f_n}{1 + \abs{f_n}^2} \to \frac{f}{1 + \abs{f}^2}
\quad \text{and}\quad  
\frac{1}{1 + \abs{f_n}^2} \to \frac{1}{1 + \abs{f}^2}
\]
pointwise and boundedly. The claim now follows from 3).

\prfnoi
5)\ Since, by 1),  $E\cap \BMb(X, \Sigma)$ is a norm-closed
$*$-subalgebra of $\BMb(X, \Sigma)$,
it follows by standard arguments 
(as for instance in the proof of the
Stone--Weierstra\ss{} theorem) that 
if $f\in E$ is bounded, then $\abs{f} \in E$. 
For general $f \in E$ approximate 
$f_n := f\car_{\set{\abs{f}\le n}}
\to f$ pointwise.

\prfnoi
6)\ This follows from 5).

\prfnoi
7)\  Let $\calE := \{ A\in \Sigma \suchthat \car_A \in E\}$.
It is clear that $X\in \calE$ and $\calE$ is closed under 
taking complements and disjoint countable unions
(by (MFC5)).  
Also, $\calE$ is stable under taking finite intersections
and unions, by 6).

\prfnoi
8)\ By 7) it suffices to prove the assertion for $B$ being
any ball $B = \Ball(\lambda, \veps)$. Replacing $f$ by 
$\abs{f- \lambda\car}$ (which is possible by 1) and 5))
we may suppose that $f$ is real-valued  and $B = (-\infty, \veps)$. 
Now 
\[ \car_{\set{f< \veps}} = \lim_n n (\veps - f)^+ \wedge \car
\]
pointwise, and the claim follows from 4).    
\end{proof}

From Theorem \ref{uni.t.inter} we obtain the following
information about the coincidence set of
two calculi.

\begin{cor}\label{uni.c.coin}
Let $(\Phi, H)$ and $(\Psi,H)$ be two measurable calculi on $(X, 
\Sigma)$. Then the coincidence set
\[ E := \{ f\in \Meas(X, \Sigma) \suchthat 
\Phi(f) = \Psi(f)\}
\]
has the properties {\rm 1)--8)} listed in Theorem \ref{uni.t.inter}.
\end{cor}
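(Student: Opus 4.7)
The plan is to reduce the statement to a double application of Theorem \ref{uni.t.inter}. Take $K := H$ and $T := \Id_H$, which is trivially bounded. Then
\[ E(\Phi, \Psi; \Id) = \{ f\in \Meas(X, \Sigma) \suchthat \Phi(f) \subseteq \Psi(f)\}
\]
and, symmetrically,
\[ E(\Psi, \Phi; \Id) = \{ f\in \Meas(X, \Sigma) \suchthat \Psi(f) \subseteq \Phi(f)\}.
\]
Since equality of closed operators is equivalent to mutual inclusion of their graphs, the coincidence set satisfies
\[ E = E(\Phi, \Psi; \Id) \cap E(\Psi, \Phi; \Id).
\]

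By Theorem \ref{uni.t.inter}, both sets on the right-hand side enjoy properties 1)--8). It therefore suffices to argue that each of these properties is preserved under intersection of two subsets of $\Meas(X, \Sigma)$. Properties 1), 4), 5), 6), 7) and the implication in 2) are stability conditions of the form ``if certain elements lie in $E$, then so does a constructed one'', and such conditions are plainly preserved under intersection. Property 8) is of the same kind. For property 3), which is an equivalence, observe that for any $f\in \Meas(X,\Sigma)$,
\[ f \in E_1 \cap E_2 \,\iff\, f\in E_1 \,\,\text{and}\,\, f\in E_2,
\]
and by applying 3) to each $E_i$ this is equivalent to $\tfrac{f}{1+\abs{f}^2}$ and $\tfrac{1}{1+\abs{f}^2}$ lying in both $E_1$ and $E_2$, i.e., in $E_1 \cap E_2$.

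There is essentially no obstacle: the only point requiring momentary care is that one has to apply Theorem \ref{uni.t.inter} in both directions simultaneously, because the intertwining formulation involves only a one-sided inclusion $T\Phi(f) \subseteq \Psi(f)T$, whereas the coincidence set requires equality. Once this is noted, the result is immediate.
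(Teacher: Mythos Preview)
Your proof is correct and follows the same approach as the paper: apply Theorem \ref{uni.t.inter} with $K=H$ and $T=\Id$, and observe that $E = E(\Phi,\Psi;\Id) \cap E(\Psi,\Phi;\Id)$. The paper's proof is more terse and leaves the preservation of properties 1)--8) under intersection implicit, whereas you spell it out (including the small subtlety for property~3)); this extra care is fine but not strictly needed.
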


\begin{proof}
Apply Theorem \ref{uni.t.inter} with $H = K$ and note that
\[ \Phi(f) = \Psi(f) \iff f \in E(\Phi, \Psi; \Id) 
\cap E(\Psi, \Phi; \Id).\qedhere
\] 
\end{proof}

\begin{rems}\label{uni.r.inter}
\begin{aufziii}
\item 
A more diligent reasoning in the proof of Theorem \ref{uni.t.inter} 
would show that  7) is a more or less direct 
consequence solely of the axioms (PVM1)--(PVM3)
for projection-valued measures.

\item 
Fuglede's theorem (Theorem \ref{uni.t.fuglede}) is
a corollary of assertion 1) of Theorem \ref{uni.t.inter}, given the
spectral theorem.  Actually, 
Lemma \ref{uni.l.inter-aux} and its
use in the proof of Theorem \ref{uni.t.inter} are inspired 
by Fuglede's original article \cite{Fuglede1950},
see also \cite{Rosenblum1958}.

\item  On the other hand, the bounded operator version of Fuglede's
  theorem  was used in the proof of Theorem \ref{uni.t.inter}. 
However, for the special case 
$H = K$ and $T = \Id$, i.e., for Corollary \ref{uni.c.coin}, Fuglede's
theorem is not needed. Indeed, 
the implication
\[ \Phi(f) = \Psi(f) \quad \dann\quad
\Phi(\konj{f})= \Psi(\konj{f})
\]
follows directly from (MFC4). 
\end{aufziii}
\end{rems}

\medskip

\subsection{Uniqueness for Borel Calculi}

Now  we confine ourselves to Borel functional
calculi, more precisely  to calculi on subsets $X$ of $\K^d$ endowed
with the trace of the Borel algebra. We denote by $\bfz_j  = (z
\mapsto z_j): X \to \K$, $j = 1, \dots, d$, the coordinate
projections,
and $\bfz := (\bfz_1, \dots, \bfz_d): X \to \K^d$ the inclusion
mapping.

\begin{lem}\label{uni.l.Cd}
Let  $E$ be a subset of $\Meas(\K^d)$ that satisfies 
the properties {\rm 1)--8)} listed in Theorem 
\ref{uni.t.inter}. Then the following assertions are equivalent.
\begin{aufzii}
\item $E= \Meas(X, \Sigma)$;
\item $\bfz_1, \dots,  \bfz_d \in E$;
\item $\displaystyle  
\frac{\bfz_j}{1 + \abs{\bfz}^2} \in E \quad(j=1, \dots, d)\quad
\text{and}
\quad \frac{1}{1 + \abs{\bfz}^2} \in E$;
\item $\displaystyle  
\frac{\bfz_j}{(1 + \abs{\bfz}^2)^{\frac{1}{2}}} \in E\quad(j=1, \dots, d)$.
\end{aufzii}
\end{lem}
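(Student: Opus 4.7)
The plan is to prove the cycle $(i) \Rightarrow (ii) \Rightarrow (iii) \Rightarrow (iv) \Rightarrow (ii) \Rightarrow (i)$. The implication $(i) \Rightarrow (ii)$ is trivial since the coordinate projections are measurable.

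A recurring tool throughout will be the following \emph{Borel calculus principle}: if $h \in E$ and $\psi \colon \K \to \K$ is a Borel function, then $\psi \nach h \in E$. This is obtained by combining properties 1), 4), 7), 8) in a standard way. Property 8) gives $\car_{\set{h\in B}} \in E$ for each Borel $B \subseteq \K$; property 7) assembles these sets into a $\sigma$-algebra, and together with property 1) this puts all $\K$-valued simple Borel functions of $h$ into $E$; property 4) then extends to all bounded Borel $\psi$ by monotone/pointwise approximation, and finally to arbitrary Borel $\psi$ by applying the truncation $\psi_n := \psi\,\car_{\set{\abs{\psi}\le n}}$ and property 4) once more.

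With this principle in hand, the algebraic implications are routine. For $(ii) \Rightarrow (iii)$, the $*$-algebra structure of property 1) gives $1 + \abs{\bfz}^2 = 1 + \sum_j \bfz_j\,\konj{\bfz_j} \in E$; since this is nowhere zero, property 2) inverts it, and one more multiplication yields $\bfz_j/(1+\abs{\bfz}^2) \in E$. For $(iii) \Rightarrow (iv)$, apply the Borel calculus principle to $1/(1+\abs{\bfz}^2) \in E$ with $\psi(t) = t^{1/2}$ to obtain $1/(1+\abs{\bfz}^2)^{1/2} \in E$; property 2) then produces $(1+\abs{\bfz}^2)^{1/2} \in E$, and multiplication concludes $\bfz_j/(1+\abs{\bfz}^2)^{1/2} \in E$. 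For $(iv) \Rightarrow (ii)$, from $g_j := \bfz_j/(1+\abs{\bfz}^2)^{1/2} \in E$ one forms $\sum_j \abs{g_j}^2 = 1 - 1/(1+\abs{\bfz}^2) \in E$, extracts $(1+\abs{\bfz}^2)^{1/2} \in E$ as before, and reads off $\bfz_j = g_j \cdot (1+\abs{\bfz}^2)^{1/2} \in E$.

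Finally, for $(ii) \Rightarrow (i)$, the $\sigma$-algebra $\calE := \{A \in \Sigma \suchthat \car_A \in E\}$ from property 7) contains $\set{\bfz_j \in B}$ for every $j$ and every Borel $B \subseteq \K$, by property 8). Since the traces on $X$ of such cylinder sets generate $\Sigma$, we get $\calE = \Sigma$. Property 1) then puts every simple $\Sigma$-measurable function into $E$; property 4) extends to all bounded measurable functions by pointwise approximation; and the truncation $f_n := f\,\car_{\set{\abs{f}\le n}} \to f$ combined once more with property 4) covers arbitrary measurable $f$. The main obstacle I anticipate is producing expressions like $(1+\abs{\bfz}^2)^{1/2}$ from what look at first sight like purely algebraic hypotheses; isolating the Borel calculus principle at the outset is exactly what handles this cleanly.
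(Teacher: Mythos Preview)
Your proof is correct. The Borel calculus principle you isolate is valid and does exactly the work you claim for it.

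The paper's argument is organized a little differently and, in one place, more economically. For (iii)$\Rightarrow$(ii) the paper uses only the purely algebraic identity
\[
\bfz_j \;=\; \frac{\bfz_j}{1+\abs{\bfz}^2}\cdot\Bigl(\frac{1}{1+\abs{\bfz}^2}\Bigr)^{-1},
\]
invoking just properties 1) and 2) --- no square root is needed. For (iv), rather than manufacturing $(1+\abs{\bfz}^2)^{1/2}$ via your principle, the paper first extracts $\frac{1}{1+\abs{\bfz}^2}\in E$ exactly as you do, and then observes that the functions $g_j=\bfz_j/(1+\abs{\bfz}^2)^{1/2}$ themselves already generate the Borel $\sigma$-algebra on $\K^d$ (they are the coordinates of a homeomorphism onto the open unit ball), so the (ii)$\Rightarrow$(i) argument runs through verbatim with the $g_j$ in place of the $\bfz_j$. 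Thus the paper never needs a square root at all.

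What each approach buys: the paper's route is slicker for this particular lemma, since it sidesteps any analytic input beyond ``these functions generate the Borel algebra''. Your Borel calculus principle costs a few extra lines to set up but is a clean, reusable statement that would streamline any further argument where one wants $\psi\nach h\in E$; it makes transparent why analytic operations like $\sqrt{\cdot}$ pose no obstacle.
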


\begin{proof}
(ii)$\dann$(i): The coordinate projections generate the Borel algebra
on  $\K^d$. By $E$ having properties 7) and 8), 
it follows that
$\car_A \in E$ for all $A\in \Sigma$. By properties 1) and 4), 
$E= \Meas(X, \Sigma)$ as desired.

\prfnoi
(iii)$\dann$(ii): This follows from properties 1) and 2) 
and the representation
\[ \bfz_j = \frac{\bfz_j}{1 + \abs{\bfz}^2} \cdot 
\Bigl(\frac{1}{1 + \abs{\bfz}^2} \Bigr)^{-1}. 
\]
(iv)$\dann$(ii): By property 1) we obtain first
\[ \frac{\abs{\bfz_j}^2}{1 + \abs{\bfz}^2} \in E \qquad (j=1, \dots,
d).
\]
From this, one concludes $\frac{1}{1 + \abs{\bfz}^2} \in E$ and then
proceeds as in the proof of the implication (ii)$\dann$(i).
\end{proof}

\begin{thm}\label{uni.t.Cd}
Let $X\subseteq \K^d$, endowed with the trace $\sigma$-algebra.
Let $(\Phi, H)$ and $(\Psi,H)$ be two measurable calculi on $X$.
Then each of the following conditions implies that $\Phi = \Psi$.
\begin{aufziii}
\item $\Phi$ and $\Psi$ agree on the functions $\bfz_1, \dots,
  \bfz_d$;
\item $\Phi$ and $\Psi$ agree on the functions 
\[  
\frac{\bfz_j}{1 + \abs{\bfz}^2}\quad(j=1, \dots, d)\quad
\text{and}
\quad \frac{1}{1 + \abs{\bfz}^2};
\]
\item $\Phi$ and $\Psi$ agree on the functions
\[  
\frac{\bfz_j}{(1 + \abs{\bfz}^2)^{\frac{1}{2}}}\quad(j=1, \dots, d);
\]
\end{aufziii}
\end{thm}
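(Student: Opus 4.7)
The plan is to deduce the theorem immediately from the combination of Corollary \ref{uni.c.coin} and Lemma \ref{uni.l.Cd}. Consider the coincidence set
\[
E := \{ f \in \Meas(X, \Sigma) \suchthat \Phi(f) = \Psi(f) \}.
\]
By Corollary \ref{uni.c.coin}, $E$ automatically satisfies properties 1)--8) of Theorem \ref{uni.t.inter}. The three hypotheses (i), (ii), (iii) then assert exactly that $E$ contains the functions listed, respectively, in conditions (ii), (iii), (iv) of Lemma \ref{uni.l.Cd}. Applying the lemma yields $E = \Meas(X, \Sigma)$ in each case, which is precisely the assertion $\Phi = \Psi$.

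One minor wrinkle: Lemma \ref{uni.l.Cd} is phrased in terms of subsets of $\Meas(\K^d)$, whereas here the ambient space is $\Meas(X, \Sigma)$ for a Borel subset $X \subseteq \K^d$ equipped with the trace $\sigma$-algebra. Inspection of the proof of the lemma shows that only the closure properties 1)--8) of $E$ together with the fact that the coordinate projections $\bfz_j$ generate the relevant Borel algebra enter the argument; both remain valid in the trace setting, so no modification is necessary.

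I do not expect any real obstacle at this stage; all the substantive work has been done in advance. In particular, the closure of the coincidence set under complex conjugation (which in the case $\K = \C$ uses Fuglede's theorem through Theorem \ref{uni.t.inter}) and the internal reductions of Lemma \ref{uni.l.Cd} (such as the passage from $\frac{\bfz_j}{(1+\abs{\bfz}^2)^{1/2}} \in E$ to $\frac{1}{1+\abs{\bfz}^2} \in E$) have already been absorbed into the preparatory results. What remains for the proof is merely the bookkeeping step of matching the three hypotheses (i)--(iii) with the corresponding conditions of Lemma \ref{uni.l.Cd}.
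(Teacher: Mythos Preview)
Your proposal is correct and is precisely the intended argument: the paper states Theorem \ref{uni.t.Cd} without an explicit proof block, since it follows immediately from Corollary \ref{uni.c.coin} and Lemma \ref{uni.l.Cd} in exactly the way you describe. One tiny remark: your aside about Fuglede's theorem is slightly off, since Remark \ref{uni.r.inter}.3) points out that for the coincidence set (as opposed to a general intertwining set) closure under conjugation comes directly from (MFC4) and does not require Fuglede; but this does not affect the argument.
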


Let $A$ be a normal (self-adjoint if $\K= \R$) operator on a Hilbert space $H$ and let
$K \subseteq \K$ be a Borel subset of $\C$. A Borel calculus
$(\Phi,H)$ on $K$ is called a Borel calculus {\emdf for (the operator)
  $A$},
if $\Phi(\bfz) = A$. By Theorem \ref{uni.t.Cd} applied with $d=1$, 
a Borel calculus for $A$ is uniquely determined. We can even say a
little more.

\begin{cor}\label{uni.c.C}
Let $K, L$ be Borel subsets of $\K$ and 
let $(\Phi,H)$ and $(\Psi,H)$ be Borel functional
calculi on $K$ and $L$, respectively, for the
same operator $A$ on $H$. Then $\Phi$
and $\Psi$ are both concentrated on $K \cap L$ and 
\[ \Phi_{K\cap L} = \Psi_{K \cap L}.
\]
\end{cor}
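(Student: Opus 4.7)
The plan is to reduce the corollary to two applications of Theorem \ref{uni.t.Cd} (with $d=1$) by first pushing both calculi onto the common ambient space $\K$, deriving the concentration there, and then restricting to $K \cap L$.

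Concretely, let $\iota_K: K \hookrightarrow \K$ and $\iota_L: L \hookrightarrow \K$ denote the inclusions, and form the push-forward Borel calculi $\Phi^{\iota_K}$ and $\Psi^{\iota_L}$ on $\K$ (Corollary \ref{mfc.p.push}). By construction $\Phi^{\iota_K}$ is concentrated on $K$ (since $\Phi^{\iota_K}(\car_{\K\ohne K}) = \Phi(\car_{\K\ohne K} \circ \iota_K) = \Phi(0) = 0$) and $\Psi^{\iota_L}$ on $L$. Moreover $\bfz \circ \iota_K = \bfz_K$, so $\Phi^{\iota_K}(\bfz) = \Phi(\bfz_K) = A = \Psi(\bfz_L) = \Psi^{\iota_L}(\bfz)$. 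Theorem \ref{uni.t.Cd} then forces $\Phi^{\iota_K} = \Psi^{\iota_L}$. Translating back, $\Phi(\car_{K\ohne L}) = \Phi^{\iota_K}(\car_{K\ohne L}) = \Psi^{\iota_L}(\car_{K\ohne L}) = \Psi(\car_{K\ohne L} \circ \iota_L) = \Psi(0) = 0$, because $\car_{K\ohne L}$ vanishes on $L$. Hence $K \ohne L$ is $\Phi$-null, and symmetrically $L \ohne K$ is $\Psi$-null, so both $\Phi$ and $\Psi$ are concentrated on $K \cap L$.

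With concentration established, I would form the restrictions $\Phi_{K\cap L}$ and $\Psi_{K\cap L}$ (Section \ref{mfc.s.concen}); both are Borel calculi on $K\cap L$. To apply Theorem \ref{uni.t.Cd} a second time it suffices to check that both send $\bfz_{K\cap L}$ to $A$. Writing $f := (\bfz_{K\cap L})^K$ for the extension by zero to $K$, $f$ agrees with $\bfz_K$ outside the $\Phi$-null set $K\ohne L$; so by Lemma \ref{pvm.l.null}, $\Phi_{K\cap L}(\bfz_{K\cap L}) = \Phi(f) = \Phi(\bfz_K) = A$, and an identical argument gives $\Psi_{K\cap L}(\bfz_{K\cap L}) = A$. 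A second application of Theorem \ref{uni.t.Cd} yields $\Phi_{K\cap L} = \Psi_{K\cap L}$.

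The whole argument is essentially bookkeeping: the identity $\bfz \circ \iota_K = \bfz_K$ is what transports $A$ correctly through the push-forward, and Lemma \ref{pvm.l.null} is what allows extension-by-zero through null sets to preserve the defining relation $\Phi(\bfz_K) = A$ after restriction. I do not anticipate any substantive technical obstacle beyond this.
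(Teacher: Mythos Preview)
Your proof is correct and follows essentially the same route as the paper: extend both calculi to $\K$ via push-forward along the inclusions, invoke Theorem~\ref{uni.t.Cd} with $d=1$ to obtain $\Phi^{\iota_K}=\Psi^{\iota_L}$ (the paper writes these as $\Phi_\K=\Psi_\K$), and read off the null-set statements. The only difference is cosmetic: you close with a second application of Theorem~\ref{uni.t.Cd} on $K\cap L$, whereas the paper simply says ``the rest is simple'' --- once $\Phi_\K=\Psi_\K$, the identity $\Phi_{K\cap L}=(\Phi_\K)_{K\cap L}=(\Psi_\K)_{K\cap L}=\Psi_{K\cap L}$ follows by unwinding the definitions, without a further uniqueness step.
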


\begin{proof}
By Theorem \ref{uni.t.Cd} applied with $d=1$, one has $\Phi_\K = \Psi_\K$. 
Hence 
\[ \Phi(\car_{K \ohne L}) = 
\Phi_\C(\car_{K \ohne L}) = 
\Psi_\C(\car_{K \ohne L}) = \Psi(0) = 0.
\]
The rest is simple.
\end{proof}

Theorem \ref{uni.t.Cd} has another consequence, already mentioned in 
Section \ref{mfc.s.pull}.

\begin{prop}\label{uni.p.uni-mult}
Let $(\Phi, H)$ and $(\Psi, H)$  be measurable functional
calculi on the measurable spaces $(X, \Sigma_X)$ and $(Y, \Sigma_Y)$,
respectively. Let
$f_1, \dots, f_d \in \Meas(X, \Sigma_X)$ and
$g_1, \dots, g_d \in\Meas(Y,\Sigma_Y)$ such that 
\[  \Phi(f_j) = \Psi(g_j)\qquad (j=1, \dots, d).
\]
Then for each $h\in \Meas(\K^d)$ one has
\[ \Phi(h \nach (f_1, \dots, f_d)) = 
\Psi(h \nach (g_1, \dots, g_d)).
\]
\end{prop}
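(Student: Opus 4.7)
The plan is to reduce the statement to the uniqueness theorem for Borel calculi on $\K^d$ by moving everything to $\K^d$ via push-forward. Set $f := (f_1, \dots, f_d) : X \to \K^d$ and $g := (g_1, \dots, g_d): Y \to \K^d$; both are measurable, because each coordinate function is. Then, by Corollary~\ref{mfc.p.push}, we obtain two measurable functional calculi on $(\K^d, \Borel(\K^d))$ with values in $\Clo(H)$, namely
\[
\Phi^f(h) := \Phi(h \nach f), \qquad \Psi^g(h) := \Psi(h \nach g) \qquad (h \in \Meas(\K^d)).
\]
The conclusion we want is exactly the statement $\Phi^f = \Psi^g$.

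To establish this, I would simply check that $\Phi^f$ and $\Psi^g$ agree on the coordinate projections $\bfz_1, \dots, \bfz_d$ and then invoke condition~(i) of Theorem~\ref{uni.t.Cd}. This is immediate from the hypothesis:
\[
\Phi^f(\bfz_j) = \Phi(\bfz_j \nach f) = \Phi(f_j) = \Psi(g_j) = \Psi(\bfz_j \nach g) = \Psi^g(\bfz_j)
\]
for each $j = 1, \dots, d$. Theorem~\ref{uni.t.Cd} (applied on $X = \K^d$ with $H$ as the common Hilbert space) then yields $\Phi^f = \Psi^g$, which is the desired identity for every $h \in \Meas(\K^d)$.

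There is essentially no obstacle; the work has already been done in the push-forward construction (Corollary~\ref{mfc.p.push}) and in the uniqueness result for Borel calculi on $\K^d$ (Theorem~\ref{uni.t.Cd}). The only thing to notice is that the measurability of the composed maps $h \nach f$ and $h \nach g$ is automatic, and that $\Phi^f$ and $\Psi^g$ are genuinely calculi on the \emph{same} measurable space $(\K^d, \Borel(\K^d))$ and on the \emph{same} Hilbert space $H$, so that Theorem~\ref{uni.t.Cd} applies verbatim.
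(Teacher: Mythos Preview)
Your argument is correct and is precisely the intended one: the paper introduces Proposition~\ref{uni.p.uni-mult} as a direct consequence of Theorem~\ref{uni.t.Cd} via the push-forward construction of Section~\ref{mfc.s.pull}, and your proof spells out exactly that deduction.
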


\section{Construction of Measurable Calculi}\label{s.con}

In this section we describe different steps that lead to the
construction of a measurable functional calculus. In the
results we have in mind one starts
with a ``partial calculus'' so to speak.  That is, one is given a
subset $M \subseteq \Meas(X, \Sigma)$, in the following 
called our {\emdf set of departure}, and a mapping
$\Phi : M \to \Clo(H)$ that has the properties of a
restriction of a measurable calculus.
And one aims at asserting that this partial calculus 
is in fact such a restriction, that is, 
can be extended (uniquely, if possible) to a full measurable
calculus. 

In a sense, the spectral theorem itself is of this
form. There, $X = \K$, the set of departure
is $M = \{\bfz\}$  the coordinate
mapping, and the only requirement is that the operator
$\Phi(\bfz)$ is normal (self-adjoint if $\K= \R$).  

\medskip
\noindent
In all what follows, 
$(X, \Sigma)$ is a measurable space and $H$ a Hilbert space.

\medskip

\subsection{From Bounded to Unbounded Functions (Algebraic Extension)}\label{con.s.algext}

Here we take $M := \BMb(S, \Sigma)$, the bounded
measurable functions, as our set of departure.
We know already that each measurable functional
calculus on $(X, \Sigma)$ is uniquely determined 
by its restriction to $M$. 

But more is true: each functional calculus  defined 
originally on $\BMb(X, \Sigma)$ can be uniquely extended to a full
measurable functional calculus. The procedure for this is canonical
and  known as  ``algebraic extension'' or ``extension  by (multiplicative)
regularization''.

\begin{thm}\label{con.t.ext-bdd}
Let $(X, \Sigma)$ be a measurable space, 
$H$ a Hilbert space and 
\[ \Phi: \BMb(X, \Sigma) \to
\BL(H)
\]
a unital and (weakly) bp-continuous $*$-homomorphism. 
Then $\Phi$ extends uniquely to a measurable functional calculus
$\Meas(X, \Sigma) \to \Clo(H)$.
\end{thm}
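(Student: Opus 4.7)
Uniqueness is immediate from Lemma \ref{uni.l.bdd}, once the extension exists.

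For existence, the plan is to use algebraic extension by regularization, which is the method suggested in Section \ref{mfc.s.det}. For each $f\in \Meas(X,\Sigma)$ I will pick the \emph{regularizer} $e_f := (1+\abs{f})^{-1}$, which is bounded, nowhere zero, and has $e_f f$ bounded. The first step is to show: if $e\in \BMb(X,\Sigma)$ is nowhere zero, then $\Phi(e)\in \BL(H)$ is injective. For this, observe that the truncations $h_n := \abs{e}^2 \cdot \bigl(\abs{e}^{-2}\wedge n\bigr)$ converge to $\car$ pointwise and boundedly, so by the given bp-continuity (upgraded to strong convergence by the Hilbert-space argument already used in Theorem~\ref{mfc.t.prop1}.f) one has $\Phi(h_n) = \Phi(\abs{e}^{-2}\wedge n)\,\Phi(\konj{e})\,\Phi(e) \to \Id$ strongly; thus $\Phi(e)x=0$ forces $x=0$. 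Consequently
\[
\tilde\Phi(f) \defeq \Phi(e_f)^{-1}\Phi(e_f f)
\]
is a well-defined \emph{closed operator}, not merely a closed linear relation. Independence of the choice of regularizer follows because if $e,e'$ are two bounded nowhere-zero functions with $ef, e'f$ bounded, then multiplicativity of $\Phi$ on bounded functions gives $\Phi(e)\Phi(e'f)=\Phi(ee'f)=\Phi(e')\Phi(ef)$, and injectivity of $\Phi(e),\Phi(e')$ yields $\Phi(e)^{-1}\Phi(ef)=\Phi(e')^{-1}\Phi(e'f)$. In particular, if $f$ is already bounded then $\tilde\Phi(f) = \Phi(f)$ (take $e=e_f$ and use $\Phi(e_f)\Phi(f) = \Phi(e_f f)$), so $\tilde\Phi$ genuinely extends $\Phi$.

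Axioms (MFC1), (MFC4) and (MFC5) are now immediate, since they either are trivial or only involve bounded functions, on which $\tilde\Phi=\Phi$. For (MFC2), given $f,g\in \Meas(X,\Sigma)$ I will use the common regularizer $e:=(1+\abs{f}+\abs{g})^{-1}$ (so that $e,ef,eg,e(f+g)$ are all bounded) and then read off the inclusion $\tilde\Phi(f)+\tilde\Phi(g)\subseteq \tilde\Phi(f+g)$ by adding the defining identities $\Phi(ef)x=\Phi(e)\tilde\Phi(f)x$ and $\Phi(eg)x=\Phi(e)\tilde\Phi(g)x$; scalar multiplication is analogous.

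The main obstacle is (MFC3), in particular the precise domain identity. For this I will take the common regularizer $\epsilon := (1+\abs{f})^{-1}(1+\abs{g})^{-1}$, noting that $\epsilon$, $\epsilon f$, $\epsilon g$, $\epsilon fg$ and hence also $\epsilon^2 fg = (\epsilon f)(\epsilon g)$ are all bounded, so $\epsilon^2$ is a valid regularizer for $fg$. The inclusion $\tilde\Phi(f)\tilde\Phi(g)\subseteq \tilde\Phi(fg)$ then follows by noting that if $\Phi(\epsilon g)x=\Phi(\epsilon)y$ and $\Phi(\epsilon f)y=\Phi(\epsilon)z$, then applying the bounded operator $\Phi(\epsilon f)$ to the first equation and using multiplicativity on bounded functions gives $\Phi(\epsilon^2 fg)x=\Phi(\epsilon^2)z$. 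Conversely, for the domain equality $\dom(\tilde\Phi(f)\tilde\Phi(g))=\dom(\tilde\Phi(g))\cap \dom(\tilde\Phi(fg))$, I take $x$ in the right-hand set, set $y:=\tilde\Phi(g)x$ and $w:=\tilde\Phi(fg)x$, and compute
\[
\Phi(\epsilon)\,\Phi(\epsilon f)\,y \;=\; \Phi(\epsilon f)\Phi(\epsilon g)\,x \;=\; \Phi(\epsilon^2 fg)\,x \;=\; \Phi(\epsilon)\Phi(\epsilon)\,w;
\]
cancelling the injective $\Phi(\epsilon)$ yields $\Phi(\epsilon f)y=\Phi(\epsilon)w$, i.e.\ $y\in \dom(\tilde\Phi(f))$, which is exactly what is needed. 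This completes the construction.
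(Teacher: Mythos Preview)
Your proposal is correct and follows essentially the same route as the paper: algebraic extension via the regularizer $e=(1+\abs{f})^{-1}$, with $\tilde\Phi(f):=\Phi(e)^{-1}\Phi(ef)$. The paper's proof is terser---it states that independence of the choice of anchor and the verification of (MFC1)--(MFC3) are ``easy to see'' and defers the details to the general theory in \cite{Haase2020pre}---whereas you actually carry out the verification of (MFC2) and (MFC3), including the domain identity, by hand. Your injectivity argument (via $h_n=\abs{e}^2(\abs{e}^{-2}\wedge n)\to\car$) is a minor variant of the paper's (which uses $e_n=\car_{\{\abs{f}\le n\}}$ and the factorization $e_n=((1+\abs{f})e_n)\,e$); yours has the small advantage of showing directly that $\Phi(e)$ is injective for \emph{every} bounded nowhere-zero $e$, not just the specific $(1+\abs{f})^{-1}$.
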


\begin{proof}
Uniqueness is clear. For existence, let $\Phi: \BMb(X,\Sigma) \to
\BL(H)$ be as stated in the theorem. If $f\in \Meas(X, \Sigma)$
is arbitrary, we take any {\em anchor element}%
\footnote{Such elements were called ``regularizers'' in  
\cite{HaaseFC}, but the terminology has been modified 
in the meantime, cf. \cite{Haase2020pre}.}
for $f$ in $\BMb(X, \Sigma)$, i.e., a
function $e\in \BMb(X, \Sigma)$ such that $ef$ is
bounded and $\Phi(e)$ is injective. (The function $e=
(1+\abs{f})^{-1}$ will do, see below.)
Then define
\[  \widetilde{\Phi}(f) := \Phi(e)^{-1} \Phi(ef).
\]
It is easy to see that this definition does not depend on the choice 
of the  function $e$ and the so-defined mapping 
$\widetilde{\Phi} : \Meas(X, \Sigma) \to \Clo(H)$
extends $\Phi$ and satisfies (MFC1)--(MFC3).  As a matter of fact,
it also satisfies  (MFC4) and (MFC5), hence it is a
measurable functional calculus.

\prfnoi
It remains to show 
that an anchor element as above can always be found. 
To this end, for given $f\in \Meas(X, \Sigma)$  define
\[   e := \frac{1}{1 + \abs{f}} \quad \text{and}\quad e_n :=
\car_{\set{\abs{f}\le n}} \quad (n \in \N).
\]
Clearly,  $e$ and $ef$ are both bounded functions.
Also,  $e_n \abs{f}$ is bounded and
\[ e_n = \bigl( (1 +\abs{f})e_n \bigr) e,
\]
which leads to $\Phi(e_n)= \Phi( (1 +\abs{f})e_n) \Phi(e)$.
Since  $\Phi(e_n) \to \Phi(\car) = \Id$ strongly (by (MFC1) and
(MFC5')),  $\Phi(e)$ must be  injective.
\end{proof}

Theorem \ref{con.t.ext-bdd} 
tells that in order to establish a measurable functional
calculus it suffices to construct its restriction to bounded
functions. The rest is canonical.

\medskip

The following example
shows that without the assumption of 
bp-continuity in Theorem \ref{con.t.ext-bdd}
one can encounter quite degenerate situations. 
(I am indebted to Hendrik Vogt for providing the
main idea.)

\begin{exa}\label{con.exa.hendrik}
Let $\K=\ \C$, $X = \N$ and $\Sigma = \Pot{\N}$, the whole power set. Then 
\[ \BMb(X,
\Sigma) = \ell^\infty\quad  and \quad
\Meas(X, \Sigma) = \C^\N, 
\]
the space of all sequences.

For each strictly increasing mapping 
(``subsequence'')  $\pi: \N \to \N$ pick
a non-zero multiplicative functional
$\Phi_\pi : \ell^\infty \to \C$ which vanishes 
on the ideal of sequences $x = (x_n)_n \in \ell^\infty$ 
such that $\lim_{n \to \infty} x_{\pi(n)} = 0$. 
This exists: by Zorn's lemma there is a maximal ideal $M_\pi$
containing this ideal 
and by the Gelfand--Mazur theorem $\ell^\infty/M_\pi 
\cong \C$ as 
Banach algebras. By the commutative Gelfand--Naimark theorem,
$\Phi_\pi$ is a unital $*$-homomorphism. (Alternatively one can define
$\Phi_\pi$ as the ultrafilter limit with respect to some 
ultrafilter that contains all the ``tails''
 $\{\pi(k) \suchthat k \ge n\}$ for $n \in \N$.) 

Now let $I$ be the set of all such subsequences $\pi$, 
let $H := \ell^{2}(I)$ and define 
\[ 
\Phi: \ell^\infty \to  \ell^\infty(I) \subseteq \BL(H),\qquad 
\Phi(x) := (\Phi_\pi(x))_\pi,
\]
where we identify a bounded function on $I$ with the
associated multiplication operator on $H= \ell^2(I)$. Then 
$\Phi$ is a unital $*$-homomorphism. 

If $f : \N \to \C$ is any unbounded sequence, then
there is a subsequence $\pi\in I$ along which 
$\abs{f}$  converges  to $+\infty$. Hence, if 
$e \in \ell^\infty$ is such that $ef \in\ell^\infty$ as well,
then $e(n)$ converges to zero along $\pi$. Consequently,
$\Phi_\pi(e) = 0$. Let $\delta_\pi \in H$ be the
canonical unit vector which is $1$ at $\pi$ and $0$ else.
Then $\Phi(e) \delta_\pi = \Phi_\pi(e)\delta_\pi = 0$. This 
not only shows that $f$ does not admit any ``anchor elements'',
but even more: the set
\[ [f]_{\ell^\infty} := \{ e\in \ell^\infty \suchthat ef \in \ell^\infty\}
\]
is not an ``anchor set'' (in the terminology of \cite{Haase2020pre}). 
It follows that algebraic extension, even in its
more general form discussed in \cite{Haase2020pre}, does not
lead to a proper extension of the original calculus. 
Of course, $\Phi$ is not bp-continuous.
\end{exa}

\medskip

\subsection{From Projection-Valued Measures to Measurable Functional Calculus}\label{con.s.ext-pvm}

Next, we take $M = \{ \car_A \suchthat 
A\in \Sigma\}$, the set of all characteristic functions, 
as our set of departure. 
In other words, we start with a projection-valued measure. 

\begin{thm}\label{con.t.ext-pvm}
Let $(X,\Sigma)$ be a measurable space, let
$H$ be a Hilbert space, and let
$\Ee: \Sigma \to \BL(H)$ be a projection-valued
measure as  defined in Section \ref{s.pvm}. 
Then there exists a unique measurable functional
calculus $\Phi : \Meas(X, \Sigma) \to \Clo(H)$
such that  $\Ee(A) = \Phi(\car_A)$ for 
each $A\in \Sigma$. 
\end{thm}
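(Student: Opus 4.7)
The plan is to reduce the construction to Theorem \ref{con.t.ext-bdd}. That is, I would first build a unital, bp-continuous $*$-homomorphism $\Phi_0 : \BMb(X, \Sigma) \to \BL(H)$ satisfying $\Phi_0(\car_A) = \Ee(A)$ for all $A \in \Sigma$, and then apply the algebraic extension theorem to obtain the unique measurable functional calculus $\Phi$ extending $\Phi_0$ to $\Meas(X, \Sigma)$. Uniqueness on $\Meas(X, \Sigma)$ is then immediate from Proposition \ref{uni.p.pvm}, since any two solutions induce the same PVM.

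For the construction of $\Phi_0$, a preparatory step is the multiplicativity identity $\Ee(A)\Ee(B) = \Ee(A \cap B)$ for all $A, B \in \Sigma$. For disjoint $A, B$, (PVM3) applied to $A \sqcup B$ shows that $\Ee(A) + \Ee(B)$ is itself a projection, which forces $\Ee(A)\Ee(B) = 0$; the general case follows by decomposing $A$ and $B$ into disjoint pieces and using countable additivity. Given this, I would define $\Phi_0$ first on simple functions $f = \sum_{j=1}^n \alpha_j \car_{A_j}$ with a measurable partition $(A_j)_j$ by
\[ \Phi_0(f) := \sum_{j=1}^n \alpha_j \Ee(A_j). \]
Well-definedness (independence of the chosen partition), linearity, multiplicativity, and the $*$-property reduce to the PVM identities. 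The key bound is
\[ \|\Phi_0(f)x\|^2 = \sum_{j=1}^n |\alpha_j|^2 \|\Ee(A_j)x\|^2 \le \|f\|_\infty^2 \|x\|^2, \]
where orthogonality of the $\Ee(A_j)$ is used in the first equality and $\sum_j \|\Ee(A_j)x\|^2 = \|x\|^2$ (from (PVM2) and (PVM3)) in the second. Since every bounded measurable function is a uniform limit of simple functions, this estimate lets me extend $\Phi_0$ by continuity to $\BMb(X, \Sigma)$, and all algebraic properties pass to the limit.

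For bp-continuity, I would introduce the scalar set functions
\[ \mu_{x,y}(A) := \dprod{\Ee(A)x}{y} \qquad (A \in \Sigma, \; x,y \in H), \]
which by (PVM1)--(PVM3) are complex measures of bounded variation on $(X, \Sigma)$. Verifying the representation $\dprod{\Phi_0(f)x}{y} = \int_X f \, \ud{\mu_{x,y}}$ for simple $f$ and extending by uniform convergence, I can then, given a bp-convergent sequence $f_n \to f$ in $\BMb(X, \Sigma)$, invoke the dominated convergence theorem on each $\mu_{x,y}$ to conclude that $\Phi_0(f_n) \to \Phi_0(f)$ weakly. By Theorem \ref{con.t.ext-bdd}, this is enough: the algebraic extension produces the required measurable calculus $\Phi$, which by construction satisfies $\Phi(\car_A) = \Ee(A)$.

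The main obstacle is the multiplicativity identity $\Ee(A)\Ee(B) = \Ee(A \cap B)$, which is not among the axioms (PVM1)--(PVM3) but is the algebraic backbone of everything that follows; the rest of the argument is a standard integration-against-a-PVM construction. Once this identity is in hand, the estimate on simple functions, the uniform extension, and the verification of bp-continuity via the scalar measures $\mu_{x,y}$ are routine, and the theorem follows from Theorem \ref{con.t.ext-bdd} and Proposition \ref{uni.p.pvm}.
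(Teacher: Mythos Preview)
Your proposal is correct and follows essentially the same route as the paper: define $\Phi$ on simple functions via the PVM, extend by uniform continuity to $\BMb(X,\Sigma)$, verify weak bp-continuity through the scalar measures $\mu_{x,y}$ and dominated convergence, and then invoke Theorem~\ref{con.t.ext-bdd}. You supply a few details the paper leaves to ``standard arguments'' (the multiplicativity identity $\Ee(A)\Ee(B)=\Ee(A\cap B)$ and the explicit norm estimate on simple functions), and you make the appeal to Proposition~\ref{uni.p.pvm} for uniqueness explicit, but the strategy is identical.
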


\begin{proof}
It follows from the axioms that if $A, B \in \Sigma$ are disjoint then
the ranges of 
$\Ee(A)$ and $\Ee(B)$ are orthogonal. 
Define $\Phi$ on simple functions $f$ by 
\[ f = \sum_{j=1}^n \alpha_j \car_{A_j}
\quad\dann\quad \Phi(f) := \sum_{j=1}^n \alpha_j \Ee(A_j)
\]
where $A_1,\dots, A_n$ is any finite measurable
partition of $X$ and $\alpha_1, \dots, \alpha_n$ are
scalars. By standard arguments it is shown that
$\Phi$ is a (well-defined) contractive unital $*$-homomorphism. Hence
$\Phi$ extends continuously to $\BMb(X, \Sigma)$, 
and this extension, again denoted by $\Phi$,
is still a contractive  unital $*$-homomorphism. 

In view of Theorem \ref{con.t.ext-bdd} it suffices to show that
$\Phi$ is weakly  bp-continuous. For any pair
$x,y\in H$ the mapping
\[ \mu_{x,y}: \Sigma \to \K,\qquad 
\mu_{x,y}(A) := \dprod{\Ee(A)x}{y}
\]
is a $\K$-valued measure. Obviously,
\[ \dprod{\Phi(f)x}{y} = \int_X f \,\ud{\mu_{x,y}}
\]
for each $f\in \BMb(X, \Sigma)$. Therefore,
(MFC5) is a consequence of the dominated convergence
theorem, and the proof is complete. 
\end{proof}

\medskip

\subsection{From Continuous to Measurable Functional Calculus}%
\label{con.s.ext-cont}

In this section we confine ourselves  
to a compact Hausdorff space $X$ endowed
with the Borel $\sigma$-algebra  $\Sigma = \Borel(X)$.
The set of departure is $M = \Ce(X)$, the space of
continuous functions.

\begin{thm}\label{con.t.ext-cont}
Let $X$ be a compact Hausdorff space
and let $\Phi: \Ce(X) \to \BL(H)$ be a unital 
$*$-homomorphism. Then $\Phi$ extends
uniquely to a measurable calculus $\Phi$ on 
$(X, \Borel(X))$ with the additional property:
\[  \dprod{\Phi(f)x}{x} = \sup\{ \dprod{\Phi(g)x}{x} \suchthat 
g\in \Cc(X), 0 \le g \le f\} \qquad (x\in H)
\]
whenever $f\ge 0$ is a bounded and lower semi-continuous function on $X$. 
%
\end{thm}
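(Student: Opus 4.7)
My plan is to reduce the theorem to Theorem \ref{con.t.ext-bdd} by building a weakly bp-continuous unital $*$-homomorphism $\BMb(X,\Borel(X)) \to \BL(H)$ extending $\Phi$, via the Riesz--Markov representation theorem applied pointwise in the Hilbert space. Since $\Phi$ is a unital $*$-homomorphism of unital $C^*$-algebras it is automatically contractive, so for each $x,y \in H$ the Riesz--Markov--Kakutani theorem yields a unique regular complex Borel measure $\mu_{x,y}$ on $X$ with $\abs{\mu_{x,y}}(X) \le \norm{x}\norm{y}$ and
\[
\dprod{\Phi(f)x}{y} = \int_X f \,\ud{\mu_{x,y}} \qquad (f \in \Ce(X)).
\]
Uniqueness in Riesz makes $(x,y) \mapsto \mu_{x,y}$ sesquilinear and yields $\mu_{y,x} = \konj{\mu_{x,y}}$; since $\int\abs{h}^2\,\ud{\mu_{x,x}} = \norm{\Phi(h)x}^2 \ge 0$ for each $h\in\Ce(X)$, the measure $\mu_{x,x}$ is a positive Radon measure of total mass $\norm{x}^2$.

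For each $f \in \BMb(X,\Borel(X))$ the sesquilinear form $(x,y) \mapsto \int_X f \,\ud{\mu_{x,y}}$ is bounded by $\norm{f}_\infty$ and so defines $\widetilde\Phi(f) \in \BL(H)$. Dominated convergence applied to each $\mu_{x,y}$ makes $\widetilde\Phi: \BMb(X,\Borel(X)) \to \BL(H)$ weakly bp-continuous, and the relation $\mu_{y,x} = \konj{\mu_{x,y}}$ gives $\widetilde\Phi(\konj f) = \widetilde\Phi(f)^*$. Multiplicativity $\widetilde\Phi(fg) = \widetilde\Phi(f)\widetilde\Phi(g)$ I verify by a two-step bp-closure argument: fix $f \in \Ce(X)$; the class $\{g \in \BMb(X,\Borel(X)) : \widetilde\Phi(fg) = \Phi(f)\widetilde\Phi(g)\}$ is bp-closed, contains $\Ce(X)$, and I extend it to all bounded Borel $g$ by using Urysohn's lemma together with the regularity of the $\mu_{x,y}$ to approximate characteristic functions of open sets from below by continuous functions; then I fix $g$ and repeat the argument for $f$.

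Theorem \ref{con.t.ext-bdd} now provides the unique extension of $\widetilde\Phi$ to a measurable functional calculus on $\Meas(X, \Borel(X))$. Uniqueness at the top level reduces via Lemma \ref{uni.l.bdd} to uniqueness on $\BMb$: any competing extension $\Psi$ induces weakly bp-continuous functionals $\dprod{\Psi(\cdot)x}{y}$ agreeing with $\int \cdot \,\ud{\mu_{x,y}}$ on $\Ce(X)$, so the same bp-closure and regularity argument forces agreement on all of $\BMb$. The additional property is then just the inner-regularity statement for the positive Radon measure $\mu_{x,x}$: for bounded lsc $f \ge 0$, Urysohn supplies an upward-directed family of continuous $0 \le g \le f$ and yields
\[
\dprod{\widetilde\Phi(f)x}{x} = \int_X f \,\ud{\mu_{x,x}} = \sup\Bigl\{\int_X g\,\ud{\mu_{x,x}} : g \in \Cc(X),\, 0\le g \le f\Bigr\},
\]
noting that $\Cc(X) = \Ce(X)$ on the compact $X$.

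The main technical obstacle is the multiplicativity of $\widetilde\Phi$: bp-closure arguments starting from $\Ce(X)$ naturally reach only the bounded Baire measurable functions, which in a general compact Hausdorff space form a strict subalgebra of the bounded Borel functions. Bridging this gap forces one to exploit the Radon regularity of the representing measures $\mu_{x,y}$ to propagate the algebraic identities from $\Ce(X)$ to all of $\BMb(X,\Borel(X))$.
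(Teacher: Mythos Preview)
Your existence argument is essentially the paper's: Riesz--Markov--Kakutani yields the regular measures $\mu_{x,y}$, sesquilinear forms define $\widetilde\Phi$ on $\BMb(X,\Borel(X))$, one checks this is a weakly bp-continuous unital $*$-homomorphism, and Theorem~\ref{con.t.ext-bdd} does the rest. You give more detail on multiplicativity than the paper (which simply calls it ``routine''), and you correctly flag the Baire/Borel gap; your regularity-based bridge for multiplicativity works, since both $f\cdot\mu_{x,y}$ and $\mu_{\widetilde\Phi(g)x,y}$ are regular measures agreeing on $\Ce(X)$.

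Your uniqueness argument, however, has a genuine gap: you never invoke the additional property. You write that a competing extension $\Psi$ induces bp-continuous functionals $\dprod{\Psi(\cdot)x}{y}$ agreeing with $\int\cdot\,\ud{\mu_{x,y}}$ on $\Ce(X)$, and that ``the same bp-closure and regularity argument forces agreement''. But bp-continuity of $\dprod{\Psi(\cdot)x}{y}$ only tells you that $A\mapsto\dprod{\Psi(\car_A)x}{y}$ is a (complex) Borel measure, not a \emph{regular} one; on a non-metrizable compact Hausdorff space two distinct Borel measures can induce the same functional on $\Ce(X)$. So nothing in your argument pins down $\Psi$ beyond the Baire algebra. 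This is precisely why the theorem builds the additional property into the uniqueness claim: it forces inner regularity on open sets for the measures coming from $\Psi$. The paper's uniqueness proof uses it directly: for open $U$ the additional property applied to $f=\car_U$ gives $\dprod{\Psi(\car_U)x}{x}=\sup\{\dprod{\Phi(g)x}{x}:g\in\Ce(X),\,0\le g\le\car_U\}$, which depends only on $\Phi\res{\Ce(X)}$; hence any two such extensions agree on $\{\car_U:U\text{ open}\}$, and then Theorem~\ref{uni.t.inter} (the coincidence set of characteristic functions is a $\sigma$-algebra) finishes the job.
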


\begin{proof}
Existence:\ This is rather standard, so we just give a sketch. For more
details cf. the proof of Theorem 12.2 in \cite{RudinFA}. 

\prfnoi
As in the proof of Theorem \ref{mfc.t.prop1} one shows that 
$\Phi$ is contractive. It follows that for all
$x,y\in H$ the linear functional
$f\mapsto \dprod{\Phi(f)x}{y}$
is bounded. By the Riesz--Markov--Kakutani
representation theorem, there is a unique
regular $\K$-valued Borel measure $\mu_{x,y} \in \eM(X)$ such 
that 
\[  \dprod{\Phi(f)x}{y} = \int_X f\, \ud{\mu_{x,y}} 
\qquad (f\in \Ce(X), \, x,y,\in H).
\]
One easily shows that the mapping $(x,y)\mapsto \mu_{x,y}$
is sesquilinear (bilinear if $\K= \R$).  Given $g \in \BMb(X, \Borel(X))$, 
the sesquilinear/bilinear form
\[ (x, y ) \mapsto \int_X g \, \ud{\mu_{x,y}}
\]
is bounded. By a standard result from Hilbert space theory,
there is a unique operator $\Psi(g)$ such that 
\[ \dprod{\Psi(g)x}{y} = \int_X f\, \ud{\mu_{x,y}}
\]
for all $x,y\in H$. It is then routine to show that 
$\Psi: \BMb(X, \Borel(X)) \to \BL(H)$
is a weakly bp-continuous unital $*$-homomorphism that
extends $\Phi$. Moreover, it follows 
from the regularity of the measures $\mu_{x,x}$ that 
$\Psi$ has the additional property asserted
in the theorem.  By Theorem \ref{con.t.ext-bdd}, 
$\Psi$ extends to a full measurable
calculus.

\prfnoi
Uniqueness:\ Let $\Psi_1, \Psi_2$ be two extensions of $\Phi$ that
both have the additional property.
By Theorem \ref{uni.t.inter}, the set $\{ A\in \Sigma \suchthat
\Psi_1(\car_A) = \Psi_2(\car_A)\}$ is a $\sigma$-algebra.
By the additional property, this $\sigma$-algebra contains
each open set, and hence coincides with $\Borel(X)$. 
It follows that $\Psi_1= \Psi_2$. 
\end{proof}

If the compact space $X$ is metrizable, 
each open subset is $\sigma$-compact, and each bounded and  
positive lower semicontinuous function is the 
pointwise limit of a uniformly bounded  sequence of continuous
functions. It follows that in this case the additional
property is automatic, and the uniqueness assertion
holds without that requirement.

\begin{cor}\label{con.c.cont-metr}
Let $X$ be a compact and  metrizable space
and let $\Phi: \Ce(X) \to \BL(H)$ be a unital 
$*$-homomorphism. Then $\Phi$ extends
uniquely to a measurable calculus $\Phi$ on 
$(X, \Borel(X))$. 
\end{cor}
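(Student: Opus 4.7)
The existence half is immediate: Theorem \ref{con.t.ext-cont} already produces a measurable calculus extending $\Phi$ (one that even satisfies the additional lower semi-continuity property), so all the work is in uniqueness. That is, one must show that any two measurable calculus extensions $\Psi_1, \Psi_2$ of $\Phi$ coincide on $\Meas(X, \Borel(X))$, without having the additional property available.

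My plan is to study the coincidence set
\[ E := \{ f\in \Meas(X, \Borel(X)) \suchthat \Psi_1(f) = \Psi_2(f)\},
\]
which by Corollary \ref{uni.c.coin} enjoys the eight properties 1)--8) from Theorem \ref{uni.t.inter}. By hypothesis $\Ce(X) \subseteq E$, and thanks to Lemma \ref{uni.l.bdd} it suffices to prove $\BMb(X, \Borel(X)) \subseteq E$. I would first show that $\car_U \in E$ for every open $U\subseteq X$; property 7) then identifies $\{A\in \Borel(X) \suchthat \car_A \in E\}$ as a sub-$\sigma$-algebra of $\Borel(X)$ containing all open sets, hence equal to $\Borel(X)$. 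Property 1) next promotes this to all simple functions, and property 4) (closure under pointwise convergence) extends to arbitrary bounded Borel functions via uniform pointwise approximation by simple functions, completing the proof.

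The only step where metrizability is essential is showing $\car_U \in E$ for open $U$, and this is the point I expect to require a moment's care. Fix a compatible metric $d$ on $X$ and set
\[  g_n := \min\bigl(\, n\, d(\cdot, X\ohne U),\, 1\,\bigr) \qquad (n \in \N).
\]
Each $g_n$ is continuous with values in $[0,1]$, hence lies in $E$; since $d(x, X\ohne U) > 0$ exactly when $x \in U$, the sequence converges pointwise (and boundedly) to $\car_U$, and property 4) of $E$ delivers $\car_U\in E$. Apart from this metrizability input (which is precisely the observation flagged in the paragraph preceding the corollary, that open sets are $\sigma$-compact and positive bounded lower semi-continuous functions are bounded pointwise limits of continuous ones), the argument is purely mechanical bookkeeping with the algebraic and $\sigma$-algebraic closure properties recorded in Theorem \ref{uni.t.inter}; I foresee no genuine obstacle beyond correctly quoting those properties in sequence.
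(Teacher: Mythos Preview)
Your proposal is correct and follows essentially the same route as the paper: the paper observes (in the paragraph preceding the corollary) that in the metrizable case every bounded positive lower semicontinuous function---in particular each $\car_U$ for open $U$---is a bp-limit of continuous functions, whence the additional property of Theorem~\ref{con.t.ext-cont} is automatic and its uniqueness argument applies. Your explicit construction $g_n = \min(n\,d(\cdot, X\ohne U),1)$ makes this step concrete, and the remainder (using properties 4) and 7) of the coincidence set) matches the paper's uniqueness proof in Theorem~\ref{con.t.ext-cont} verbatim.
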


\begin{rem}\label{con.r.cont-baire}
The {\emdf Baire algebra} $\Baire(X)$ on a compact Hausdorff space $X$ 
is the smallest $\sigma$-algebra that renders each continuous
function measurable. It coincides with the Borel algebra
when $X$ is metrizable, but is generally different from it. 

A measure defined on the Baire algebra is called a {\emdf Baire
  measure}. Baire measures are automatically regular and 
uniquely determined by their associated
linear functionals on $\Ce(X)$. By using Baire measures instead
of regular Borel measures, one sees that Corollary
\ref{con.c.cont-metr}
stays true if one drops metrizability but replaces $\Borel(X)$ by 
$\Baire(X)$. 
\end{rem}

\medskip
\subsection{Cartesian Products}\label{con.s.prod}

In this last section  we look at measurable functional calculi 
on  Cartesian products, that is, tensor products 
of functional calculi. At least in a special case, we have 
a positive result.

\begin{thm}\label{con.t.prod}
Let  $(\Phi, H)$ and $(\Psi, H)$ be
Borel functional calculi on the compact metric spaces
$X$ and $Y$, respectively. Suppose that
these calculi commute, in the sense that
\[ \Phi(f)\Psi(g) = \Psi(g)\Phi(f)\qquad
(f \in \Ce(X),\,\, g\in \Ce(Y)).
\]
Then there is a unique Borel calculus
$(\Phi \tensor \Psi, H)$ on 
$X\times Y$
such that 
\[ (\Phi\tensor \Psi)(f\tensor g) = 
\Phi(f) \Psi(g)
\]
for all $f\in\BMb(X)$ 
and $g \in \BMb(Y)$.
\end{thm}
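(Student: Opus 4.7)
My plan is to handle uniqueness by reducing to continuous functions via the machinery of Section \ref{s.uni}, and to handle existence by first building a $*$-homomorphism on $\Ce(X\times Y)$ and then applying Corollary \ref{con.c.cont-metr}, before finally promoting the tensor identity from continuous to bounded measurable functions via push-forwards.

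For uniqueness, suppose $\Theta_1,\Theta_2$ are two Borel calculi with the stated property. By Corollary \ref{uni.c.coin}, their coincidence set $E$ has the properties 1)--8) of Theorem \ref{uni.t.inter}. Since $E$ contains every elementary tensor $f\tensor g$ with $f\in \Ce(X), g\in \Ce(Y)$, it contains the unital $*$-subalgebra of $\Ce(X\times Y)$ they span, which by Stone--Weierstra\ss{} is uniformly dense in $\Ce(X\times Y)$. Since $E$ is closed under bp-convergence of sequences (property 4)), and uniform convergence is a special case, $E\supseteq \Ce(X\times Y)$; then properties 7)--8) propagate membership to all Borel sets as in the uniqueness part of the proof of Corollary \ref{con.c.cont-metr}, forcing $E = \Meas(X\times Y)$.

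For existence, I would first define $\pi_0$ on the algebraic tensor product $\Ce(X)\tensor\Ce(Y)$ by linear extension of $\pi_0(f\tensor g) := \Phi(f)\Psi(g)$. Each $\Phi(f)$, $f\in \Ce(X)$, is a bounded normal operator (by Theorem \ref{mfc.t.prop2}.d), and similarly for $\Psi(g)$; Fuglede's Theorem \ref{uni.t.fuglede} upgrades the assumed commutativity to commutativity with adjoints, so $\Phi(\Ce(X))$ and $\Psi(\Ce(Y))$ generate a commutative $*$-subalgebra $\mathcal{A}\subseteq \BL(H)$. A short computation then shows that $\pi_0$ is a well-defined unital $*$-homomorphism into $\mathcal{A}$. \emph{The main obstacle} is proving $\pi_0$ contractive with respect to the sup norm inherited from $\Ce(X\times Y)$. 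For this, I would pass to the norm closure $\cls{\mathcal{A}}$, a commutative unital $C^*$-algebra isomorphic via Gelfand--Naimark to some $\Ce(K)$; the restrictions of $\Phi$ and $\Psi$ then correspond to pullbacks along continuous maps $\alpha:K\to X$ and $\beta:K\to Y$, so $\pi_0(h)$ corresponds to $h\nach (\alpha,\beta)\in \Ce(K)$, yielding $\|\pi_0(h)\|\le \|h\|_\infty$. Stone--Weierstra\ss{} then extends $\pi_0$ uniquely to a unital $*$-homomorphism $\pi:\Ce(X\times Y)\to \BL(H)$, and Corollary \ref{con.c.cont-metr} upgrades $\pi$ to a Borel calculus $\Theta := \Phi\tensor\Psi$ on $X\times Y$.

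Finally, to promote the tensor identity to bounded measurable $f,g$, I would compare the push-forward calculus $\Theta^{p_X}$ on $X$ along the coordinate projection $p_X:X\times Y\to X$ with $\Phi$. For $f\in \Ce(X)$ one has $\Theta^{p_X}(f) = \Theta(f\nach p_X) = \pi(f\tensor\car_Y) = \Phi(f)$, so by Corollary \ref{con.c.cont-metr} applied to the compact metric space $X$ the two Borel calculi on $X$ agree on all of $\BMb(X)$, giving $\Theta(f\tensor\car_Y) = \Phi(f)$ for every $f\in \BMb(X)$; symmetrically $\Theta(\car_X\tensor g) = \Psi(g)$ for $g\in \BMb(Y)$. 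Since $f\tensor g = (f\tensor\car_Y)(\car_X\tensor g)$ as bounded measurable functions, Theorem \ref{mfc.t.prop1}.b yields $\Theta(f\tensor g) = \Phi(f)\Psi(g)$, completing the plan.
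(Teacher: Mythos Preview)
Your proof is correct and follows the same broad architecture as the paper's: build a unital $*$-homomorphism on a dense subalgebra of $\Ce(X\times Y)$, invoke Corollary \ref{con.c.cont-metr}, and then use push-forwards along the coordinate projections to promote the tensor identity to all of $\BMb$.

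The genuine difference lies in the choice of that dense subalgebra and, consequently, in how contractivity is obtained. The paper first extends the commutation hypothesis from continuous to all bounded measurable functions (via Theorem \ref{uni.t.inter}) and then works with the algebra $\calE = \spann\{\car_{A\times B}\}$ of step functions on measurable rectangles. Because every positive element of $\calE$ can be rewritten over a disjoint grid of rectangles, $\calE$ is closed under square roots of positive elements, and contractivity follows by the elementary $C^*$-trick from the proof of Theorem \ref{mfc.t.prop1}.e --- no Gelfand theory required. You instead stay with $\Ce(X)\tensor\Ce(Y)$ and obtain contractivity by representing the commutative $C^*$-algebra $\cls{\calA}$ generated by $\Phi(\Ce(X))\cup\Psi(\Ce(Y))$ as $\Ce(K)$ via Gelfand--Naimark, so that $\pi_0(h)$ becomes $h\nach(\alpha,\beta)$. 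This is perfectly valid and has the pleasant side effect that well-definedness of $\pi_0$ comes for free from the same identification. The paper's route is more self-contained (it avoids importing Gelfand theory, in keeping with the spirit of the elementary proofs discussed around Remark \ref{spt.r.elem}); yours is more structural.

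Two minor remarks. First, your appeal to Fuglede is unnecessary here: since $\Phi(f)^* = \Phi(\konj{f})$ and $\konj{f}\in\Ce(X)$, the hypothesis already gives commutativity with adjoints, so $\calA$ is a commutative $*$-algebra without further work. Second, your uniqueness argument via Corollary \ref{uni.c.coin} is fine, but note that the paper simply cites Corollary \ref{con.c.cont-metr}: once two Borel calculi agree on elementary tensors in $\Ce(X)\tensor\Ce(Y)$, Stone--Weierstra\ss{} and the norm bound of Theorem \ref{mfc.t.prop1}.e force agreement on $\Ce(X\times Y)$, and then Corollary \ref{con.c.cont-metr} finishes.
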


\begin{proof}
Uniqueness follows from Corollary \ref{con.c.cont-metr}. 
For existence, observe first 
that by Theorem \ref{uni.t.inter} and the hypothesis on has
\beq\label{con.eq.prod-comm}
\Phi(f) \Psi(g) = \Psi(g) \Phi(f)
\eeq
for all $f\in \BMb(X)$ and $g\in \BMb(Y)$. Now let
\[ \calE := \spann\{ \car_{A \times B} \suchthat
A\in \Borel(X),\, B \in \Borel(Y)\}
\]
and  define a linear mapping 
\[ \Lambda: \calE 
\to \BL(H),\qquad \Lambda(\car_{A\times B}) := \Phi(\car_A)\Psi(\car_B).
\]
(Of course, one has to show that this map is well defined.)
From \eqref{con.eq.prod-comm}  it follows that
$\Lambda$ is a unital $*$-homomorphism, and
since $\calE$ is closed under taking square roots of
positive functions, $\Lambda$ is contractive
(cf.{ }the proof of Theorem \ref{mfc.t.prop1}.e). Hence,
$\Lambda$ extends uniquely to a bounded operator
(again denoted by $\Lambda$ 
on the $\norm{\cdot}_\infty$-closure $\cls{\calE}$ of 
$\calE$. As a matter of fact, this extension is still
a unital $*$-homomorphism.  

By the Stone--Weierstra\ss{} theorem,
$\Ce(X\times Y)$ is the closure of $\Ce(X) \tensor \Ce(Y)$
and hence
contained in $\cls{\calE}$. So we 
may apply Corollary \ref{con.c.cont-metr} 
to obtain an extension
of $\Lambda$, denoted by $\Phi \tensor \Psi$, to a Borel
functional calculus on $X\times Y$. 

The mapping $f\mapsto (\Phi \tensor \Psi)(f \tensor \car)$
is a Borel calculus on $X$ (it is the
pull-back with respect to the mapping $f \mapsto f \tensor \car$)
and coincides with $\Phi$
on $\Ce(X)$. It follows that 
\[ (\Phi \tensor \Psi)(f \tensor \car) = \Phi(f) 
\]
for all $f\in \BMb(X)$. Analogously, one obtains
$(\Phi \tensor \Psi)(\car \tensor g) = \Psi(g)$
for all $g\in \BMb(Y)$. This implies
\[ (\Phi\tensor\Psi)(f\tensor g) =
(\Phi\tensor\Psi)\Bigl((f\tensor \car) (\car \tensor g)\Bigr)
= \Phi(f) \Psi(g),
\]
as desired. 
\end{proof}

\begin{rem}\label{con.r.prod-baire}
Recall from Remark \ref{con.r.cont-baire} 
that one can allow for
non-metrizable spaces in Corollary \ref{con.c.cont-metr} 
when
one uses the Baire instead of the Borel algebra. 
A similar remark applies to Theorem  \ref{con.t.prod}. 
Continuing in  this line of thought, by adapting the proof
of Theorem \ref{con.t.prod} one obtains the following
generalization to arbitrary products:

\smallskip
\noindent
{\bf Theorem:} {\em 
Let, for each $\lambda \in \Lambda$,  $(\Phi_\lambda, H)$ be a
Baire functional calculus on the compact Hausdorff space
$X_\lambda$. Suppose that all 
these calculi commute, in the sense that
\[ \Phi_\lambda(f)\Phi_\mu(g) = \Phi_\mu(g)\Phi_\lambda(f)
\]
for all $\lambda,\: \nu \in \Lambda$,  
$f \in \BMb(X_\lambda,\Baire(X_\lambda))$ and $g\in \BMb(X_\mu, \Baire(X_\mu))$.

Then there is a unique Baire calculus
$(\Psi, H)$ on 
$\prod_{\lambda \in \Lambda} X_\lambda$
such that 
\[ \Psi( \otimes_\lambda f_\lambda)
=  \prod_{\lambda} \Phi_\lambda(f_\lambda)
\]
for all $f_\lambda\in\BMb(X_\lambda)$  with 
$f_\lambda = \car$ for all but finitely many $\lambda\in \Lambda$.}

\smallskip
\noindent
As a matter of fact, there is an analogue for arbitrary
products of  Borel calculi
when one makes appropriate assumptions about positive
lower semi-continuous functions as in Theorem 
\ref{con.t.ext-cont}.
\end{rem}

\section{The Spectral Theorem}\label{s.spt}

Finally, we shall state and prove ``our'' version(s)
of the spectral theorem.

\medskip
\subsection{Bounded Operators, Complex Case}

We start with the bounded
operator version in the case $\K= \C$.

\begin{thm}[Spectral Theorem: Bounded Operators, $\K=\C$]\label{spt.t.bdd}
Let $A_1, \dots, A_d$ be bounded normal and pairwise
commuting  operators on a complex Hilbert space $H$.
Then there is a unique Borel calculus $(\Phi,H)$ on $\C^d$ such that
$\Phi(\bfz_j) = A_j$ for all $j =1, \dots, d$. 
\end{thm}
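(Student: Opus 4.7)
\emph{Proof plan.} Uniqueness is immediate: if $\Phi$ and $\Phi'$ are two Borel calculi on $\C^d$ with $\Phi(\bfz_j) = A_j = \Phi'(\bfz_j)$ for $j=1,\dots,d$, then Theorem \ref{uni.t.Cd}(i) forces $\Phi = \Phi'$.

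For existence, the plan is to reduce everything to the Gelfand representation of a single commutative $C^*$-algebra and then push forward. First, since each $A_j$ is normal and the $A_j$ pairwise commute, Fuglede's theorem (Theorem \ref{uni.t.fuglede}), applied with $T = A_i$, $A = B = A_j$, yields $A_i A_j^* = A_j^* A_i$ for all $i,j$. Hence the unital $C^*$-algebra $\mathcal{A}\subseteq \BL(H)$ generated by $A_1,\dots, A_d, A_1^*,\dots, A_d^*$ is commutative. Being finitely generated, $\mathcal{A}$ is separable, so by Gelfand--Naimark it is $*$-isomorphic to $C(\Omega)$ for a compact \emph{metrizable} Hausdorff space $\Omega$ (the Gelfand spectrum); let $\Gamma: \mathcal{A} \to C(\Omega)$ denote the Gelfand isomorphism and set $\hat{A}_j := \Gamma(A_j)\in C(\Omega)$.

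Next I invoke the construction results of Section \ref{s.con}. The inverse $\Psi_0 := \Gamma^{-1}: C(\Omega) \to \BL(H)$ is a unital $*$-homomorphism, so by Corollary \ref{con.c.cont-metr} it extends uniquely to a Borel calculus $(\Psi, H)$ on $\Omega$. The map
\[
\hat{A} := (\hat{A}_1,\dots,\hat{A}_d) : \Omega \to \C^d
\]
is continuous, in particular Borel measurable, so Corollary \ref{mfc.p.push} produces a Borel push-forward calculus
\[
\Phi := \Psi^{\hat{A}} : \Meas(\C^d, \Borel(\C^d)) \to \Clo(H), \qquad \Phi(f) := \Psi(f\circ \hat{A}).
\]
Finally I check the required identity: $\bfz_j\circ \hat{A} = \hat{A}_j \in C(\Omega)$, so
\[
\Phi(\bfz_j) = \Psi(\hat{A}_j) = \Psi_0(\hat{A}_j) = \Gamma^{-1}(\hat{A}_j) = A_j,
\]
as desired.

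The only step that really carries content is the application of Fuglede to upgrade commutativity of the $A_j$ to commutativity of the $*$-algebra they generate; everything else is bookkeeping, invoking results already proved in the paper (Theorem \ref{uni.t.Cd}, Corollary \ref{con.c.cont-metr}, Corollary \ref{mfc.p.push}). I would expect the only subtlety to be ensuring metrizability of $\Omega$ so that Corollary \ref{con.c.cont-metr} applies without the lower-semicontinuity side condition; this follows from separability of $\mathcal{A}$ (equivalently of $C(\Omega)$). Alternatively, one could bypass metrizability by working with the Baire algebra as in Remark \ref{con.r.cont-baire}, but this is unnecessary here.
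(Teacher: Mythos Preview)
Your proof is correct and follows essentially the same route as the paper's: Fuglede to obtain a commutative $C^*$-algebra, Gelfand to represent it as $\Ce(\Omega)$, extension of the inverse Gelfand map to a Borel calculus, and push-forward along the Gelfand transforms of the $A_j$. The only cosmetic difference is that the paper invokes Theorem~\ref{con.t.ext-cont} directly (which needs no metrizability for existence), whereas you pass through Corollary~\ref{con.c.cont-metr} after observing that $\Omega$ is metrizable because $\mathcal{A}$ is separable---a valid and slightly more explicit alternative.
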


\begin{proof}
Uniqueness follows from Theorem \ref{uni.t.Cd}, 
so we prove existence.  By Fuglede's theorem, the operators $A_1, \dots, A_d$ generate a
commutative unital $C^*$-subalgebra $\calA$ of $\BL(H)$.
By Gelfand's theorem, there is a compact space $X$ and
an  isometric isomorphism $\Psi: \Ce(X) \to \calA$ of
$C^*$-algebras. By Theorem \ref{con.t.ext-cont} this map extends
to a Borel calculus $(\Psi, H)$ on $X$. 
Let $f_j \in \Ce(X)$ be such that $\Phi(f_j) = A_j$
for $j =1 , \dots, d$. Let $(\Phi, H)$ be the 
push-forward of $\Phi$ along the continuous mapping
\[ f = (f_1, \dots, f_d) : X \to \C^d.
\]
Then $(\Phi, H)$ is a measurable calculus such that 
\[ \Phi(\bfz_j) = \Psi( \bfz_j \nach f) = \Psi(f_j)
= A_j \quad \text{for each $j =1, \dots, d,$}
\]
as desired.
\end{proof}

The abovegiven proof rests on Gelfand's theorem. 
If one wants to avoid this result, one may proceed as 
follows.  In a first step, the theorem is reduced to the case of 
self-adjoint operators.
Each normal operator $A_j$ can be written uniquely as
\[ A_j = A_{j 1} + \ui A_{j 2},
\]
where the operators  $A_{j1}$ and $A_{j2}$ are self-adjoint.  Also, the
operators $A_{j k}$  ($k=1,2$, $j = 1, \dots, d$) are pairwise commuting.
Suppose that Theorem \ref{spt.t.bdd} is known provided all operators are
self-adjoint. Then we obtain a Borel functional calculus $\Psi$ on $\C^{2d}$
such that  $\Psi(\bfz_{j k}) = A_{jk}$ for all $j =1, \dots, d$ and $k
= 1,2$.  By Corollary \ref{spc.c.sa-mult}, $\Psi$ is concentrated on $\R^{2d}$
and hence can be regarded as a Borel calculus on $\R^{2d}$. Write
$\bfx_j := \bfz_{j1}$ and $\bfy_j := \bfz_{j2}$, as these coordinate functions are real-valued now. 
Identify $\R^{2d}$ with $\C^d$ via the mapping $\vphi := (x_1, y_1, \dots,
x_d, y_d) \mapsto (x_1 + \ui y_1, \dots, x_d + \ui y_d)$ and let
$\Phi$ be the push-forward of $\Psi$ along $\vphi$. Then
$\Phi$ is a Borel calculus on $\C^d$ and 
\[  \Phi(\bfz_j) = \Psi(\bfx_j + \ui \bfy_j) = \Psi(\bfx_j) + \ui
\Psi(\bfy_j)
= A_{j1} + \ui A_{j2} = A_j \qquad (j =1, \dots, d).
\]
Next, suppose that the theorem is true for $d=1$, and let
$\Phi_j$ be the Borel calculus on $\C$ (concentrated
on $\R$) such that $\Phi(\bfz) = A_j$. Since
the $A_j$ are pairwise commuting, it follows from 
Theorem \ref{uni.t.inter} that the associated functional
calculi $\Phi_j$ are pairwise commuting, too. Therefore, one
may apply Theorem \ref{con.t.prod} to obtain the ``joint functional
calculus'' $\Phi$.

This leaves us to prove Theorem  \ref{spt.t.bdd} for the
case where $d=1$ and $A_1= A$ is self-adjoint. 
In this situation there is
a remarkably elementary proof, which was already known
to Halmos \cite{Halmos1963}.
For convenience, we give the short argument.

\begin{proof}[Proof of Theorem \ref{spt.t.bdd} for 
a single self-adjoint operator]
Let $A$ be a bounded, self-adjoint 
operator on $H$ and let  $a, b \in \R$ such that 
$\spec(A)\subseteq [a,b]$. 
For $p \in \C[z]$ denote by $p^*$ the polynomial
$p^*(z) := \konj{p(\konj{z})}$, and let $q := p^*p$.  
By the spectral inclusion theorem for polynomials, 
\[ \spec(q(A)) \subseteq q(\spec(A)).
\]
Now observe that $p(A)^*p(A) = p^*(A)p(A)= q(A)$. Hence,
$q(A)$ is self-adjoint and therefore its norm equals its spectral
radius (see \cite[Section 13.2]{HaaseFA} for an elementary
proof). Since $q= \abs{p}^2$ on $\R$,   
\begin{align*}
 \norm{p(A)}^2 & = \norm{p(A)^*p(A)} =
\norm{q(A)} = r(q(A)) = \sup\{ \abs{\lambda} \suchthat \lambda
\in \spec(q(A))\} 
\\ & \le
\sup\{ \abs{q(\mu)} \suchthat \mu \in \spec(A)\}
\le \norm{q}_{\infty, \spec(A)} \le \norm{p}^2_{\infty, [a,b]}.
\end{align*}
It follows that the polynomial functional calculus for $A$
is contractive for the supremum-norm on $[a,b]$. By 
Weierstrass' theorem, the polynomials are dense in $\Ce[a,b]$, and
hence there is a contractive linear map
\[ \Phi: \Ce[a,b] \to \BL(H)
\]
such that $\Phi(p) = p(A)$ for $p \in \C[z]$. It is easily
seen that $\Phi$ is a unital $*$-homomorphism. 
By Corollary \ref{con.c.cont-metr}, $\Phi$ extends uniquely to a
Borel functional calculus on $[a,b]$, and  pushing
that forward along the inclusion map, we obtain
the desired Borel functional calculus on $\C$. 
\end{proof}

\begin{rem}\label{spt.r.elem}
A simlilarly elementary proof, which even yields the better estimate
$\norm{p(A)}\le \norm{p}_{\infty, \spec(A)}$ for polynomials $p$, 
is given
in the lecture notes \cite[Theorem E.3]{ArendtVogtVoigtFMEE} by
Arendt, Vogt and Voigt.  It is inspired by the proof of Riesz and
Sz.-Nagy in \cite[VII, 106]{RieszNagyFA}.
Compare this also with Lang's approach in
\cite[XVIII, \S 4]{LangRFA}.
\end{rem}

Applied to a single operator,
Theorem \ref{spt.t.bdd} tells that 
each bounded normal operator $A$ on a
complex Hilbert space $H$ comes with 
a unique Borel calculus $(\Phi_A,H)$ 
on $\C$ such that $\Phi_A(\bfz) = A$. 
This calculus is concentrated on 
$\spec(A)$ (Corollary \ref{spc.c.single}), but also on
$\spec(A)\ohne\{\lambda\}$ whenever
$\lambda$ is not in the point spectrum
of $A$ (cf. Theorem \ref{spc.t.main}). Since
$\spec(A)$ is the smallest closed set
on which $\Phi$ is supported, we conclude
that isolated points of the spectrum must be
eigenvalues. (This can, of course, be proved in a more elementary way.)

Uniqueness of the calculus 
justifies the common habit of writing
\[ f(A) := \Phi_A(f)  \qquad (f\in \Borel(\C)).
\]
Suppose that $f(A)$ is again bounded.
Then one has the {\emdf composition rule} 
\beq\label{spt.eq.cr}  
g(f(A)) = (g\nach f)(A) \qquad (g\in \Meas(\C))
\eeq
simply because, by uniqueness, the
push-forward along $f$ of $\Phi_A$
must coincide with $\Phi_{f(A)}$.

\medskip
\subsection{Bounded Operators, Real Case}

We now consider the case that $\K= \R$. Here,
the operators we start with need to be  self-adjoint.
(The reason is  that any measurable
functional calculus maps real functions to self-adjoint operators but,
as the example
\[ A = \begin{pmatrix}  0 & 1 \\ -1 & 0
\end{pmatrix} \quad \text{on}\quad H = \R^2
\]
shows, a normal operator on a real Hilbert space need not be
self-adjoint. Hence,  in the real case normality is not
strong enough to imply the spectral theorem.)

\begin{thm}[Spectral Theorem: Bounded Operators, $\K= \R$]\label{spt.t.bdd-R}
Let $A_1, \dots, A_d$ be bounded self-adjoint and pairwise
commuting  operators on a real Hilbert space $H$.
Then there is a unique Borel calculus $(\Phi,H)$ on $\R^d$ such that
$\Phi(\bfz_j) = A_j$ for all $j =1, \dots, d$. 
\end{thm}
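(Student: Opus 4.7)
The plan is to mirror the elementary treatment of the complex bounded self-adjoint case, noting that the ingredients used there are scalar-field agnostic. Uniqueness follows immediately from Theorem \ref{uni.t.Cd} with $\K=\R$. For existence, I first handle a single operator $A=A_j$: pick $a,b\in\R$ with $\spec(A)\subseteq[a,b]$ and observe that for $p\in\R[z]$ the operator $p(A)^*p(A)=p^2(A)$ is self-adjoint, so
\[ \norm{p(A)}^2 = \norm{p^2(A)} = r(p^2(A)) \le \norm{p}_{\infty,[a,b]}^2 \]
by the polynomial spectral inclusion theorem together with the identity $r(S)=\norm{S}$ for self-adjoint $S$. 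Weierstrass' theorem (real version) extends this to a contractive unital $*$-homomorphism $\Ce[a,b]\to\BL(H)$, which Corollary \ref{con.c.cont-metr} promotes to a Borel calculus on $[a,b]$; pushing forward along the inclusion $[a,b]\hookrightarrow\R$ yields a Borel calculus $(\Phi_j,H)$ on $\R$ with $\Phi_j(\bfz)=A_j$.

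To form the joint calculus, I would apply Theorem \ref{con.t.prod} iteratively on the compact boxes $\prod_{j}[a_j,b_j]$. For the binary step ($i\ne j$): commutativity of $A_i$ and $A_j$ propagates to $p(A_i),q(A_j)$ for real polynomials $p,q$, then by uniform density to $\Phi_i(f),\Phi_j(g)$ for all continuous $f,g$, and Theorem \ref{con.t.prod} produces the joint calculus $\Phi_i\tensor\Phi_j$. For the inductive step, suppose $\Psi_k := \Phi_1\tensor\cdots\tensor\Phi_k$ has been built on $\prod_{j\le k}[a_j,b_j]$. On an elementary tensor $f_1\tensor\cdots\tensor f_k$ one has $\Psi_k(f_1\tensor\cdots\tensor f_k)=\Phi_1(f_1)\cdots\Phi_k(f_k)$, and each factor commutes with $\Phi_{k+1}(g)$ by the binary case combined with Theorem \ref{uni.t.inter}; Stone--Weierstrass extends this commutation to all continuous functions on $\prod_{j\le k}[a_j,b_j]$, and a further application of Theorem \ref{con.t.prod} delivers $\Psi_{k+1}$. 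After $d-1$ steps, the resulting Borel calculus $\Psi_d$ on $\prod_{j=1}^d[a_j,b_j]$ satisfies $\Psi_d(\bfz_j)=A_j$; pushing forward along the inclusion into $\R^d$ produces the required calculus $\Phi$.

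The main obstacle is bookkeeping: at each inductive step I must verify both the commutation of the partial product calculus with the next factor on continuous functions, and the identification of its values on elementary tensors. Both reduce to the density of algebraic tensor products in $\Ce$ (Stone--Weierstrass) together with the closure properties of the intertwining set of Theorem \ref{uni.t.inter}, so no new tool is needed beyond the machinery already developed in the paper. An alternative route would be to complexify $H$, extend the $A_j$ to self-adjoint operators on $H_\C$, apply Theorem \ref{spt.t.bdd}, observe that the resulting calculus is concentrated on $\R^d$ by Corollary \ref{spc.c.sa-mult}, and descend it to $H$ using that conjugation intertwines with each $\tilde A_j$; this works but requires extra care in transporting the calculus back across the complexification.
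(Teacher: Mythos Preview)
Your proof is correct and essentially inverts the paper's presentation. The paper's primary argument complexifies $H$, applies the complex spectral theorem (Theorem~\ref{spt.t.bdd}) to the extensions $A_j^\C$, observes via Corollary~\ref{spc.c.sa-mult} that the resulting calculus is concentrated on $\R^d$, and then descends to $H$ by taking the part of the calculus in the real subspace via Lemma~\ref{mfc.l.prt}. Your direct real-scalar argument---boundedness of the real polynomial calculus, Weierstrass, Corollary~\ref{con.c.cont-metr} for a single operator, then an iterated application of Theorem~\ref{con.t.prod}---is precisely the alternative the paper sketches in its closing paragraph on this theorem but does not write out. So the two routes you describe both appear in the paper, only with the roles of ``main proof'' and ``alternative'' swapped. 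The direct approach stays entirely over $\R$ and avoids the descent from $H^\C$; the complexification route buys you Theorem~\ref{spt.t.bdd} as a black box and spares the tensor-product induction.
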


\begin{proof}
Complexify $H$ to $H^\C:= H \oplus \ui H$ and 
let $A_j^\C$ be the canonical $\C$-linear extension of $A_j$  to 
$H^\C$. Then the $A_j^\C$ are bounded, pairwise commuting self-adjoint
operators on $H^\C$.  
Let $(\Psi, H^\C)$ be the associated Borel
calculus on $\C^d$. By Corollary \ref{spc.c.sa-mult}, 
$\Psi$ is concentrated on $\R^d$. So, effectively,
$\Psi$ is a Borel  calculus on $\R^d$. 

Next, restrict $\Psi$ to real-valued functions, view $H^\C$ as a real
Hilbert space and let $\Phi$ be the part of $\Psi$ in the
real subspace $H \oplus \{0\} \subseteq H^\C$. We claim that $\Phi$ is
a Borel functional calculus.
To prove this, let $P$ be the orthogonal projection with range
$H$ (i.e.,  projection onto the first component). By construction and the self-adjointness of
the operators $A_j^\C$, $P\Psi(\bfz_j) = PA_j^\C = A_j^\C P =
P \Psi(\bfz_j)$ for all $j = 1, \dots, d$. By Lemma \ref{uni.l.Cd},
$P\Psi(f) = \Psi(f)P$ holds for all $f\in \Meas(X, \Sigma; \R)$. 
Hence, Lemma \ref{mfc.l.prt} tells that $\Phi$ is a measurable
functional calculus. 

Finally, observe that 
\[ \Phi(\bfz_j) = A_j^\C\cap (H \oplus\{0\})\oplus (H \oplus\{0\}) = 
 A_j
\]
for each $j= 1, \dots, d$, and we are done. 
\end{proof}

An alternative to the given proof
proceeds as follows.  Let $\calA$ be the real unital 
$C^*$-subalgebra
of $\BL(H)$, generated by the operators $A_1, \dots, A_d$. 
We can view $\calA$ as a subset of $\BL(H^\C)$ (via the isometric
embedding  $A \mapsto A^\C$ as in the proof above). 
By the following corollary of Gelfand's theorem, 
communicated to us by Jürgen
Voigt, there is a compact Hausdorff space $K$ and an isometric isomorphism
$\Psi: \Ce(K;\R) \to \calA$.  Now proceed exactly as 
in the proof of Theorem \ref{spt.t.bdd}.

\begin{prop}
Let $\calB$ be  unital $C^*$-algebra and $\calA \subseteq \calB$
a real, closed, unital and commutative $*$-subalgebra of $\calB$
consisting entirely of self-adjoint elements.
Then  there is a compact Hausdorff space $K$ and a unital and 
isometric $*$-isomorphism $\Psi: \Ce(K;\R) \to \calA$. 
\end{prop}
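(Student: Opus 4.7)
The plan is to complexify $\calA$ inside $\calB$, apply the standard (complex) commutative Gelfand--Naimark theorem, and then identify $\calA$ with the real-valued functions via the hypothesis of self-adjointness.

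First I would define $\calA_\C := \calA + \ui \calA \subseteq \calB$ and verify that this is a unital, commutative, closed $*$-subalgebra of $\calB$. The algebraic checks (closure under sums, complex scalar multiples, products, and the involution) are routine, using that $\calA$ itself is a real commutative $*$-subalgebra. For closure in norm, suppose $a_n + \ui b_n \to c$ in $\calB$ with $a_n, b_n \in \calA$. Applying the involution, $a_n - \ui b_n \to c^*$, and hence $a_n \to \tfrac{1}{2}(c + c^*)$ and $b_n \to \tfrac{1}{2\ui}(c - c^*)$ in $\calB$. Since $\calA$ is norm-closed in $\calB$, the limits lie in $\calA$, whence $c \in \calA_\C$. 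The sum is moreover direct: if $a = \ui b$ with $a, b \in \calA$, then self-adjointness of both sides forces $b^* = -b$ and $b^* = b$, so $b = 0$ and $a = 0$.

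Next, since $\calA_\C$ is a unital commutative $C^*$-subalgebra of $\calB$, the commutative Gelfand--Naimark theorem provides a compact Hausdorff space $K$ (its Gelfand spectrum) together with a unital isometric $*$-isomorphism $\Psi_\C : \Ce(K;\C) \to \calA_\C$. Under $\Psi_\C$, self-adjoint elements correspond precisely to real-valued continuous functions: indeed $\Psi_\C(f)^* = \Psi_\C(\konj{f})$ and $\Psi_\C$ is injective, so $\Psi_\C(f)^* = \Psi_\C(f)$ iff $\konj{f} = f$ iff $f$ is real-valued.

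Finally I would identify $\calA$ inside $\calA_\C$ as the set of self-adjoint elements. One inclusion is by hypothesis. Conversely, if $a + \ui b \in \calA_\C$ (with $a, b \in \calA$) is self-adjoint, then $a - \ui b = a + \ui b$, giving $b = 0$ and so the element equals $a \in \calA$. Hence $\Psi_\C^{-1}(\calA) = \Ce(K;\R)$, and restricting $\Psi_\C$ yields the desired unital isometric $*$-isomorphism $\Psi : \Ce(K;\R) \to \calA$.

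The only step that requires any care is the norm-closedness of $\calA_\C$, because one must extract the real and imaginary parts of a limit via the involution; everything else reduces to bookkeeping plus the standard complex Gelfand--Naimark theorem. No new tools beyond those theorem are needed.
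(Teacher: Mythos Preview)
Your proof is correct and follows essentially the same route as the paper's: complexify $\calA$ to $\calA + \ui\calA$, check that this is a closed commutative unital $C^*$-subalgebra of $\calB$, apply the complex Gelfand--Naimark theorem, and then identify $\calA$ with the self-adjoint part, i.e., with $\Ce(K;\R)$. Your write-up is simply more explicit on the details (directness of the sum, closure via extracting real and imaginary parts through the involution, and the identification of $\calA$ with the self-adjoint elements of $\calA_\C$), which the paper compresses into one line each.
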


\begin{proof}
Let $\calA^\wedge := \calA + \ui \calA$. Then $\calA^\wedge$ is
a unital, commutative, $*$-subalgebra of $\calB$. Moreover,
it is closed, since $\calA$ is closed and the 
mapping 
\[ c = a + \ui b \mapsto (a,b) = (\tfrac{1}{2}(c+c^*), \tfrac{1}{2\ui}(c-c^*) )
\]
is bounded. Then, by Gelfand's theorem, there is a compact Hausdorff
space $K$ and a unital and isometric $C^*$-isomorphism
$\Psi: \Ce(K;\C) \to \calA^\wedge$. Clearly,
$\Psi(\Ce(K;\R)) = \calA$. 
\end{proof}

Actually, in order to 
arrive at a continuous calculus
$\Psi: \Ce(K;\R) \to \calA$ passing to a complexification
is not necessary. Instead, one may apply one of the
existing purely real characterizations of real $\Ce(K)$-spaces,
see e.g., \cite{AlbiacKalton2007}.

\medskip

Finally, there is an alternative route to Theorem \ref{spt.t.bdd-R}
avoiding  both complexification and Gelfand-type theorems. 
As in the complex case, one can reduce the theorem
to the case $d=1$ and the boundedness of the real polynomial calculus. 
The latter can be obtained, e.g., by the proofs given in 
\cite[Theorem E.3]{ArendtVogtVoigtFMEE} or
\cite[XVIII, \S 4]{LangRFA}, already mentioned in Remark \ref{spt.r.elem} above.

\medskip

\subsection{Unbounded Operators}

The spectral theorem for (in general) unbounded operators
shall be reduced to the one for bounded operators.
To this aim,  we introduce for
any densely defined closed operator $A$ on a $\K$-Hilbert space $H$ 
the bounded operators
\beq 
T_A := (1 + A^*A)^{-1}\quad\text{and}\quad
 S_A := AT_A = 
A (1+ A^*A)^{-1}.
\eeq
Note that {\em if} $\Phi$ is a Borel calculus on $\K$ for 
$A$, then 
\[ T_A= \Phi\Bigl( (1 + \abs{\bfz}^2)^{-1}\Bigr) 
\quad \text{and}\quad
S_A = \Phi\Bigl( \bfz (1 + \abs{\bfz}^2)^{-1}\Bigr).
\]
The idea is, roughly, to apply Theorem \ref{spt.t.bdd}
to the operators $T_A$ and $S_A$ and then
construct a Borel calculus for $A$ as a push-forward.
In order to succeed with this idea, we need 
the following properties of the operators $T_A$ and $S_A$.

\begin{lem}\label{spt.l.TASA}
Let $A$ be  a densely defined and closed operator on a Hilbert space
$H$. Then the operators $T_A,S_A$ have the following properties:
\begin{aufzi}
\item $T_A$ is an injective, bounded and positive 
self-adjoint operator with $\norm{T_A}\le 1$;
$S_A$ is a bounded operator.

\item $A = \cls{S_AT_A^{-1}}$.

\item If $A$ is normal then $T_{A^*} = T_A$ and 
$S_{A^*}= S_A^*$, and one has $A = T_A^{-1}S_A$. Moreover,
$T_A S_A = S_A T_A$ in this case.

\item If $A$ is self-adjoint or normal, then so is  $S_A$.

\end{aufzi} 
\end{lem}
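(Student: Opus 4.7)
Parts (i) and (ii) are the classical von Neumann theorem on the self-adjointness of $1+A^*A$, provable via the orthogonal graph decomposition
\[ H \oplus H = G(A) \oplus J\, G(A^*), \qquad J(u,v) := (-v, u), \]
which is available because $A$ is closed and densely defined. Parts (iii) and (iv) then reduce, for normal $A$, to showing that the bounded operator $T_A$ commutes with both $A$ and $A^*$; once these two commutations are established, everything else follows by direct algebra.

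\textbf{Steps for (i) and (ii).} Given $x\in H$, decomposing $(x,0)=(u,Au)+(-A^*v,v)$ forces $v=-Au$ together with $(1+A^*A)u=x$. This identifies $T_Ax=u$ as a well-defined map $H\to\dom(A^*A)$ with $\|T_Ax\|^2+\|S_Ax\|^2\le\|x\|^2$ by Pythagoras, so both $T_A$ and $S_A$ are bounded of norm at most $1$. Positivity and self-adjointness of $T_A$ follow from $\langle T_Ax,x\rangle=\|u\|^2+\|Au\|^2\ge 0$ and the symmetry of $1+A^*A$; injectivity is built into the construction. For (ii), the same decomposition shows that any $(w,Aw)\in G(A)$ orthogonal to the graph of $A|_{\dom(A^*A)}$ satisfies $w\perp \ran(1+A^*A)=H$, and hence $w=0$; this identifies $\dom(A^*A)=\ran(T_A)$ as a core for $A$, so $\overline{S_AT_A^{-1}}=\overline{A|_{\dom(A^*A)}}=A$.

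\textbf{Steps for (iii), (iv), and the main obstacle.} For normal $A$, the identity $A^*A=AA^*$ (as operators, domains included) gives $T_{A^*}=(1+AA^*)^{-1}=T_A$ immediately. The crux is to show $AT_A=T_AA$ and $A^*T_A=T_AA^*$; after this, the remaining claims of (iii) collapse to algebraic manipulations: $S_A^*=(AT_A)^*=T_AA^*=A^*T_A=S_{A^*}$, $T_AS_A=T_AAT_A=AT_A^2=S_AT_A$, and $T_A^{-1}S_A=T_A^{-1}AT_A$ agrees with $A$ on the core $\dom(A^*A)$, hence is exactly $A$. Part (iv) is then a short symmetric computation using the commutations and $A^*A=AA^*$: in the self-adjoint case $S_A^*=T_AA=AT_A=S_A$, and in the normal case $S_AS_A^*=AT_A\cdot T_AA^*=T_A^2AA^*=T_A^2A^*A=T_AA^*\cdot AT_A=S_A^*S_A$. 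I expect the main technical obstacle to be the commutation $AT_A=T_AA$ for normal (non-self-adjoint) $A$: the naive identity $A(1+A^*A)u=(1+A^*A)Au$ requires $Au\in\dom(A^*A)$, a strictly stronger condition than $u\in\dom(A^*A)$. To handle this I would pass to the dense subspace $\ran(T_A^2)$ of ``doubly smoothed'' vectors, on which the relevant $A$- and $A^*$-images lie in the necessary sub-domains so that normality can be applied term by term; boundedness of $AT_A$ then extends the identity from a dense subspace to all of $H$, and the argument for $A^*$ is verbatim the same.
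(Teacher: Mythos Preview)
Your treatment of (i) and (ii) via the von Neumann graph decomposition is correct and more explicit than the paper, which simply cites Rudin for these facts.

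For the commutation $T_AA\subseteq AT_A$ in (iii), your ``doubly smoothed'' workaround is unnecessary. The paper observes that the obstacle you name dissolves on its own: for $x\in\dom(A)$ set $y:=T_Ax$; then $A^*Ay=x-y\in\dom(A)$ automatically, so $A(A^*Ay)$ is defined. By associativity this says $Ay\in\dom(AA^*)=\dom(A^*A)$, and then
\[
(1+A^*A)Ay \;=\; Ay + AA^*(Ay) \;=\; Ay + A(A^*Ay) \;=\; Ay + A(x-y) \;=\; Ax,
\]
hence $T_AAx=Ay=AT_Ax$. No passage to $\ran(T_A^2)$ is needed, and the argument works directly for every $x\in\dom(A)$.

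There is, however, a genuine gap in your derivation of $A=T_A^{-1}S_A$. Knowing that $T_A^{-1}S_A$ agrees with $A$ on the core $\dom(A^*A)$, together with $T_A^{-1}S_A$ being closed, yields only the inclusion $A\subseteq T_A^{-1}S_A$; it does \emph{not} give equality, because $T_A^{-1}S_A$ could in principle be a proper closed extension. The paper closes this gap by a separate argument: if $x\in\dom(T_A^{-1}S_A)$, i.e.\ $AT_Ax\in\ran(T_A)=\dom(A^*A)=\dom(AA^*)$, then
\[
x=(1+A^*A)T_Ax = T_Ax + A^*(AT_Ax)\in\dom(A),
\]
which forces $\dom(T_A^{-1}S_A)\subseteq\dom(A)$. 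You should add this step.

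A minor domain sloppiness: your chain $S_A^*=(AT_A)^*=T_AA^*$ is not literally correct, since $T_AA^*$ has domain $\dom(A^*)\neq H$. The paper instead writes $S_A=\overline{T_AA}$ and uses $(T_AA)^*=A^*T_A^*=A^*T_A=S_{A^*}$, where the adjoint formula is valid because the \emph{left} factor $T_A$ is bounded.
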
 

\begin{proof}
a)\  This is standard Hilbert space operator theory, see
\cite[Theorem 13.13]{RudinFA}.

\noindent
b)\  $S_A T_A^{-1} = A T_A T_A^{-1} = A \res{D}$, where
$D = \ran(T_A) = \dom{A^*A}$ is a core for $A$
\cite[Theorem 13.13]{RudinFA}.

\prfnoi
c)\ Suppose that $A$ is normal. Then $(A^*)^* = A$ 
 since $A$ is closed, and hence
 $T_{A^*}= (1+ (A^*)^*A^*)^{-1} = (1 + A A^*)^{-1}
= (1 + A^*A)^{-1} = T_A$. Next, we claim that
\beq\label{spt.eq.TASA-aux}   
T_A A \subseteq AT_A = S_A.
\eeq
Proof of claim:  Let 
$x\in \dom(A)$ and $y := T_Ax$. Then $y + A^*Ay = x$ and
hence $A^*Ay = x - y \in \dom(A)$. Applying $A$ 
and using the normality we obtain
\[ Ax = Ay + AA^*Ay = (\Id + AA^*)Ay = (\Id + A^*A) Ay,
\]
which results in  $T_AAx = Ay = AT_Ax = S_Ax$.

A consequence of  \eqref{spt.eq.TASA-aux} is that
\[ T_A S_A = T_A A T_A = A T_A T_A = S_A T_A
\]
since $\ran(T_A) \subseteq \dom(A)$. Next, as
$\dom(A)$ is dense, 
\[ S_A^* = (T_A A)^* = A^* T_A = A^* T_{A^*} = S_{A^*}.
\]
Furhermore, we obtain $A \subseteq T_A^{-1}S_A$ from \eqref{spt.eq.TASA-aux}. In order
to establish equality here, let $x \in \dom(T_A^{-1}S)$, i.e, $S_A x =
AT_Ax \in \ran(T_A) = \dom(A^*A) = \dom(AA^*)$. Then
\[ x = (\Id + A^*A)T_Ax = T_A x + A^*(A T_A x) 
\in \dom(A),
\]
as desired. 

\prfnoi
d)\ If $A$ is self-adjoint, then $S_A^* = S_{A^*}
= S_A$ by c).  If $A$ is normal then
\begin{align*}
 S_A^* S_A & = S_{A^*}S_A = A^*T_{A^*} A T_A
= A^* T_A A T_A  
= A^* A T_A^2 = A A^* T_{A^*}^2 
\\ & = A T_{A^*} A^* T_{A^*}
= A T_A A^* T_{A^*} = S_A S_{A^*} 
= S_A S_A^*.\qedhere
\end{align*}
 \end{proof}

We say that two normal operators $A$ and $B$ 
are {\emdf strongly commuting}, if the bounded
operators $T_A, S_A, T_B, S_B$ are pairwise
commuting.

\begin{thm}[Spectral Theorem: General Case]\label{spt.t.unb}
Let $A_1, \dots, A_d$ be pairwise strongly commuting normal operators
on a Hilbert space $H$, all self-adjoint if $\K = \R$. 
Then there is a unique Borel calculus $(\Phi,H)$ on
$\K^d$ such that $\Phi(\bfz_j) = A_j$
for all $j=1, \dots, d$.  
\end{thm}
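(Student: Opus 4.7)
The plan is to reduce the claim to the bounded-operator spectral theorem applied to the $2d$ bounded operators $T_{A_1}, S_{A_1}, \dots, T_{A_d}, S_{A_d}$. Uniqueness is immediate from Theorem~\ref{uni.t.Cd}(i), since the coordinate functions $\bfz_j$ lie among the determining family. For existence, Lemma~\ref{spt.l.TASA} ensures that the $2d$ operators above are normal (each $T_{A_j}$ is even positive self-adjoint, and each $S_{A_j}$ is self-adjoint whenever $A_j$ is); by the strong commutativity hypothesis they pairwise commute. Applying Theorem~\ref{spt.t.bdd} (resp.\ Theorem~\ref{spt.t.bdd-R} in the real case) yields a Borel calculus $(\Psi, H)$ on $\K^{2d}$ whose coordinate functions I shall denote by $\mathbf{p}_j, \mathbf{q}_j$, such that $\Psi(\mathbf{p}_j) = T_{A_j}$ and $\Psi(\mathbf{q}_j) = S_{A_j}$ for all $j$.

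Next I would identify the support of $\Psi$ with the image in $\K^{2d}$ of the ``Cayley-type'' embedding $z \mapsto \bigl((1{+}|z_j|^2)^{-1},\, z_j(1{+}|z_j|^2)^{-1}\bigr)_{j=1}^d$. From $T_{A_j}(\Id + A_j^*A_j) = \Id$ and $S_{A_j} = A_j T_{A_j}$ one derives the bounded-operator identity $S_{A_j}^* S_{A_j} = T_{A_j} - T_{A_j}^2$. Combined with $0 \le T_{A_j} \le \Id$ and the injectivity of $T_{A_j}$ from Lemma~\ref{spt.l.TASA}, Theorem~\ref{spc.t.main} together with Lemma~\ref{pvm.l.null} translate these facts into the $\Psi$-almost-everywhere scalar identities
\[ \mathbf{p}_j \in (0,1] \qquad \text{and} \qquad |\mathbf{q}_j|^2 = \mathbf{p}_j(1 - \mathbf{p}_j) \qquad (j=1,\dots,d). \]
Consequently $\Psi$ is concentrated on the Borel set $Y \subseteq \K^{2d}$ cut out by these relations, which is precisely the image of the above embedding.

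Finally, I would push $\Psi$ forward to $\K^d$ via the Borel map $\psi : \K^{2d} \to \K^d$ defined by $\psi_j(\mathbf{p}, \mathbf{q}) := \mathbf{q}_j/\mathbf{p}_j$ where $\mathbf{p}_j \neq 0$ and $0$ otherwise. By Corollary~\ref{mfc.p.push}, $\Phi := \Psi^\psi$ is a Borel calculus on $\K^d$. Since $\mathbf{p}_j \neq 0$ $\Psi$-a.e., Lemma~\ref{pvm.l.null}(c) gives $\Phi(\bfz_j) = \Psi(\psi_j) = \Psi(\mathbf{q}_j/\mathbf{p}_j)$. With the bounded anchor $e := \mathbf{p}_j$ (for which $\Psi(e) = T_{A_j}$ is injective and $e \cdot \mathbf{q}_j/\mathbf{p}_j = \mathbf{q}_j$ is bounded), the regularization identity of Section~\ref{mfc.s.det} produces $\Psi(\mathbf{q}_j/\mathbf{p}_j) = \Psi(e)^{-1}\Psi(\mathbf{q}_j) = T_{A_j}^{-1}S_{A_j}$, which equals $A_j$ by Lemma~\ref{spt.l.TASA}(c).

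I expect the main obstacle to be the middle step: faithfully transferring the chain of operator facts (positivity, norm bound, injectivity, and $|S_{A_j}|^2 = T_{A_j} - T_{A_j}^2$) into sharp $\Psi$-a.e.\ scalar relations. It is precisely the concentration on $Y$ that makes $\psi$ act as the inverse of the Cayley-type embedding and so legitimises the push-forward step; any looseness here would break the identification $\Phi(\bfz_j) = A_j$ in the final paragraph.
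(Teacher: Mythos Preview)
Your proposal is correct and follows essentially the same route as the paper: apply the bounded spectral theorem to the $2d$-tuple $(T_{A_j}, S_{A_j})_j$, observe that the resulting calculus $\Psi$ is concentrated where each $\mathbf{p}_j > 0$, and push forward along $(\mathbf{p},\mathbf{q}) \mapsto (\mathbf{q}_j/\mathbf{p}_j)_j$, invoking Lemma~\ref{spt.l.TASA}(c) for the final identification. Your middle step, however --- pinning down the full Cayley image $Y$ via the additional relation $|\mathbf{q}_j|^2 = \mathbf{p}_j(1-\mathbf{p}_j)$ --- is correct but unnecessary: the paper uses only that $T_{A_j}$ is injective, positive and contractive to obtain concentration on $(0,1]^d \times \K^d$, and this alone already makes $\mathbf{q}_j/\mathbf{p}_j$ well defined and yields $\Psi(\mathbf{q}_j/\mathbf{p}_j) = T_{A_j}^{-1}S_{A_j} = A_j$; so the ``main obstacle'' you anticipate does not in fact arise.
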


\begin{proof}
Uniqueness is clear by Theorem \ref{uni.t.Cd}.
For existence,  we apply Theorem \ref{spt.t.bdd} to 
the tuple
$(T_{A_1}, \dots, T_{A_d}, 
S_{A_1}, \dots, S_{A_d})$
to obtain a unique 
Borel functional calculus
$\Psi$ on $\K^{2d}$ such that 
\[  \bft_j = T_{A_j},\quad \bfs_j = S_{A_j},
\]
where $\bft_1, \dots \bft_d, \bfs_1,\dots,  \bfs_d$ are just the
coordinate projections. 

Since each $T_{A_j}$ is self-adjoint, positive, contractive  and 
injective, its associated
Borel calculus is concentrated 
on $(0, 1]$. It follows that 
$\Psi$ is concentrated on 
$(0,1]^d \times \K^d$. 
Define
\[ f : (0,1]^d \times \K^d 
\to \K^d,
\quad f(t_1,\dots, t_d,s_1, \dots, s_d)
:= (s_1/t_1, \dots, s_d/t_d)
\]
and let $\Phi$ be the push-forward 
functional calculus of $\Psi$ along $f$.
Then
\[ \Phi(\bfz_j) =  \Psi(\bft_j^{-1}\bfs_j)
= \Psi(\bft_j)^{-1} \Psi(\bfs_j)
= T_{A_j}^{-1} S_{A_j} = A_j.
\]
The proof is complete. 
\end{proof}

By Theorem \ref{spt.t.unb}, 
each normal (self-adjoint) operator, bounded or unbounded, on a
complex (real) Hilbert space $H$ comes with a
unique Borel calculus $\Phi_A$ on 
$\C$ ($\R$) such that $\Phi_A(\bfz) = A$. 
We call this the Borel calculus
{\emdf for $A$} or {\emdf associated with
$A$}. As before, one writes
\[ f(A) := \Phi_A(f)\qquad (f\in \Meas(\K)).
\]
The Borel calculus for $A$ is concentrated
on $\spec(A)$ (Corollary \ref{spc.c.single}) and, as in the bounded case, isolated spectral points
must be eigenvalues. The composition
rule
\[ g(f(A)) = (g\nach f)(A)
\]
now holds universally, for the same reason
as in the bounded operator case.

\medskip

\subsection{Strong Commutativity}

Formally, our notion of strong commutativity 
differs from that of Schmüdgen  from   
\cite{SchmuedgenUOH}. Instead of the pair of operators
$(T_A, S_A)$, Schmüdgen  employs the notion of the   
{\emdf bounded transform}
\[ Z_A := \Bigl( \frac{\bfz}{\sqrt{1 + \abs{\bfz}^2}}\Bigr)(A)
\]
of a normal operator $A$. 
Alternatively, one can write
\[ Z_A = A \bigl( (1 + A^*A)^{-1} \bigr)^\frac{1}{2},
\]
where the square root is defined via the
(continuous) functional calculus
for the self-adjoint operator
$(1 + A^*A)^{-1}$.   The following
proposition is the major step to showing that both notions
of strong commutatitvity agree.

\begin{prop}\label{spt.p.strongcom}
Let $A$ be a normal operator on $H$, self-adjoint if $\K= \R$.  
Then for $B \in \BL(H)$ the following assertions
are equivalent:
\begin{aufzii}
\item $BA \subseteq AB$,  i.e., $B$ commutes with $A$. 
\item $Bf(A) \subseteq f(A)B$ for all 
$f\in \Meas(\K)$.
\item $B$ commutes with $Z_A$.
\item $B$ commutes with $T_A$ and $S_A$.
\end{aufzii}
If $B$ is also normal (self-adjoint if $\K =\R$), then {\rm (i)-(iv)}
are also equivalent to the following
assertions:
\begin{aufzii} \setcounter{aufzii}{4}
\item $A$ and $B$ are strongly commuting.
\item $Z_A$ and $Z_B$ commute. 
\end{aufzii}
\end{prop}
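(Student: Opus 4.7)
The plan is to deduce all equivalences from Theorem \ref{uni.t.inter} together with Lemma \ref{uni.l.Cd}, applied with $d=1$. Let $\Phi_A$ denote the Borel calculus for $A$ (which exists and is unique by Theorem \ref{spt.t.unb}) and form the intertwining set
\[ E := E(\Phi_A, \Phi_A; B) = \{f \in \Meas(\K) \suchthat Bf(A) \subseteq f(A)B\}. \]
By Theorem \ref{uni.t.inter}, $E$ enjoys properties 1)--8). Now, (i) just says $\bfz \in E$, (ii) says $E = \Meas(\K)$, (iii) says $\bfz(1+\abs{\bfz}^2)^{-1/2} \in E$, and (iv) says both $(1+\abs{\bfz}^2)^{-1}$ and $\bfz(1+\abs{\bfz}^2)^{-1}$ lie in $E$. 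Lemma \ref{uni.l.Cd} (with $d=1$, the set $X=\K$) tells us that any one of these conditions on $E$ implies $E = \Meas(\K)$, so the four conditions (i)--(iv) are all equivalent.

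For the second part I assume $B$ is normal (self-adjoint if $\K=\R$), so the first part of the proposition also applies with the roles of $A$ and $B$ interchanged. Observe that (v) is the statement that each of the bounded operators $T_A, S_A$ commutes with each of $T_B, S_B$. Applied to the normal operator $B$ with the bounded operator $T_A$, the equivalence (i)$\iff$(iv) of the first part reads: $T_A$ commutes with $B$ if and only if $T_A$ commutes with both $T_B$ and $S_B$. The same argument works with $S_A$ in place of $T_A$. Taking the conjunction, (v) is equivalent to: $B$ commutes with both $T_A$ and $S_A$, i.e., to condition (iv) for $A$ and $B$. Thus (v)$\iff$(iv).

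For (vi)$\iff$(iii), the same idea works with $Z$ replacing the pair $(T,S)$: applying the equivalence (i)$\iff$(iii) of the first part to the normal operator $B$ with the bounded operator $Z_A$, we get $Z_A B = B Z_A$ if and only if $Z_A$ commutes with $Z_B$. Since (iii) for $A$ is exactly $Z_A B = B Z_A$, this gives (iii)$\iff$(vi). Combining with the first part of the proposition yields the full equivalence of (i)--(vi).

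The only step that requires genuine care is verifying, in Step 1, that (i) already suffices to pull all of Theorem \ref{uni.t.inter}'s closure properties into play (notably closure under conjugation, which uses Fuglede's theorem); this is already packaged in Theorem \ref{uni.t.inter} so nothing new is required here. The symmetry argument in Step 2 is the conceptual heart of the proof: one must be comfortable swapping the roles of $A$ and $B$ and treating $T_A, S_A, Z_A$ as the ``test'' bounded operators when $B$ is the normal one. Once one notices this, the proof is essentially a bookkeeping exercise, with no further computations beyond invoking the already established first part.
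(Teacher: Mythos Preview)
Your proof is correct and follows essentially the same approach as the paper: you use the intertwining set $E$ together with Theorem \ref{uni.t.inter} and Lemma \ref{uni.l.Cd} for the equivalence of (i)--(iv), and then obtain (v) and (vi) by swapping the roles of $A$ and $B$ and reapplying the first part with $T_A, S_A$ (respectively $Z_A$) as the bounded test operator. The only difference is expository: you spell out the conjunction step for (iv)$\iff$(v) slightly more explicitly than the paper does.
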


\begin{proof}
The set  $E:= E(\Phi_A, \Phi_A; B) = 
\{ f\in \Meas(\K) \suchthat Bf(A) \subseteq f(A)B\}$ 
has the properties 1)--8) of Theorem \ref{uni.t.inter}.
Hence, by Lemma \ref{uni.l.Cd}\, (i)--(iv) are pairwise
equivalent.

Suppose, in addition, that $B$ is normal (self-adjoint if $\K = \R$). 
Then we may  
apply the already established equivalence (i)$\gdw$(iii)
to $B$ in place of $A$ and $Z_A$ in place of $B$. This
yields the equivalence (iii)$\gdw$(vi) as it stands.

Similarly, the already established equivalence (i)$\gdw$(iv)
applied to $B$ in place of $A$ and $S_A, T_A$ in place of
$B$ yields the equivalence (iv)$\gdw$(v) as it stands. 
\end{proof}

As a corollary we obtain that two normal
operators strongly commute in our sense if and only if
they do in the sense of Schmüdgen from \cite{SchmuedgenUOH}.

\begin{cor}
Let $A, B$ be normal operators
on a Hilbert space $H$, and self-adjoint if $\K = \R$. 
Then the following
assertions are equivalent:
\begin{aufzii}
\item $A$ and $B$ are strongly commmuting.
\item $Z_A$ and $Z_B$ commute.
\item $f(A)$ commutes with $g(B)$ whenever
$f,g\in 
\Meas(\K)$ and one of the operators is bounded.
\item $f(A)$ commutes with $g(B)$ whenever
$f,g\in  \BMb(\K)$.

\item The projection-valued measures associated with
$A$ and $B$ commute.
\end{aufzii}
\end{cor}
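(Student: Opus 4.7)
The plan is to use Proposition \ref{spt.p.strongcom} as the engine that converts commutation on the level of the bounded pieces $T_A, S_A, T_B, S_B$ (or equivalently $Z_A, Z_B$) into commutation on the level of arbitrary measurable functional calculi, and to use Theorem \ref{uni.t.inter} to pass from projection-level commutation up to bounded-function-level commutation. The equivalence (i) $\Leftrightarrow$ (ii) is nothing but the equivalence (v) $\Leftrightarrow$ (vi) already established at the end of Proposition \ref{spt.p.strongcom}, so this comes for free.

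First I would dispatch the easy implications. The implication (iii) $\Rightarrow$ (iv) is trivial, and (iv) $\Rightarrow$ (v) follows because characteristic functions are bounded. For (iv) $\Rightarrow$ (i) I would observe that $T_A, S_A, T_B, S_B$ are all of the form $h(A)$ or $h(B)$ for suitable bounded Borel $h$, so (iv) immediately gives commutation of every $A$-side operator with every $B$-side one; commutation within each side is already supplied by Lemma \ref{spt.l.TASA}.c).

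Next I would prove (i) $\Rightarrow$ (iii) by a double application of Proposition \ref{spt.p.strongcom}. From (i) the bounded operators $T_B$ and $S_B$ each commute with $T_A$ and with $S_A$, so the proposition applied to the normal operator $A$ and to the bounded operator $T_B$ (respectively $S_B$) upgrades this to $T_B f(A) \subseteq f(A) T_B$ and $S_B f(A) \subseteq f(A) S_B$ for every $f \in \Meas(\K)$. Now assume $f(A) \in \BL(H)$; then a second application of the proposition, this time to the normal operator $B$ and to the bounded operator $f(A)$, yields $f(A) g(B) \subseteq g(B) f(A)$ for every $g \in \Meas(\K)$. The symmetric case with $g(B)$ bounded is of course analogous.

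The only implication requiring real work is (v) $\Rightarrow$ (iv), and here I would invoke Theorem \ref{uni.t.inter} twice. Fix a Borel set $B_1 \subseteq \K$ and set $T := \car_{B_1}(A) \in \BL(H)$. Assumption (v) says that the intertwining set $E(\Phi_B, \Phi_B; T)$ contains every characteristic function $\car_{B_2}$. By properties 1), 4) and 7) of Theorem \ref{uni.t.inter} this set is a pointwise-sequentially closed unital $*$-subalgebra of $\Meas(\K)$ containing all Borel characteristic functions, hence it must equal $\Meas(\K)$ (alternatively, once $E$ contains the bounded functions $(1+\abs{\bfz}^2)^{-1}$ and $\bfz (1+\abs{\bfz}^2)^{-1}$, Lemma \ref{uni.l.Cd} finishes the job). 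Thus $\car_{B_1}(A)$ commutes with every $g(B)$ for $g \in \Meas(\K)$. Fixing now a bounded $g$ and repeating the same argument with $T' := g(B) \in \BL(H)$ and the calculus $\Phi_A$ in place of $\Phi_B$, the intertwining set $E(\Phi_A, \Phi_A; T')$ contains every $\car_{B_1}$ by what we have just shown and hence equals $\Meas(\K)$, which gives (iv). The only delicate point here is that Theorem \ref{uni.t.inter} insists on a bounded intertwiner, which is precisely why the argument is organised into two stages with the bounded operator sitting on each side in turn.
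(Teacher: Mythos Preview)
Your proof is correct and follows essentially the same route as the paper: the key implication (i)$\Rightarrow$(iii) is handled identically by a double application of Proposition~\ref{spt.p.strongcom}, and the remaining implications are organised into the same easy chains. The only cosmetic difference is that the paper declares (v)$\Leftrightarrow$(iv) to be ``close-to-trivial'' (implicitly via simple-function approximation and bp-continuity), whereas you spell out (v)$\Rightarrow$(iv) explicitly through two invocations of Theorem~\ref{uni.t.inter}; this is perfectly fine and arguably more in the spirit of the paper's machinery.
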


\begin{proof}
We note the trivial or close-to-trivial implications
(iii)$\dann$(v)$\gdw$(iv)$\dann$(ii), (i). 

\prfnoi
(i)$\dann$(iii): 
Suppose that (i) holds and $f(A)$ is bounded. 
Then $T_B$ commutes with $T_A$ and  $S_A$. By Proposition 
\ref{spt.p.strongcom} applied with  $T_B$ instead of $B$,
$T_B$ commutes with each $f(A)$. The same
holds for $S_B$. Hence, if $f(A)$ is bounded,
we can apply Proposition \ref{spt.p.strongcom} again
(now with $B$ replaced by $f(A)$ and $A$ replaced by $B$)
and conclude that $f(A)$ commutes with $g(B)$ whatever $g$ is. This proves (iii).

\prfnoi
(ii)$\dann$(iii): This is similar as before. 
\end{proof}

\begin{rem}
The definition of the bounded transform goes
back to  \cite{Woronowicz1991}.   
Schmüdgen \cite[Chapter 5]{SchmuedgenUOH}  
uses the bounded transform for passing from 
bounded to unbounded normal operators in the proof
of Theorem \ref{spt.t.unb}, cf.{ }also 
\cite{BuddeLandsman2016}. The advantage is
that to cover the case of a  single unbounded operator one
only needs the result for a single bounded operator, 
and this
may be helpful in a teaching context. On the other hand,
a nontrivial concept of functional calculus,
the square root, is needed 
to define the bounded transform in the
first place, whereas one has direct access  
to the operators $T_A$ and $S_A$ used in our approach. 
\end{rem}

\appendix


\def\cprime{$'$} \def\cprime{$'$} \def\cprime{$'$} \def\cprime{$'$}

\medskip

\subsection*{Acknowledgements}

In preliminary form, parts of this work were included in 
 the lecture notes for  the
21st International Internet Seminar on ``Functional Calculus'' 
during the academic year 2017/2018. I am indebted to the participating
students  and  colleagues, in particular to Jan van Neerven (Delft), 
Hendrik Vogt (Bremen) and, above all, to Jürgen Voigt (Dresden) for valuable remarks and
discussions.

This work was completed while I was spending a research
sabbatical at UNSW in Sydney. I am grateful to
Fedor Sukochev for his kind invitation. Moreover, 
I gratefully acknowledge the financial support from the DFG,
 project number 431663331.
\end{document}